\documentclass{amsart}
\usepackage{amsmath}
\usepackage{graphicx}
\usepackage{amsfonts}
\usepackage{amsthm}
\usepackage{centernot}
\usepackage{setspace}
\usepackage{mathtools}
\usepackage{amssymb}
\usepackage{bm}
\usepackage{hyperref}
\usepackage[capitalize]{cleveref}
\usepackage{mathrsfs}
\usepackage{tikz}
\usepackage{tikz-cd}
\usepackage{breqn}
\usepackage[strict]{changepage}
\usepackage{breqn}
\newtheorem{thm}{Theorem}[section]

\newtheorem{prop}[thm]{Proposition}
\newtheorem{lem}[thm]{Lemma}

\theoremstyle{definition}
\newtheorem{defn}[thm]{Definition}

\theoremstyle{remark}

\makeatletter
\let\c@equation\c@thm
\makeatother
\numberwithin{equation}{section}

\newcommand{\ov}[1]{\,\overline{#1}}

\DeclareMathOperator{\aut}{Aut}

\DeclareMathOperator{\im}{im}

\DeclareMathOperator{\Iw}{Iw}
\DeclareMathOperator{\ad}{ad}
\DeclareMathOperator{\Art}{Art}

\DeclareMathOperator{\Ann}{Ann}
\DeclareMathOperator{\Sp}{Sp}

\DeclareMathOperator{\GL}{GL}

\DeclareMathOperator{\Hom}{Hom}

\DeclareMathOperator{\ind}{ind}
\DeclareMathOperator{\res}{Res}

\DeclareMathOperator{\End}{End}

\DeclareMathOperator{\Tr}{Tr}

\DeclareMathOperator{\diag}{diag}

\DeclareMathOperator{\Frob}{Frob}
\DeclareMathOperator{\rec}{rec}

\newcommand{\Z}{\mathbf{Z}}
\newcommand{\Q}{\mathbf{Q}}

\newcommand{\C}{\mathbf{C}}
\newcommand{\D}{\mathcal{D}}
\newcommand{\OO}{\mathcal{O}}

\newcommand{\m}{\mathfrak{m}}
\newcommand{\n}{\mathfrak{n}}
\newcommand{\p}{\mathfrak{p}}

\newcommand{\F}{\mathbf{F}}

\newcommand{\A}{\mathbf{A}}

\newcommand{\V}{\mathcal{V}}
\newcommand{\HH}{\mathcal{H}}
\newcommand{\TT}{\mathbf{T}}
\newcommand{\DD}{\mathbf{D}}
\newcommand{\CC}{\mathcal{C}}
\newcommand{\isom}{\xrightarrow{\sim}}
\newcommand{\Sat}{\mathcal{S}}
\newcommand{\til}{\widetilde}

\title{Automorphy lifting with adequate image}
\author{Konstantin Miagkov, Jack A. Thorne} 
\date{}
\begin{document}

\begin{abstract}
Let $F$ be a CM number field. We generalize existing automorphy lifting theorems for regular residually irreducible $p$-adic Galois representations over $F$ by relaxing the big image assumption on the residual representation.
\end{abstract}

\maketitle

\tableofcontents

\section{Introduction}
This paper closely builds on \cite{10author}, which proves modularity lifting theorems for regular $n$-dimensional Galois representations over a CM number field $F$ without any self-duality condition. In this paper, we generalize the main results of \cite{10author} to relax the big image assumption on the residual representation from `enormous image' to `adequate image'. Following \cite{Thorne12} we define `adequate image':
\begin{defn}
Let $k$ be a finite field of characteristic $p$ such that $p \nmid n$, and let $G \subset \GL_n(k)$ be a subgroup which acts absolutely irreducibly on $V = k^n$. We suppose that $k$ is large enough to contain all eigenvalues of all elements of $G$. If $g \in G$ and $\alpha \in k$ is an eigenvalue $g$, we write $e_{g, \alpha} : V \to V$ for the $g$-equivariant projection to the generalized $\alpha$-eigenspace. We say that $G$ is \textbf{adequate} if the following conditions are satisfied:
\begin{enumerate}
\item $H^0(G, \ad^0 V) = 0$.
\item $H^1(G, k) = 0.$
\item $H^1(G, \ad^0 V) = 0$.
\item For every irreducible $k[G]$-submodule $W \subset \ad^0 V$ there exists an element $g \in G$ with an eigenvalue $\alpha$ such that $\text{tr}~e_{g, \alpha}W \neq 0$.
\end{enumerate}
\end{defn}

Our main theorems are as follows:
\begin{thm}\label{thm:main1}
Let $F$ be an imaginary $CM$ or totally real field, let $c \in \aut(F)$ be complex conjugation, and let $p$ be a prime. Suppose given a continuous representation $\rho : G_F \to \GL_n(\ov{\Q}_p)$ satisfying the following conditions:
\begin{enumerate}
\item $\rho$ is unramified almost everywhere.
\item For each place $v \mid p$ of $F$, the representation $\rho|_{G_{F_v}}$ is crystalline. The prime $p$ is unramified in $F$.
\item $\ov{\rho}$ is absolutely irreducible and decomposed generic. The image of $\ov{\rho}|_{G_{F(\zeta_p)}}$ is adequate.
\item There exists $\sigma \in G_F - G_{F(\zeta_p)}$ such that $\ov{\rho}(\sigma)$ is a scalar. We have $p > n^2$.
\item There exists a cuspidal automorphic representation $\pi$ of $\GL_n(\mathbf{A}_F)$ satisfying the following conditions:
    \begin{enumerate}
    \item $\pi$ is regular algebraic of weight $\lambda$, this weight satisfying \[\lambda_{\tau,1} + \lambda_{\tau c,1} - \lambda_{\tau,n} - \lambda_{\tau c,n} < p-2n\] for all $\tau$.
    \item There exists an isomorphism $\iota : \ov{\Q}_p \to \C$ such that $\ov{\rho} \cong \ov{r_\iota(\pi)}$ and the Hodge-Tate weights of $\rho$ satisfy the formula for each $\tau : F \xhookrightarrow{} \ov{\Q}_p$:
    \[HT_{\tau}(\rho) = \{\lambda_{\iota \tau,1} + n - 1, \lambda_{\iota \tau,2} + n - 2, \ldots, \lambda_{\iota \tau,n}\}.\]
    \item If $v \mid p$ is a place of $F$, then $\pi_v$ is unramified.
    \end{enumerate}
\end{enumerate}
Then $\rho$ is automorphic: there exists a cuspidal automorphic representation $\Pi$ of $\GL_n(\mathbf{A}_F)$ of weight $\lambda$ such that $\rho \cong r_\iota(\Pi)$ Moreover, if $v$ is a finite place of $F$ and either $v \mid p$ or both $\rho$ and $\pi$ are unramified at $v$, then $\Pi_v$ is unramified.
\end{thm}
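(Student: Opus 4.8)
The plan is to deduce Theorem~\ref{thm:main1} from the corresponding automorphy lifting theorem in \cite{10author}, whose residual hypothesis is `enormous image', by a base-change argument that upgrades an adequate residual image to an enormous one. The key observation is that while the image of $\ov\rho|_{G_{F(\zeta_p)}}$ need only be adequate, adequacy is already enough to run the Taylor--Wiles--Calegari--Geraghty patching machinery: the four conditions in the definition of adequacy are precisely what is needed to produce, via \v{C}ebotarev, enough Taylor--Wiles primes to kill the relevant dual Selmer group, and to control the deformation rings and Hecke modules. So the real content is to check that the patching argument of \cite{10author} goes through verbatim with `enormous' replaced by `adequate' throughout, using the cohomological vanishing (1)--(3) and the trace condition (4) in place of the stronger consequences of enormousness.

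**The main steps.**

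First I would recall the global deformation problem attached to $\ov\rho$ as in \cite{10author}: fix a coefficient field, choose the set of ramification places (those where $\rho$ or $\pi$ ramifies, together with $v \mid p$), and impose crystalline (Fontaine--Laffaille, using $p > n^2$ and $p$ unramified in $F$) conditions at $p$ and suitable local conditions elsewhere. Second, I would verify the Taylor--Wiles condition: given the dual Selmer group $H^1_{\mathcal L^\perp}(G_{F,S}, \ad^0\ov\rho(1))$, adequacy condition (4) guarantees that for each nonzero class and each irreducible constituent of $\ad^0\ov\rho$ there is an element $\sigma$ with an eigenvalue $\alpha$ so that the projection $e_{\sigma,\alpha}$ detects it; combined with the existence of $\sigma_0 \in G_F - G_{F(\zeta_p)}$ with $\ov\rho(\sigma_0)$ scalar (hypothesis (4) of the theorem), this lets one choose Taylor--Wiles data of any required level $N$ annihilating the dual Selmer group, exactly as in \cite[\S2]{Thorne12}. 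Third, I would feed this into the patching argument of \cite{10author}: the Calegari--Geraghty method produces a patched module over a patched deformation ring, and conditions (1)--(3) of adequacy ensure the local deformation ring at $p$ is formally smooth of the expected dimension and that $H^0(G_{F(\zeta_p)}, \ad^0\ov\rho) = H^1(G_{F(\zeta_p)}, k) = 0$, so that the numerical coincidence forcing $R = \TT$ holds. Finally, translating $R = \TT$ back via the theory of \cite{10author} relating the patched Hecke algebra to spaces of automorphic forms on $\GL_n(\mathbf A_F)$, and using that $\ov\rho \cong \ov{r_\iota(\pi)}$ provides a nonzero point of $\Spec \TT$, yields a cuspidal $\Pi$ of weight $\lambda$ with $\rho \cong r_\iota(\Pi)$ and the stated local-unramifiedness at places where $\rho$ and $\pi$ are unramified or which divide $p$.

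**The main obstacle.**

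The hard part will be making sure that \emph{every} place in the argument of \cite{10author} where `enormous' is invoked is actually only using a consequence that already follows from `adequate'. In \cite{10author} enormousness is used both for the Taylor--Wiles argument and, more subtly, in controlling local-global compatibility and in the ``Ihara avoidance'' / level-changing steps; one must check, for instance, that the relevant local deformation rings at Taylor--Wiles primes $v$ (where $\ov\rho(\Frob_v)$ has distinct eigenvalues, arranged using scalarness of $\ov\rho(\sigma_0)$ and condition (4)) still have the required structure, and that the auxiliary prime arguments still produce a suitable soluble CM extension over which things become as needed. A second, more technical point is the interaction of the scalar element $\sigma_0$ with the decomposed-generic hypothesis: one needs the combined input to still allow the ``allowing ramification'' step that changes level at an auxiliary prime without destroying the chosen local conditions. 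Once these compatibility checks are done, no genuinely new idea is required --- the theorem follows by the same patching architecture as \cite{10author}, with \cite{Thorne12} supplying the adequate-image version of the \v{C}ebotarev input.
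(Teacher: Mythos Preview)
Your proposal has a genuine gap. You assert that at Taylor--Wiles primes $v$ one can arrange $\ov\rho(\Frob_v)$ to have \emph{distinct} eigenvalues, and that the patching of \cite{10author} then goes through verbatim. But this is exactly what adequacy does \emph{not} give you: condition~(4) in the definition only furnishes, for each irreducible $W\subset\ad^0\ov\rho$, an element $g$ with some eigenvalue $\alpha$ such that $\Tr e_{g,\alpha}W\neq 0$. There is no reason $g$ is regular semisimple; indeed, the whole point of the enormous condition in \cite{10author} was to guarantee a regular semisimple element, and relaxing to adequate means giving this up. Consequently, the Taylor--Wiles primes you produce via \v{C}ebotarev will in general have $\ov\rho(\Frob_v)$ with an eigenvalue $\alpha_v$ of multiplicity $d_v>1$, and the Iwahori-level argument of \cite{10author} does not apply.

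The paper's approach is therefore not to check that \cite{10author} goes through unchanged, but to replace Iwahori level at Taylor--Wiles primes by the two-block \emph{parahoric} $\p_{\nu_v}$ attached to the partition $n=d_v+(n-d_v)$, following the idea of \cite{Thorne12}. This forces substantial new work that your outline omits: (i) an analysis of parahoric-fixed vectors and the projector $e_{\alpha_v}$ for $\GL_n(F_v)$-representations over $\ov\F_p$, culminating in the isomorphism $H^k(\GL_n(\OO_{F_v}),V)\cong e_{\alpha_v}H^k(\p_{\nu_v},V)$ (Theorem~\ref{thm:c_iso}), which is what makes the ``growth of cusp forms'' step (Theorem~\ref{thm:cusp_growth}) go; (ii) characteristic-zero computations at parahoric level needed for local--global compatibility, handled here without the genericity assumption available in \cite{Thorne12} because the representations arise from $\til G$ rather than $\GL_n$; and (iii) construction of the Galois representation into the parahoric-level Hecke algebra with the correct $\OO[\Delta_Q]$-structure (Theorem~\ref{thm:main_g}). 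Once these are in place, the reduction of Theorem~\ref{thm:main1} to the analogue of \cite[Theorem~6.5.4]{10author} proceeds as in \cite{10author}, and that analogue (Theorem~\ref{thm:nearly_faithful}) is proved by feeding the new ingredients into the patching machinery of \cite[\S6.4]{10author}. Your high-level sketch of the \v{C}ebotarev and patching steps is fine as far as it goes, but the missing parahoric-level input is the actual content of the argument.
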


\begin{thm}\label{thm:main2}
Let $F$ be an imaginary $CM$ or totally real field, let $c \in \aut(F)$ be complex conjugation, and let $p$ be a prime. Suppose given a continuous representation $\rho : G_F \to \GL_n(\ov{\Q}_p)$ satisfying the following conditions:
\begin{enumerate}
\item $\rho$ is unramified almost everywhere.
\item Let $\Z_+^n = \{(\lambda_1, \ldots, \lambda_n) \in \Z^n \mid \lambda_1 \geq \ldots \geq \lambda_n\}$. For each place $v \mid p$ of $F$, the representation $\rho|_{G_{F_v}}$ is potentially semistable, ordinary with regular Hodge-Tate weights. In other words, there exists a weight $\lambda \in (\Z_+^n)^{\Hom(F, \ov{\Q}_p)}$ such that for each place $v \mid p$, there is an isomorphism 
\[\rho|_{G_{F_v}} \sim 
\begin{pmatrix}
    \psi_{v,1} & * & * & * \\
    0 & \psi_{v,2} & * & * \\
    \vdots & \ddots & \ddots & * \\
    0 & \dots & 0 & \psi_{v,n}
\end{pmatrix},
\]
where for each $i = 1,\ldots,n$ the character $\psi_{v,i} : G_{F_v} \to \ov{\Q}_p^\times$ agrees with the character 
\[\sigma \in I_{F_v} \mapsto \prod_{\tau \in \Hom(F_v, \ov{\Q}_p)} \tau(\Art_{F_v}^{-1}(\sigma))^{-(\lambda_{\tau, n-i+1} +i-1)}\]
on an open subgroup of the inertia group $I_{F_v}$.
\item $\ov{\rho}$ is absolutely irreducible and decomposed generic. The image of $\ov{\rho}|_{G_{F(\zeta_p)}}$ is adequate.
\item There exists $\sigma \in G_F - G_F(\zeta_p)$ such that $\ov{\rho}(\sigma)$ is a scalar. We have $p > n$.
\item There exists a cuspidal automorphic representation $\pi$ of $\GL_n(\mathbf{A}_F)$ and an isomorphism $\iota : \ov{\Q}_p \to \C$ such that $\pi$ is $\iota$-ordinary and $\ov{\rho} \cong \ov{r_\iota(\pi)}$.
\end{enumerate}
Then $\rho$ is ordinarily automorphic of weight $\iota\lambda$: there exists a $\iota$-ordinary cuspidal automorphic representation $\Pi$ of $\GL_n(\mathbf{A}_F)$ of weight $\iota\lambda$ such that $\rho \cong r_\iota(\Pi)$ Moreover, if $v \nmid p$ is a finite place of $F$ and both $\rho$ and $\pi$ are unramified at $v$, then $\Pi_v$ is unramified.
\end{thm}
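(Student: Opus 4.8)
\emph{Strategy.} The plan is to run the proof of the corresponding ordinary automorphy lifting theorem in \cite{10author} essentially verbatim, the one structural change being that the Taylor--Wiles primes are produced from the adequacy of $\ov{\rho}|_{G_{F(\zeta_p)}}$ and the scalar element $\sigma$, in the style of \cite{Thorne12}, rather than from an enormous-image hypothesis. First I would carry out the standard reductions: twisting and solvable base change preserve all of the hypotheses (absolute irreducibility, decomposed generic, adequacy of $\ov\rho|_{G_{F(\zeta_p)}}$, the existence of $\sigma$ with scalar image, ordinarity at $p$ and $\iota$-ordinarity of $\pi$), so we may assume $F$ is imaginary CM containing an imaginary quadratic field in which $p$ splits, that the coefficient field is large, and that the ramified-away-from-$p$ behaviour of $\rho$ and $\pi$ is arranged so as to permit an Ihara-avoidance argument at an auxiliary prime $v_1$ in place of any minimality condition.

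\emph{Deformation theory and Taylor--Wiles primes.} Next I would set up the global ordinary deformation problem $\mathcal{S}$: fixed determinant, the ordinary local condition of weight $\lambda$ at each $v\mid p$ (using the local lifting ring of \cite{10author}, which is $\OO$-flat of the expected dimension with regular generic fibre, $p$ being large relative to $n$), the Ihara-avoidance pair of conditions at the bad places, and unramified elsewhere. The crucial step is the construction, for every $N$, of a set $Q_N$ of Taylor--Wiles primes of cardinality $q$ independent of $N$, with $q_v\equiv 1 \pmod{p^N}$ and $\ov\rho(\Frob_v)$ possessing a simple eigenvalue $\alpha_v$ for $v\in Q_N$, such that the associated dual Selmer group vanishes and $R_{\mathcal{S}_{Q_N}}$ is topologically generated over the local lifting ring by a bounded number $g$ of elements. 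This is exactly where adequacy enters: conditions (1)--(3) of the definition give the vanishing of $H^0$ and $H^1$ of $\ad^0\ov\rho$ and of $\F_p$ over $F(\zeta_p)$, while condition (4) --- for every irreducible submodule $W\subset\ad^0 V$ there is $g$ with eigenvalue $\alpha$ and $\Tr(e_{g,\alpha}W)\neq 0$ --- is precisely what forces a given dual Selmer class to become non-zero after restriction to a suitably chosen Chebotarev prime, and the scalar element $\sigma$ allows that prime to be chosen nontrivial in $\mathrm{Gal}(F(\zeta_p)/F)$ as well; this reproduces, in the present global setting, the Taylor--Wiles mechanism of \cite{Thorne12}.

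\emph{Patching and descent.} I would then import the automorphic side from \cite{10author}: the $\iota$-ordinary (Hida-theoretic) part of the completed cohomology of the $\GL_n/F$-arithmetic manifolds, localized at the maximal ideal $\m$ of the ordinary Hecke algebra attached to $\ov\rho$, a complex supported in the Calegari--Geraghty degree range $[q_0,q_0+\ell_0]$ carrying an action of $R_{\mathcal{S}_{Q_N}}$ via the Galois representations of $\iota$-ordinary automorphic forms, with local-global compatibility at $p$, at the Taylor--Wiles primes and at $v_1$ (where the decomposed generic hypothesis guarantees concentration and torsion-vanishing in the expected degrees). Ultrafilter patching over $N$ yields a perfect complex of modules over $R_\infty = R^{\mathrm{loc}}[[x_1,\dots,x_g]]$; the numerical coincidence $\dim R_\infty = \dim R^{\mathrm{loc}} + g$ together with the Calegari--Geraghty relation among $\ell_0$, $q_0$ and the patching variables, and the Ihara-avoidance comparison of the two auxiliary conditions, forces the patched homology to be a faithful maximal Cohen--Macaulay module over the relevant quotient of $R_\infty$, concentrated in a single degree. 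Hence $\rho$, viewed as a point of $\Spec R_{\mathcal{S}}$, lies in the support of the ordinary Hecke algebra, so $\rho$ is ordinarily automorphic over the base-changed field; descending along the solvable extension by the base-change results of \cite{10author} then produces the desired $\iota$-ordinary $\Pi$ over $F$ of weight $\iota\lambda$, unramified at the asserted places.

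\emph{Main obstacle.} The real work is the Taylor--Wiles prime construction under only adequate image: checking that axioms (1)--(4) of adequacy together with the scalar element $\sigma$ supply exactly the Galois-cohomological and Chebotarev input that enormous image provided in \cite{10author}, and that this input still bounds the number of generators of $R_{\mathcal{S}_{Q_N}}$ uniformly in $N$ so that patching goes through. A secondary point is to confirm that the ordinary local deformation rings at $p$ and the Ihara-avoidance local rings retain the flatness, dimension and generic regularity needed for the concluding depth computation at this level of generality.
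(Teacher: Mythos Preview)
Your overall reduction strategy---solvable base change, setting up the ordinary deformation problem, patching, descent---is the right one, and these steps are indeed those of \cite{10author} and carry through unchanged under adequate image. But there is a genuine gap at the point where you construct Taylor--Wiles primes and then ``import the automorphic side from \cite{10author}.''

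You write that the primes $v \in Q_N$ can be chosen with $\ov{\rho}(\Frob_v)$ possessing a \emph{simple} eigenvalue $\alpha_v$. Under enormous image this is built into the definition, but adequacy is strictly weaker: condition~(4) only guarantees, for each irreducible $W \subset \ad^0 V$, an element $g$ and an eigenvalue $\alpha$ with $\Tr(e_{g,\alpha}W)\neq 0$, and $\alpha$ may have multiplicity $d_v > 1$. This is precisely the gap between enormous and adequate (compare \cref{prop:killing_selmer}, where no simplicity is asserted). Consequently the level structure at Taylor--Wiles primes must be \emph{parahoric} with blocks of sizes $d_v$ and $n-d_v$, not Iwahori, and the automorphic input of \cite{10author}---which is built for Iwahori level at Taylor--Wiles primes---does not apply directly.

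Filling this gap is the main technical content of the paper, and your proposal omits it entirely. Two new ingredients are needed. First, a parahoric-level ``growth of cusp forms'' statement, namely that $R\Gamma(X_{K_0(Q)}, V_\lambda)_{\m_Q} \to R\Gamma(X_K, V_\lambda)_\m$ is an isomorphism (\cref{thm:cusp_growth}); this requires the mod-$p$ representation-theoretic analysis of \cref{sec:reps}, in particular \cref{thm:c_iso}. Second, local--global compatibility at parahoric level at the Taylor--Wiles primes, showing that the Galois representation into the Hecke algebra is of type $\mathcal{D}_v^1$ (\cref{thm:main_g}); this requires the characteristic-zero analysis of \cref{sec:local}, and is harder than the corresponding step in \cite{Thorne12} because one can no longer restrict to generic local representations. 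Once these are established, the patching argument of \cite[\S6.4--6.5]{10author} runs verbatim; the paper's proof of \cref{thm:main2} (and \cref{thm:main1}) is then the observation that the reduction in \cite{10author} from the main theorems to the ``nearly faithful'' statement never invokes enormous image, combined with the adequate-image analogue of that statement supplied by \cref{prop:killing_selmer}, \cref{thm:cusp_growth}, and \cref{thm:main_g}.
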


The theorems above are very similar to \cite[Theorem~6.1.1]{10author} and \cite[Theorem~6.1.2]{10author} respectively. The only difference is replacing the \textit{enormous} condition on image of $\ov{\rho}|_{G_{F(\zeta_p)}}$ with \textit{adequate}. This is a useful improvement, particularly in light of \cite{adequate} which proves that when $p > 2(n+1)$ adequacy is equivalent to absolute irreducibility. This makes it a condition easy to work with in the context of automorphy of compatible systems, which we hope would help generalize \cite{BLGGT} to the context of \cite{10author} and this paper. 

We now give a brief overview of the argument. The main change in comparison to \cite{10author} is the usage of parahoric-level subgroups at Taylor-Wiles primes instead of Iwahori-level, the idea first introduced to relax the big image assumption in the setting of automorphy lifting theorems to `adequate' in \cite{Thorne12}. To make the argument work in the parahoric setting, we need to analyze the representations of $\GL_n(F_v)$ with fixed vectors under various parahoric subgroups and their interactions with the local Langlands correspondence. A notable difficulty in comparison to \cite{Thorne12} is that we can no longer restrict to working with generic local representations, since they arise as components of cuspidal automorphic representations of unitary groups instead of $\GL_n$. The local computations allow us to prove the necessary local-global compatibility results for Galois representations landing in Hecke algebras acting on cohomology of locally symmetric spaces with parahoric level. Another novel component is a proof of a "growth of the space of cusp forms"-type result when adding Taylor-Wiles primes with parahoric level, which requires an investigation of representations of $\GL_n(F_v)$ over fields of finite characteristic. 

\subsection{Acknowledgments}

K.M. would like to thank his advisor Richard Taylor for many helpful conversations. J.T.'s work received funding from the European Research Council (ERC) under the European Union's Horizon 2020 research and innovation programme (grant agreement No 714405).

\subsection{Notation}

We write $GL_n$ for the usual general linear group (viewed as a reductive group scheme over $Z$) and $T_n \subset  B_n \subset GL_n$ for its subgroups of diagonal and of upper triangular matrices, respectively. We identify $X^*(T)$ with $\Z^n$ in the usual way, and write $\Z^n_+ \subset \Z^n$ for the subset of $B_n$-dominant weights. 
If $R$ is a local ring, we write $\m_R$ for the maximal ideal of $R$. If $\Gamma$ is a profinite group and $\rho : \Gamma \to GL_n(\ov{\Q}_p)$ is a continuous homomorphism, then we will let $\ov{\rho} : \Gamma \to GL_n(\ov{\F}_p)$ denote the \textit{semi-simplification} of its reduction, which is well defined up to conjugacy (by the Brauer-Nesbitt theorem). If $M$ is a topological abelian group with a continuous action of $\Gamma$, then by $H^i(\Gamma, M)$ we shall mean the continuous cohomology.

If $G$ is a locally profinite group, $U \subset G$ is an open compact subgroup and $R$ is a commutative ring, then we write $\HH_R(U,G)$ for the algebra of compactly supported, $U$-biinvariant functions $f : G \to R$, with multiplication given by convolution with respect to the Haar measure on $G$ which gives $U$ volume 1. If $X \subset G$ is a compact $U$-biinvariant subset, then we write $[X]$ for the characteristic function of $X$, an element of $\HH_R(U, G)$. When $R$ is omitted from the notation we take $R = \Z$. We write $\iota_{\HH}$ for the anti-involution given by $\iota_{\HH}(f)(g) = f(g^{-1})$.

If $F$ is a perfect field, we let $\ov{F}$ denote an algebraic closure of $F$ and $G_F$ the absolute Galois group $\text{Gal}(\ov{F}/F)$. We will use $\zeta_n$ to denote a primitive $n$-th root of unity when it exists. Let $\epsilon_l$ denote the $l$-adic cyclotomic character. 

We will let $\rec_K$ be the local Langlands correspondence of \cite{HT01}, so that if $\pi$ is an irreducible complex admissible representation of $GL_n(K)$, then $\rec_K(\pi)$ is a Frobenius semisimple Weil-Deligne representation of the Weil group $W_K$. 
If $K$ is a finite extension of $\Q_p$ for some $p$, we write $K^{nr}$ for its maximal
unramified extension; $I_K$ for the inertia subgroup of $G_K$ ; $\Frob_K \in G_K / I_K$ for
the geometric Frobenius; and $W_K$ for the Weil group. We will write $\text{Art}_K : K^\times \isom W_K^{\text{ab}}$ for the Artin map normalized to send uniformizers to geometric Frobenius elements.

We will write $\rec$ for $\rec_K$ when the choice of $K$ is clear. We write $\rec^T_K$ for the normalization of the local Langlands correspondence as defined in e.g. \cite[\S 2.1]{CT14}; it is defined on irreducible admissible representations of $GL_n(K)$ defined over any field which is abstractly isomorphic to $\C$ (e.g. $\ov{\Q}_l$). If $(r,N)$ is a Weil-Deligne representation of $W_K$, we will write $(r,N)^{F-ss}$ for its Frobenius semisimplification. If $\rho$ is a continuous representation of $G_K$ over $\ov{\Q}_l$ with $l \neq p$ then we will write $WD(\rho)$ for the corresponding Weil-Deligne representation of $W_K$. By a Steinberg representation of $GL_n(K)$ we will mean a representation $Sp_n(\psi)$ (in the notation of section 1.3 of \cite{HT01}) where $\psi$ is an unramified character of $K^\times$.

If $G$ is a reductive group over $K$ and $P$ is a parabolic subgroup with
unipotent radical $N$ and Levi component $L$, and if $\pi$ is a smooth representation
of $L(K)$, then we define $\text{Ind}_{P(K)}^{G(K)}\pi$ to be the set of locally constant functions 
$f : G(K) \to \pi$ such that $f(hg) = \pi(hN(K))f(g)$ for all $h \in P(K)$ and $g \in G(K)$. It is a smooth representation of $G(K)$ where $(g_1f)(g_2) = f(g_2g_1)$. This is sometimes referred to as `un-normalized' induction. We let $\delta_P$ denote the determinant of the action of $L$ on $Lie_N$. Then we define the
`normalized' induction $\ind_{P(K)}^{G(K)}\pi$ to be $\text{Ind}_{P(K)}^{G(K)}(\pi \otimes \lvert \delta_P \rvert_K^{1/2})$. 

If $F$ is a CM number field and $\pi$ is an automorphic representation of $\GL_n(\A_F)$, we say that $\pi$ is regular algebraic if $\pi_\infty$ has the same infinitesimal character as an irreducible algebraic representation $W$ of $(\res_{F/\Q}\GL_n)_\C$. If $W^\vee$ has highest weight $\lambda \in (\Z^n_+)^{\Hom(F, \C)}$, then we say $\pi$ has weight $\lambda$. 

If $P(X) \in A[X]$ is a polynomial of degree $n$ over any ring $A$ such that $P(0) \in A^\times$, we write $P^\vee(X)$ for $P(0)^{-1}X^nP(X^{-1})$. For two polynomials $P, Q \in A[X]$ we write $\res(P, Q)$ to denote their resultant.

Given a Galois representation $\rho : G_{F, S} \to \GL_n(A)$ we will write $\rho^\perp \coloneqq \rho^{c, \vee} \otimes \epsilon^{1-2n}$, and given a $G_{F, S}$-group determinant $D$ we will denote by $D^\perp$ the corresponding dual. 
\section{Representation theory of $\GL_n(F_v)$ in characteristic $p$}\label{sec:reps}

Let $p$ be a rational prime, and $k$ be a finite field of characteristic $p$. Let $F/\Q$ be a finite extension, and let $x$ be
a prime in $F$ with residue field $k_x$ of order $q$ satisfying $q \equiv 1 \pmod p$ and the corresponding ring of integers $\OO_x = \OO_{F_x}$. Set $G_x = Gal(\ov{F}_x/F_x)$. Also set $G = GL_n$ with $p > n$, and let $T \subset B \subset G$ be the maximal torus and the corresponding Borel, and $U \subset G$ be the unipotent subgroup. Let $K^1(x) \subset G(\OO_x)$ be the full congruence subgroup. We also let $\Iw, \Iw^1(x)  \subset G(\OO_x)$ be the Iwahori and the Iwahori-1 respectively,
and set $\Iw_1 \subset \Iw^p \subset \Iw$ be the subgroup such that $[\Iw^p : \Iw_1]$ has order prime to $p$ and 
$[\Iw : \Iw^p]$ has $p$-power order. Let $\p(x)$ be a two-block parahoric subgroup of $G(\OO_x)$ with blocks of sizes $n_1+n_2=n$, and $P$ the corresponding parabolic. 
Let $W \cong S_n$ be the Weyl group for $\GL_n$, and for a given parabolic subgroup $Q \subset G$ let $W_Q \subset W$ be the Weyl group of its Levi factor. Set $T_0 \coloneqq T(\OO_x)$ and $T_1 \coloneqq \ker(T_0 \to T(\OO_x/\varpi)))$.

Fix $\ov{\rho} : G_x \to \GL_n(k)$ -- a continuous unramified semisimple representation. We say that an irreducible admissible representation $\pi$ of $G$ over $\ov{\F}_p$ is associated to $\ov{\rho}$ if $\pi$ is a subquotient of 
$\ind_B^G \chi_1 \otimes \ldots \otimes \chi_n$ where $\chi_i$ are unramified characters such that $\{\chi_1(\varpi), \ldots, \chi_n(\varpi)\}$
is the set of eigenvalues of $\ov{\rho}(\Frob_x)$. We write $I(\chi)$ for $\ind_B^G \chi_1 \otimes \ldots \otimes \chi_n$. The following lemma shows that if we do not fix the ordering of $\chi_i$, then we can always consider $\pi$ to be a subrepresentation of $I(\chi)$.

\begin{prop}\label{prop:embeds}
    Let $\pi$ be an irreducible admissible $\ov{\F}_p[G]$-module associated to $\ov{\rho}$. Then there exists an ordering of $\chi_1, \ldots, \chi_n$ such that $\pi$ is a subrepresentation of $I(\chi)$.
\end{prop}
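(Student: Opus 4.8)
The plan is to realize $\pi$ as a genuine subrepresentation of a representation parabolically induced from a cuspidal representation of a Levi subgroup, and then to show that the only Levi that can occur is the diagonal torus $T$. The structural point underlying everything is that the hypothesis $q \equiv 1 \pmod p$ forces the residue characteristic of $F_x$ to be a prime $\ell \neq p$, since no power of $p$ is congruent to $1$ modulo $p$. Thus the smooth representation theory of $G = \GL_n(F_x)$ with coefficients in $\ov{\F}_p$ lies within Vign\'eras' theory of modular representations in characteristic different from the residue characteristic: parabolic induction and the Jacquet functors are exact, the Jacquet functor $r_Q$ attached to a parabolic $Q$ is left adjoint to the normalized parabolic induction $\ind_Q^G$, the geometric lemma holds, and every irreducible admissible representation embeds into $\ind_Q^G \sigma$ for some parabolic $Q = MN$ and some cuspidal representation $\sigma$ of $M$ (Jacquet's subrepresentation theorem). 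Moreover, since $q \equiv 1 \pmod p$ the modulus characters $\delta_Q$ and the normalized absolute value on $F_x^\times$ become trivial modulo $p$, so that normalized and un-normalized induction coincide -- in particular $I(\chi) = \ind_B^G(\chi_1 \otimes \cdots \otimes \chi_n) = \mathrm{Ind}_B^G(\chi_1 \otimes \cdots \otimes \chi_n)$ -- and, for any parabolic $Q \supseteq B$ with Levi $M$, the geometric lemma exhibits a filtration of $r_Q I(\chi)$ whose graded pieces are the representations $\ind_{B \cap M}^M$ of the various orderings of $\chi_1 \otimes \cdots \otimes \chi_n$; taking $Q = B$ this says that $r_B I(\chi)$ has Jordan--H\"older constituents exactly the characters ${}^{w}\chi$, $w \in W$.

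We may assume $n \ge 2$, as the case $n = 1$ (where $B = G$) is vacuous. Apply Jacquet's subrepresentation theorem to $\pi$: there is a parabolic $Q = MN$ and a cuspidal $\ov{\F}_p$-representation $\sigma$ of $M$ with $\pi \hookrightarrow \ind_Q^G \sigma$. By Frobenius reciprocity $\Hom_M(r_Q \pi, \sigma) = \Hom_G(\pi, \ind_Q^G \sigma) \ne 0$, so $\sigma$ is a quotient, hence a Jordan--H\"older constituent, of $r_Q \pi$; and by exactness $r_Q \pi$ is a subquotient of $r_Q I(\chi)$, whose constituents -- by the geometric lemma and the previous paragraph -- are constituents of principal series of $M$ with cuspidal support an ordering of the unramified characters $\chi_1, \ldots, \chi_n$. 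Writing $M = \GL_{m_1}(F_x) \times \cdots \times \GL_{m_r}(F_x)$, it follows that each tensor factor $\sigma^{(i)}$ of $\sigma$ is a cuspidal $\ov{\F}_p$-representation of $\GL_{m_i}(F_x)$ occurring in an unramified principal series. When $q \equiv 1 \pmod p$, however, the classification of modular representations of $\GL_m$ over a $p$-adic field shows that for $m \ge 2$ no such representation exists: the only candidates, the twists of the modular Steinberg representation of $\GL_m(F_x)$, fail to be cuspidal when $q \equiv 1 \pmod p$, while a supercuspidal representation of $\GL_m(F_x)$ with $m \ge 2$ is its own supercuspidal support and in particular does not have cuspidal support on the torus. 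Hence $m_i = 1$ for all $i$, i.e.\ $M = T$ and $Q$ is a Borel subgroup of $G$. Conjugating by an element of $G$ carrying $Q$ to $B$, we obtain $w \in W$ and an embedding $\pi \hookrightarrow \ind_B^G ({}^w\chi) = I(\chi')$, with $\chi'$ the ordering $\chi_{w(1)} \otimes \cdots \otimes \chi_{w(n)}$. This is the assertion of the proposition.

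The one step that draws on more than the formal calculus of Jacquet functors is the input that, for $q \equiv 1 \pmod p$ and $2 \le m \le n$, no cuspidal $\ov{\F}_p$-representation of $\GL_m(F_x)$ occurs in a principal series -- equivalently, that every irreducible subquotient of $I(\chi)$ has non-zero Jacquet module $r_B$. This is the point at which the positive characteristic of the coefficient field, together with the ``non-banal'' congruence $q \equiv 1 \pmod p$, genuinely intervenes, and it is here that one must invoke the classification of modular representations of $\GL_n$ over a $p$-adic field (Vign\'eras; M\'inguez--S\'echerre); I expect this to be the main obstacle, in the sense that it is the only ingredient not available by soft arguments. Granting only the weaker assertion $r_B \pi \ne 0$, the proposition also follows immediately: $r_B \pi$ is then a non-zero finite-length representation of $T$ all of whose Jordan--H\"older constituents lie in $\{ {}^w\chi : w \in W \}$, so it admits an irreducible quotient $\chi' = {}^w\chi$, and Frobenius reciprocity yields a non-zero $G$-homomorphism $\pi \to \ind_B^G \chi' = I(\chi')$, which is injective since $\pi$ is irreducible. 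Over $\C$ (or $\ov{\Q}_p$), where every irreducible constituent of a principal series automatically has cuspidal support on the torus and hence non-zero Jacquet module with respect to $B$, all of this is classical.
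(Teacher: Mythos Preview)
Your proposal is correct, and your alternative argument in the last paragraph is exactly the paper's proof: the paper simply asserts $r_B^G(\pi)\neq 0$ (``since $\pi$ is associated to $\ov\rho$''), takes an irreducible quotient character $\chi'$ of $r_B^G(\pi)$, applies Frobenius reciprocity to get $\pi\hookrightarrow I(\chi')$, and then notes that $\chi'$ is the supercuspidal support of $\pi$ and hence a reordering of the original $\chi_i$. The paper closes with the remark that in the regime $q\equiv 1\pmod p$, $p>n$ the notions of cuspidal and supercuspidal coincide (\cite[II.3.9]{VigBook}), which is precisely the input you isolate as the ``main obstacle''.

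Your longer first argument via Jacquet's subrepresentation theorem is a correct and more explicit unpacking of the same step: you embed $\pi$ in $\ind_Q^G\sigma$ with $\sigma$ cuspidal on a Levi $M$, observe by the geometric lemma that $\sigma$ has supercuspidal support on the torus, and then use cuspidal $=$ supercuspidal to force $M=T$. This buys you a cleaner explanation of \emph{why} $r_B^G(\pi)\neq 0$, which the paper leaves implicit. One small point: you phrase the key modular input in terms of the modular Steinberg failing to be cuspidal and invoke the general classification; the paper's citation (\cite[II.3.9]{VigBook}) packages this directly as ``cuspidal $=$ supercuspidal'' under the standing hypotheses $q\equiv 1\pmod p$ and $p>n$, so you should make sure to also invoke $p>n$ (a standing assumption in the section) rather than $q\equiv 1\pmod p$ alone.
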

\begin{proof}
    We use the adjunction between $\ind_B^G$ and the parabolic restriction $r_B^G$ to get isomorphism 
    \[\Hom(\pi, I(\chi)) \cong \Hom(r_G^B(\pi), \chi).\] Since $\pi$ is associated to $\ov{\rho}$ we know that $r_G^B(\pi) \neq 0$. Since $r_G^B(\pi)$ is a representation of the torus, there exists a $1$-dimensional quotient given by some character $\chi : T \to \ov{\F}_p^\times$. Then we get that $\Hom(\pi, I(\chi)) \neq 0$, and since $\pi$ is irreducible this implies that $\pi$ is a subrepresentation of $I(\chi)$. Then $\chi$ forms the supercuspidal support of $\pi$, and in fact has to be a permutation of the original $\chi_1, \ldots, \chi_n$. For the notion of supercuspidal support in positive characteristic, see \cite[II.2.6]{VigBook}. We would also like to remark here that in the case $q \equiv 1 \pmod p$, $p > n$ the notions of cuspidal and supercuspidal representations coincide, see \cite[II.3.9]{VigBook}.
\end{proof}

We now describe the Bernstein presentation of Iwahori-Hecke algebra $\HH_k(G, \Iw)$, 
following \cite[I.3.14]{VigBook}. Let \[t_j = \diag(\underbrace{\varpi, \ldots \varpi}_{j}, 1 \ldots, 1).\] and set 
$T_j =  [\Iw t_j \Iw]$ and $X^j = T_j(T_{j-1})^{-1}$. We also let $s_j$ be the permutation matrix corresponding to the transposition
$(j, j+1)$, and set $S^j = [\Iw s_j \Iw]$. The elements $X^j$ for $1 \leq j \leq n$ generate the group algebra $k[\Z^n]$ one which 
$S_j$ act by permuting the indices. The Bernstein presentation states that \[\HH_k(G, \Iw) \cong k[S_n \ltimes \Z^n]\] under the action
described above.

Now we introduce some useful Hecke operators. For any ring $R$, $1 \leq i \leq n_1$ and $1 \leq j \leq n_2$ let $V^{j,2} \in \HH_R(G, \p(x))$ be the Hecke operator associated to the double coset 
\[ [ \p(x)\diag(\underbrace{1, \ldots, 1}_{n_1},\underbrace{\varpi, \ldots, \varpi}_{j},\underbrace{1, \ldots, 1}_{n_2-j})\p(x) ]\] and let $V^{i,1}$ be associated to 
\[[\p(x)\diag(\underbrace{\varpi, \ldots, \varpi}_{i}, 1 \ldots, 1)\p(x)]\]

The following is part of \cite[Theorem~B.1]{CHT}
\begin{prop}\label{prop:Iw1_same_Iw}
    Let $V$ be an irreducible admissible $k[G]$-module, which is generated by its Iwahori-invariant vectors. 
    Then $V^{\Iw} = V^{\Iw_1}$.
\end{prop}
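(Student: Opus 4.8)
The plan is to reduce to the case where $V$ is a subrepresentation of an unramified principal series and then to compute $\Iw_1$-invariants one Bruhat cell at a time.

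Since $V$ is irreducible and generated by $V^{\Iw}$, certainly $V^{\Iw}\neq 0$; by the theory of Iwahori-fixed vectors in characteristic $p$ (cf.\ \cite{VigBook} and the discussion before \cref{prop:embeds}), $V$ is then associated to the unramified semisimple representation $G_x\to\GL_n(k)$ whose Frobenius eigenvalues record the supercuspidal support of $V$. By \cref{prop:embeds} there is an unramified character $\chi=(\chi_1,\dots,\chi_n)$ and an embedding $V\hookrightarrow I(\chi)$. Fixing one, we have $V^{\Iw}=V\cap I(\chi)^{\Iw}$ and $V^{\Iw_1}=V\cap I(\chi)^{\Iw_1}$; since $V^{\Iw}\subseteq V^{\Iw_1}$ trivially, it suffices to prove the stronger statement that $I(\chi)^{\Iw}=I(\chi)^{\Iw_1}$, for then $V^{\Iw_1}\subseteq V\cap I(\chi)^{\Iw}=V^{\Iw}$.

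To prove $I(\chi)^{\Iw}=I(\chi)^{\Iw_1}$ I would argue as follows. As $q\equiv 1\pmod p$, the character $|\delta_B|_{F_x}$ reduces to the trivial character, so $I(\chi)=\mathrm{Ind}_{B(F_x)}^{G(F_x)}\chi$ is the unnormalized induction. Using the Bruhat--Iwasawa decomposition $G(F_x)=\coprod_{w\in W}B(F_x)\,w\,\Iw$, the right $\Iw$-module $I(\chi)$ decomposes as the direct sum over $w\in W$ of the subspaces $I(\chi)_w$ of functions supported on the single double coset $B(F_x)w\Iw$. For each $w$, restriction to $w\Iw$ identifies $I(\chi)_w^{\Iw_1}$ with the space of functions $\phi\colon\Iw\to k$ that are right $\Iw_1$-invariant and satisfy $\phi(hu)=\chi(whw^{-1})\phi(u)$ for $h\in H_w:=w^{-1}B(F_x)w\cap\Iw$. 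Now two things happen. First, since $w$ normalizes $T(F_x)$ we have $H_w\supseteq T(F_x)\cap\Iw=T_0$, and because $\Iw_1$ contains the pro-$p$ radical of $\Iw$ (the preimage of the unipotent radical of $B(k_x)$) we have $\Iw=T_0\Iw_1$; hence $\Iw=H_w\Iw_1$, and $H_w\backslash\Iw/\Iw_1$ is a single point. Second, for $h\in H_w\subseteq\Iw\subseteq G(\OO_x)$ the element $whw^{-1}$ lies in $B(F_x)\cap G(\OO_x)=B(\OO_x)$, on which the unramified character $\chi$ is trivial, so the twist $h\mapsto\chi(whw^{-1})$ is trivial on $H_w$. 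Together these show that $\phi$ is determined by its value at $1$, so $\dim_k I(\chi)_w^{\Iw_1}=1$; the same computation with $\Iw_1$ replaced by $\Iw$ gives $\dim_k I(\chi)_w^{\Iw}=1$. Summing over $w$ yields $\dim_k I(\chi)^{\Iw_1}=|W|=\dim_k I(\chi)^{\Iw}$, and since $I(\chi)^{\Iw}\subseteq I(\chi)^{\Iw_1}$ we conclude $I(\chi)^{\Iw}=I(\chi)^{\Iw_1}$, which finishes the proof.

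The step I expect to be the main obstacle is the cell-by-cell analysis of the previous paragraph: one must unwind the definitions of $\Iw$ and $\Iw_1$ carefully enough to know that $\Iw=T_0\Iw_1$ and that $\chi$ really is trivial on the subgroups $B(\OO_x)$ that appear, uniformly in $w$, and keep track of the normalization of induction (harmless here since $|\delta_B|_{F_x}$ is trivial modulo $p$). One also needs, as input for the reduction in the first paragraph, the characteristic-$p$ fact that an irreducible admissible $k[G]$-module with nonzero $\Iw$-invariants is a subquotient of an unramified principal series, which is what makes \cref{prop:embeds} applicable; everything else is formal. This recovers the relevant case of \cite[Theorem~B.1]{CHT}.
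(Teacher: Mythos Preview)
Your argument is correct. The paper does not give its own proof of this proposition; it simply records it as a consequence of \cite[Theorem~B.1]{CHT}. What you have written is a clean, self-contained justification of exactly the piece of that theorem being invoked.

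A brief comparison: where the paper black-boxes the statement, you reduce to an embedding $V\hookrightarrow I(\chi)$ with $\chi$ unramified (via \cref{prop:embeds}) and then prove the stronger equality $I(\chi)^{\Iw}=I(\chi)^{\Iw_1}$ by a Bruhat-cell count. Both steps are sound. For the cell computation, the two facts you isolate are exactly the right ones: $\Iw=T_0\Iw_1$ (so $H_w\backslash\Iw/\Iw_1$ is a point) and $\chi|_{B(\OO_x)}=1$ (so the transformation law on each cell is trivial); together they force $\dim_k I(\chi)_w^{\Iw_1}=1$. The only place I would tighten the exposition is the first reduction: to invoke \cref{prop:embeds} you need that $V^{\Iw}\neq 0$ forces the supercuspidal support of $V$ to be unramified. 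You gesture at this via \cite{VigBook}, which is fine, but it may be worth saying explicitly that the canonical map $V^{\Iw}\to (r_B^G V)^{T_0}$ is injective here because $p$ is coprime to $q$ (so the averaging over compact opens in $N$ does not kill $\Iw^+$-invariants), whence $(r_B^G V)^{T_0}\neq 0$ and the supercuspidal support is a $W$-orbit of unramified characters. With that sentence added, your write-up stands on its own as a proof of the cited fragment of \cite[Theorem~B.1]{CHT}.
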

Under the conditions of \ref{prop:Iw1_same_Iw}, we thus get an isomorphism
\begin{align}\begin{split}\label{eq:mnH1}
    H^1(\Iw, V) & \cong H^1(B(k), V^{K^1(x)}) \cong H^1(T(k), V^{\Iw_1}) \\ & \cong H^1(T(k), V^{\Iw}) 
    \cong \Hom(T(k), V^{\Iw})
\end{split}\end{align} 

Both sides of \ref{eq:mnH1} can be endowed with the action of $\HH_k(G, \Iw)$. On $H^1(\Iw, V)$ we take the derived $\HH_k(G, \Iw)$-action
and on $\Hom(T(k), V^{\Iw})$ we consider the natural action on the target. 
\begin{prop}\label{prop:equivX}
The isomorphism \ref{eq:mnH1} is equivariant with respect to $X^i$ for all $1 \leq i \leq n$.
\end{prop}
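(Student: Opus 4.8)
The plan is to make the isomorphism \eqref{eq:mnH1} explicit at the level of cocycles and then check equivariance for the operators $T_j = [\Iw t_j \Iw]$ directly; since $X^i = T_i(T_{i-1})^{-1}$ and \eqref{eq:mnH1} is a map of $\HH_k(G,\Iw)$-modules once it is equivariant for all the $T_j$, this yields equivariance for the $X^i$. Write $\pi \colon \Iw \twoheadrightarrow \Iw/\Iw^1(x) \cong T(k)$ for the natural projection; since $T(k)$ is abelian and $\Iw^1(x)$ is normal in $\Iw$, this is a group homomorphism. By \Cref{prop:Iw1_same_Iw} the subspace $V^{\Iw}$ coincides with $V^{\Iw_1} = V^{\Iw^1(x)}$, is fixed pointwise by $\Iw$, and equals $(V^{K^1(x)})^{U(k)}$. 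Each of the isomorphisms in \eqref{eq:mnH1} is, read in the appropriate direction, an inflation map (along $\Iw \to B(k) \to T(k)$, using that $V^{\Iw}$ is a $B(k)$-submodule of $V^{K^1(x)}$ on which $B(k)$ acts through $T(k)$), and inflation of a homomorphism valued in $V^{\Iw}$ is again such a homomorphism. Hence the first step I would carry out is to record that the class in $H^1(\Iw, V)$ corresponding to $f \in \Hom(T(k), V^{\Iw})$ is represented by the explicit cocycle $\phi_f := f \circ \pi \colon \Iw \to V^{\Iw} \subseteq V$ (a cocycle because $\Iw$ acts trivially on $V^{\Iw}$, so the cocycle identity reduces to $\phi_f$ being a homomorphism). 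It therefore suffices to prove that $T_j \cdot \phi_f$ is cohomologous to $\phi_{T_j \cdot f}$, where on the right $T_j$ acts through its action on the target $V^{\Iw}$.

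The second step is to unwind the derived Hecke action of $[\Iw t_j \Iw]$ on $H^1(\Iw, V)$ as the usual composite of restriction to $\Iw^{t_j} = t_j^{-1}\Iw t_j \cap \Iw$, conjugation by $t_j$ to $\Iw_{t_j} = \Iw \cap t_j\Iw t_j^{-1}$, and corestriction back to $\Iw$. Because $t_j$ is dominant, the Iwahori factorization $\Iw = \Iw^{-} T_0 \Iw^{+}$ (with $\Iw^{\pm}$ the opposite unipotent parts) gives $\Iw_{t_j} = (t_j \Iw^{-} t_j^{-1})\, T_0\, \Iw^{+}$, and one may choose a transversal for $\Iw/\Iw_{t_j}$ inside $\Iw^{-} \subseteq \ker\pi$; I would moreover arrange (using that a finite group admits a common transversal for the left and right cosets of a subgroup) that this transversal $\{\alpha_\ell\}$ also represents the right cosets $\Iw_{t_j}\backslash\Iw$, so that $[\Iw t_j \Iw]$ acting on $V^{\Iw}$ is $v \mapsto \sum_\ell \alpha_\ell t_j v$. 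The two observations that then finish the argument are: (i) conjugation by the diagonal element $t_j$ is the identity on $T_0$, so $\pi(t_j^{-1} x t_j) = \pi(x)$ for all $x \in \Iw_{t_j}$, whence the conjugated, restricted cocycle is $x \mapsto t_j \cdot f(\pi(x))$ on $\Iw_{t_j}$; and (ii) since the chosen representatives lie in $\ker\pi$, in the corestriction formula the cocycle is always evaluated at elements of $\Iw_{t_j}$ with the same $\pi$-image as the input $\gamma \in \Iw$. Combining, $(T_j \cdot \phi_f)(\gamma) = \sum_\ell \alpha_\ell \, t_j \, f(\pi(\gamma)) = \big([\Iw t_j \Iw] \cdot f(\pi(\gamma))\big) = \phi_{T_j \cdot f}(\gamma)$, i.e. $T_j \cdot \phi_f = \phi_{T_j \cdot f}$ on the nose.

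The final step is bookkeeping: deduce equivariance for $X^i = T_i (T_{i-1})^{-1}$ from equivariance for all $T_j$ together with the fact that \eqref{eq:mnH1} intertwines module structures, so that $\Phi(T_{i-1}^{-1}\xi) = T_{i-1}^{-1}\Phi(\xi)$ and hence $\Phi(X^i \xi) = X^i \Phi(\xi)$, where $\Phi$ denotes \eqref{eq:mnH1}.

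The substantive content is concentrated in the first two steps — pinning down the explicit cocycle representative $\phi_f = f\circ\pi$ via \eqref{eq:mnH1}, and exhibiting a transversal for $\Iw/\Iw_{t_j}$ contained in $\ker\pi$ that also represents right cosets. The main obstacle I anticipate is purely one of conventions: keeping the normalizations of the double-coset operator on group cohomology (placement of $t_j$ versus $t_j^{-1}$, of restriction versus corestriction) consistent with those fixed in \Cref{sec:reps}, and verifying the dominance of $t_j$ relative to the Iwahori factorization chosen there. Once the representatives are placed in $\ker\pi$ the cohomological computation collapses to the $H^0$-level statement that $[\Iw t_j \Iw]$ acts on $V^{\Iw}$, which requires nothing further.
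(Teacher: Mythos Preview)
Your approach is essentially the same as the paper's: both arguments amount to choosing explicit coset representatives for $\Iw t_j \Iw / \Iw$ lying in $\ker\pi$ (the paper writes them as upper-triangular matrices ``in the Borel with the same diagonal'') and observing that the derived Hecke action on the cocycle $f\circ\pi$ then collapses to the $H^0$-action on $V^{\Iw}$. One small correction: since $t_j$ is dominant you have $\Iw_{t_j} = \Iw^{-}\, T_0\, (t_j \Iw^{+} t_j^{-1})$, not $(t_j \Iw^{-} t_j^{-1})\, T_0\, \Iw^{+}$ (the latter would contain all of $\Iw$), so your transversal should sit in $\Iw^{+}$ rather than $\Iw^{-}$ --- but as both unipotent pieces lie in $\ker\pi$, the rest of your argument goes through verbatim once this is fixed, and the common left--right transversal device becomes unnecessary.
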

\begin{proof}
The action of $X^i$ on $[f] \in H^1(\Iw, V)$
can be described as follows. Write \[\Iw t_i \Iw = \bigsqcup_j g_{i,j} \Iw.\] 
We now give an explicit description for $g_{i,j}$. Fix a set of representatives $S \subset \OO_F$ for $k$.
For each $m \in M_{i \times (n-i)}(S)$ let $g_{i,m}$ be the matrix such that $g_{i,m}(k, k) = \varpi$ for $k \leq i$,
$g_{i,m}(k, k) = 1$ for $k > i$ and $g_{i,m}(k,\ell) = m(k, \ell - i)$ for $k \leq i, \ell > i$. The rest of the entries are set to $0$.
Let us show that this is a full set of representatives. First we show that $g_{i, m}$ represent distinct cosets, that is 
that $g_{i,m}^{-1}g_{i, m'} \notin \Iw$ for $m \neq m'$. Suppose $m(k, \ell) \neq m'(k, \ell)$. Then 
\[(g_{i,m}^{-1}g_{i, m'})(k, \ell+i) = \varpi^{-1}(m'(k, \ell) - m(k, \ell))\] which is not in $\OO_F$. 
Now we just need to verify that the number of cosets is $q^{i(n-i)}$. Indeed, 
\[[\Iw t_i\Iw : \Iw] = [\Iw : \Iw \cap t_i\Iw t_i^{-1}] = q^{i(n-i)}\] since $\Iw \cap t_i\Iw t_i^{-1}$ are just the elements of the Iwahori
whose $(k, \ell)$-coordinates for $k \leq i, \ell > i$ vanish $\mod \varpi$.

Then 
\[(X^i[f])(x) = \sum_j g_{i,\sigma(j)} f(g_{i,\sigma(j)}^{-1}xg_{i,j})\] where $\sigma$ is the unique permutation such that 
\[g_{i,\sigma(j)}^{-1}xg_{i,j} \in \Iw\] for all $j$. Denote by $\ov{\phantom{x}} : \Iw \to T(k)$ the reduction map.
Let $s$ be the inverse of \ref{eq:mnH1}. For $[\tau] \in \Hom(T(k), V^{\Iw})$ we get
\begin{align*}
    (X^i[s(\tau)])(x) & = \sum_j g_{i,\sigma(j)} s(\tau)(\ov{g_{i,\sigma(j)}^{-1}xg_{i,j}}) \\ & = 
    \sum_j g_{i,\sigma(j)} s(\tau)(\ov{x}) = s(X^i[\tau])(x)
\end{align*}
The second equality is due to all the $g_{i,j}$ being in the Borel and having the same diagonal.
\end{proof}

\begin{defn}
    A $G$-modules $V$ over $\ov{\F}_p$ if \textit{locally admissible} if it is smooth and for every $v \in V$ the subrepresentation generated by $v$ is admissible. Let $\CC$ denote the abelian category of locally admissible $G$-modules $V$ over $\ov{\F}_p$ such that every irreducible subquotient of $V$ is associated to $\ov{\rho}$. 
\end{defn}

The following is analogous to \cite[Lemma~9.14]{CG}:
\begin{prop}\label{prop:enoughinj}
    The category $\CC$ has enough injectives, and the inclusion functor from $\CC$ to locally admissible $G$-modules is exact.
\end{prop}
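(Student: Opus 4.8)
The plan is to mimic the standard argument (as in \cite[Lemma~9.14]{CG} or the Emerton–Gee–Savitt style localization arguments) showing that a suitable Serre-subcategory of locally admissible representations, cut out by a condition on supercuspidal supports, is closed under subquotients and extensions and inherits enough injectives from the ambient category. First I would recall that the category $\mathrm{Mod}^{\mathrm{l.adm}}_{\ov{\F}_p}(G)$ of locally admissible $\ov{\F}_p[G]$-modules is a locally Noetherian (in fact locally finite) Grothendieck abelian category, hence has enough injectives; this is the Emerton result on which everything rests. The key structural input is that $\CC$ is defined by a condition — every irreducible subquotient is associated to $\ov\rho$ — that is visibly stable under passage to subobjects, quotients, and extensions: an extension of two objects of $\CC$ has its Jordan–Hölder factors among those of the two pieces, all of which are associated to $\ov\rho$. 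Hence $\CC$ is a Serre subcategory of $\mathrm{Mod}^{\mathrm{l.adm}}_{\ov{\F}_p}(G)$, so the inclusion functor is exact (it is a full abelian subcategory stable under subquotients and extensions), which is the second assertion.

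For enough injectives, I would argue that $\CC$ is closed under arbitrary direct limits inside $\mathrm{Mod}^{\mathrm{l.adm}}_{\ov{\F}_p}(G)$: a direct limit of locally admissible modules each of whose irreducible subquotients is associated to $\ov\rho$ again has this property, since any finitely generated (hence admissible) subobject of the colimit already lives inside a finite stage. Thus $\CC$ is itself a Grothendieck abelian category (it has a generator, namely the direct sum of the countably many isomorphism classes of irreducibles associated to $\ov\rho$ together with their "injective hulls within finite-length objects" — more simply, it is a localizing-type subcategory), and a Grothendieck abelian category has enough injectives by the Grothendieck–Gabriel theorem. The cleanest route: $\CC$ is the full subcategory of objects supported on the union of the (finitely or countably many) relevant blocks of the category in the sense of a decomposition $\mathrm{Mod}^{\mathrm{l.adm}}_{\ov{\F}_p}(G) = \prod_\beta \mathrm{Mod}^{\mathrm{l.adm}}_{\ov{\F}_p}(G)_\beta$ by supercuspidal support, using Proposition~\ref{prop:embeds} to see that "associated to $\ov\rho$" picks out exactly a (finite) set of such blocks — namely those whose supercuspidal support is a permutation of $(\chi_1,\dots,\chi_n)$. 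A product factor of a category with enough injectives has enough injectives, and injectives of a factor are injective in the whole category, which gives both claims at once.

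The main obstacle — really the only subtle point — is establishing the block decomposition, i.e. that the functor sending $V$ to its maximal subobject whose irreducible subquotients are all associated to $\ov\rho$ is exact and splits off as a direct factor; equivalently, that there is no extension between an irreducible associated to $\ov\rho$ and one that is not. Here I would use that $G(\OO_x)$-socle/coinvariant arguments together with the center: the relevant invariant distinguishing the block is the action of the spherical Hecke algebra (equivalently the supercuspidal support), which is locally constant on any locally admissible module, so an irreducible not associated to $\ov\rho$ cannot appear as a subquotient of an extension built from modules in $\CC$ without its Hecke eigensystem already being present. Concretely this reduces to the statement that $\mathrm{Ext}^1_{\mathrm{Mod}^{\mathrm{l.adm}}}(\pi, \pi') = 0$ when $\pi$ and $\pi'$ have incompatible supercuspidal supports, which follows from the block decomposition of the category of smooth $\ov{\F}_p$-representations by supercuspidal support (Vignéras; see \cite{VigBook}), valid here because $p > n$ and $q \equiv 1 \pmod p$ so that cuspidal equals supercuspidal. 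Once the decomposition $\mathrm{Mod}^{\mathrm{l.adm}}_{\ov{\F}_p}(G) = \CC \times \CC'$ is in hand, exactness of the inclusion and existence of enough injectives in $\CC$ are formal, completing the proof.
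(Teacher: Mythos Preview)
Your Serre-subcategory argument is correct and would prove the proposition as stated: $\CC$ is closed under subquotients, extensions, and coproducts inside the category of locally admissible $G$-modules, so the inclusion has a right adjoint (the ``largest $\CC$-subobject'' functor), and since the inclusion is exact this right adjoint preserves injectives, giving enough injectives in $\CC$.

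Your preferred ``cleanest route,'' however, has a gap. The Vign\'eras block decomposition is indexed by \emph{inertial} classes of supercuspidal supports, not by supercuspidal supports themselves. Every irreducible associated to $\ov{\rho}$ lies in the single unipotent block (the $\chi_i$ being unramified), but so does every irreducible with unramified supercuspidal support, including those associated to any other unramified $\ov{\rho}'$. So Vign\'eras alone does not exhibit $\CC$ as a direct factor, and the $\Ext^1$-vanishing you claim between irreducibles with distinct unramified supercuspidal supports is not in \cite{VigBook}. You do gesture at the right invariant---``the action of the spherical Hecke algebra''---but you do not explain how to make that algebra act on objects of the category rather than merely on their spherical vectors.

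This is precisely what the paper supplies. It first places $\CC$ inside the unipotent block, then invokes \cite[Theorem~B.1]{CHT} (valid under the standing hypotheses $q\equiv 1\pmod p$, $p>n$) to identify that block with the category of $\HH_k(G,\Iw^p)$-modules. The Bernstein embedding of the spherical Hecke algebra $k[X_1,\dots,X_n]^W$ into $\HH_k(G,\Iw^p)$ then makes it act, and the locally-finite decomposition of any Hecke module by its maximal ideals yields the finer splitting $\D = \bigoplus_{\m} \D_{\m}$; one checks $\CC = \D_{\mathfrak{n}}$ for the ideal $\mathfrak{n}$ determined by the eigenvalues of $\ov{\rho}(\Frob_x)$. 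Besides proving the proposition, this direct-summand description is what later justifies identifying $R^kF(V)$ with $H^k(G(\OO_x),V)$, since injectives of $\CC$ are then injective in the ambient category---something your abstract Serre-subcategory argument alone would not give.
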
    
\begin{proof}
    Inside the category of $G$-modules, the category $\CC$ is fully contained inside the unipotent block (the block containing the trivial representation). By part 4) of
    \cite[Theorem~B.1]{CHT} the unipotent block is equivalent to the category of $\HH_k(G, \Iw^p)$-modules. Via the Bernstein embedding such modules
    can naturally be viewed as $\HH_k(G, G(\OO_x))$-modules, where $\HH_k(G, G(\OO_x))$ can be explicitly described via the 
    Satake isomorphism as $k[X_1, \ldots, X_n]^W$. If $V$ is any locally admissible element of the unipotent block, 
    the associated Hecke module $V^{\Iw^p}$ is locally finite-dimensional over $k$ and thus we can write
    \[V^{\Iw^p} = \bigoplus_{\m} V^{\Iw^p}_\m\] where the sum is taken over all maximal ideals of $\HH_k(G, G(\OO_x))$.
    Let $\D$ denote the category of locally admissible representations in the unipotent block. Then we can write 
    $\D = \bigoplus_{\m} \D_\m$ where $\D_\m$ consists of representations whose associated $\HH_k(G, G(\OO_x))$-module
    is supported only at $\m$. The maximal ideals of $\HH_k(G, G(\OO_x))$ have the form 
    $(t_1 - \alpha_1, \ldots, t_n - \alpha_n)$ where $\alpha_i \in k$ and $t_i = e_i(X_1, \ldots, X_n)$ is the $i$-th elementary symmetric polynomial of
    $X_1, \ldots, X_n$. If we now let $\mathfrak{n}$ be the ideal defined by $\alpha_i = e_i(\chi_1(\varpi), \ldots, \chi_n(\varpi))$,
    then it is clear that $\CC = \D_{\mathfrak{n}}$. The exactness is now clear, and to show that $\CC$ has enough injectives it is enough to check that the category $\text{Mod}^{\text{l.adm.}}_G(\ov{\F}_p)$ of locally admissible $G$-modules has enough injectives. The full category $\text{Mod}_G(\ov{\F}_p)$ certainly has enough injectives, and the functor\\ $\mathcal{L} : \text{Mod}_G(\ov{\F}_p) \to \text{Mod}^{\text{l.adm.}}_G(\ov{\F}_p)$ taking a module to its smooth locally admissible vectors is right adjoint to the natural embedding $\text{Mod}^{\text{l.adm.}}_G(\ov{\F}_p) \to \text{Mod}_G(\ov{\F}_p)$. This proves the claim.
\end{proof}

From now on, fix $\alpha = \chi_i(\varpi)$ for some $1 \leq i \leq n$.
Let \[P(X) = \prod \limits_{i=1}^n (X - \chi_i(\varpi)).\] 
For $1 \leq j \leq n_2$, let $P_j$ be a polynomial which roots with multiplicities are precisely 
\[\sum_{\substack{J \subset S \\ \#J = j}} \prod_{a \in J} \chi_a(\varpi).\] Factor $P_j = Q_jR_j$ where 
\[R_j(X) = \left(X - \binom{n_2}{j}\alpha^j\right)^{k_j}\] and $Q_j, R_j$ are coprime.  
Set \[e_\alpha \coloneqq \lim_{m \to \infty}\left(\prod \limits_{i=1}^{n_2} Q_j(V^{j,2})\right)^{m!}.\]
Here we consider $e_{\alpha}$ as an operator acting $V^{\p(x)}$ for $V \in \CC$. Since objects in $\CC$ are locally admissible, the limit makes sense.

We now define two functors $F, G : \CC \to \ov{\F}_p\mathbf{-Vect}$. On objects we set 
\[F(V) \coloneqq V^{G(\OO_x)}, \qquad G(V) \coloneqq e_\alpha V^{\p(x)}\]
Note that $F, G$ are both left-exact, and $e_\alpha$ is exact. Then we can form derived functors $R^kF, R^kG$ and identify
\[R^kF(V) = H^k(G(\OO_x), V), \qquad R^kG(V) = e_\alpha H^k(\p(x), V)\]
We have a natural transformation $\iota : F \to G$ given by composing the inclusion $V^{G(\OO_x)} \xhookrightarrow{} V^{\p(x)}$
with $e_\alpha$.

We will make use of the following simple algebraic fact.
\begin{lem}\label{lem:expl_inf_iso}
    Let $G$ be a profinite group $H \triangleleft G$ be a normal subgroup. Let $A$ be a $p$-torsion $G$-module for some positive integer 
    $p$ and let $H$ have pro-$q$ order for a prime $q$ satisfying $q \equiv 1 \pmod p$. Then the inflation map \[\inf : H^1(G/H, A^H) \to H^1(G, A)\] is an isomorphism
    whose inverse sends a cocycle $[f] \in H^1(G, A)$ to \[g \mapsto f(g) + (1-g)a_f\] for some $a_f \in A$.
\end{lem}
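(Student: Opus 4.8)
The backbone of the argument is the Lyndon--Hochschild--Serre (inflation--restriction) exact sequence
\[0 \to H^1(G/H, A^H) \xrightarrow{\inf} H^1(G,A) \xrightarrow{\res} H^1(H,A)^{G/H} \to H^2(G/H, A^H),\]
valid for continuous cohomology since $H$ is a closed normal subgroup. This gives injectivity of $\inf$ for free, and shows that $\inf$ is an isomorphism as soon as $H^1(H,A) = 0$. So the first step of the plan is to establish that vanishing, and the second is to make the resulting splitting explicit.

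For the vanishing, the key numerical input is that $q \equiv 1 \pmod p$ forces $\gcd(p,q) = 1$, so the $p$-torsion module $A$ has exponent prime to the pro-order of the pro-$q$ group $H$. I would deduce $H^i(H,A) = 0$ for all $i \geq 1$ by the two standard reductions: first write $A$ as the filtered colimit of its finite $G$-stable submodules (legitimate because every element of a smooth module has open stabiliser, hence finite $H$-orbit, hence generates a finite submodule) and use that continuous cohomology commutes with such colimits, reducing to $A$ finite; then write $H^i(H,A) = \colim_U H^i(H/U, A^U)$ over open normal $U \triangleleft H$, reducing to the cohomology of a finite $q$-group with coefficients in a finite abelian $p$-group, which vanishes in positive degrees since it is killed both by the group order (restriction--corestriction through the trivial subgroup) and by $p$. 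Hence $H^1(H,A) = 0$ and $\inf$ is an isomorphism.

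To identify the inverse, I would start from a continuous cocycle $f : G \to A$. Its restriction $f|_H$ is a cocycle, hence a coboundary by the vanishing just proved, so there is $a_f \in A$ with $f(h) = (h-1)a_f$ for all $h \in H$. The cohomologous cocycle $f'(g) := f(g) + (1-g)a_f$ then satisfies $f'|_H \equiv 0$; the cocycle identity forces $f'(gh) = g \cdot f'(h) + f'(g) = f'(g)$ for $h \in H$, so $f'$ is constant on the cosets $gH$, and then, writing $hg = g\,(g^{-1}hg)$ with $g^{-1}hg \in H$ by normality, comparing $f'(hg) = h \cdot f'(g)$ with $f'(hg) = f'(g)$ shows $f'(g) \in A^H$. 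Thus $f'$ is inflated from a cocycle $G/H \to A^H$, which therefore maps to $[f]$ under $\inf$; this is exactly the asserted formula $[f] \mapsto [g \mapsto f(g) + (1-g)a_f]$ for $\inf^{-1}$, and it is independent of the choice of $a_f$ since two choices differ by an element of $A^H$, changing $f'$ by a coboundary on $G/H$.

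The argument is essentially formal and I do not anticipate a genuine obstacle. The only points needing a little care are the colimit reduction in the vanishing step --- relevant because in the applications $A$ is an infinite-dimensional smooth $k$-vector space rather than a finite module, so $H^1(H,A) = 0$ is not literally the vanishing of cohomology of a finite $q$-group --- and the verification that $f'$ genuinely takes values in $A^H$, as opposed to merely being right-$H$-invariant, which is precisely where the normality of $H$ enters.
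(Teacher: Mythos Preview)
Your proposal is correct and follows exactly the same route as the paper: use inflation--restriction, show $H^1(H,A)=0$ from $\gcd(p,q)=1$, and then take $a_f$ so that $f|_H$ is the coboundary $h\mapsto (h-1)a_f$, whence $g\mapsto f(g)+(1-g)a_f$ descends to $G/H$. The paper's proof is a two-line sketch omitting the colimit reduction and the verification that the modified cocycle lands in $A^H$; your write-up simply fills in those routine details.
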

\begin{proof}
The condition $q \equiv 1 \pmod p$ insures that $H^1(H, A)$ vanishes. Then it is enough to take $(1-g)a_f$ to be the coboundary trivializing 
the restriction of $[f]$ to $H$.
\end{proof}

\begin{prop}\label{prop:H1inj}
    Let $\pi$ be an irreducible admissible $\ov{\F}_p[G]$-module associated to $\ov{\rho}$. Then the map \[f : H^1(G(k), \pi^{K^1(x)}) \to e_\alpha H^1(P(k), \pi^{K^1(x)})\]
    is injective.
\end{prop}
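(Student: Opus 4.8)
The plan is to factor $f$ through ordinary restriction and then to show that the $e_\alpha$-projection loses no information on the image of that restriction.

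Write $M \coloneqq \pi^{K^1(x)}$, viewed as a representation of $G(k) = \GL_n(\OO_x/\varpi)$; then $f$ is the composite of the restriction map $\res \colon H^1(G(k), M) \to H^1(P(k), M)$ with the projector $e_\alpha$ on $H^1(P(k), M)$. The first step is that $\res$ is injective. The index $[G(k) : P(k)]$ is the Gaussian binomial coefficient $\binom{n}{n_1}_q$, and since $q \equiv 1 \pmod p$ each factor $(q^i - 1)/(q-1) = 1 + q + \dots + q^{i-1}$ is $\equiv i \pmod p$, so $\binom{n}{n_1}_q \equiv \binom{n}{n_1} \pmod p$; as $p > n$ this is a unit. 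Hence $\mathrm{cores} \circ \res = [G(k):P(k)] \cdot \id$ is an automorphism of the $p$-torsion group $H^1(G(k), M)$, so $\res$ is split injective. (The same remark applied to $\Iw \subset \p(x) \subset G(\OO_x)$ and to its reductive quotient $B(k) \subset P(k) \subset G(k)$ identifies all of $H^1(G(k),M)$, $H^1(P(k),M)$ and $H^1(B(k),M) \cong \Hom(T(k), \pi^{\Iw})$ compatibly through \eqref{eq:mnH1} and \ref{prop:equivX}, which is a convenient framework for the computation below.)

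It remains to show $e_\alpha$ acts as the identity on $\res\bigl(H^1(G(k), M)\bigr)$. The key claim is that for each $1 \le j \le n_2$ the Hecke operator $V^{j,2}$ has $\binom{n_2}{j}\alpha^j$ as its unique (generalized) eigenvalue on $\res\bigl(H^1(G(k), M)\bigr)$. Granting this: since $\binom{n_2}{j}\alpha^j$ is a root of $R_j$ and $Q_j, R_j$ are coprime, $Q_j(V^{j,2})$ is invertible on this subspace, hence so is $h \coloneqq \prod_{j=1}^{n_2} Q_j(V^{j,2})$; and over $\ov{\F}_p$ any invertible operator on a locally finite space has all eigenvalues equal to roots of unity, so $h^{m!} \to \id$ there and therefore $e_\alpha = \lim_m h^{m!}$ is the identity on $\res\bigl(H^1(G(k),M)\bigr)$. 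Thus $f = e_\alpha \circ \res = \res$ on $H^1(G(k),M)$, which is injective by the previous paragraph.

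To prove the eigenvalue claim I would represent a class in $\res\bigl(H^1(G(k),M)\bigr)$ by a cocycle $\phi$ that is defined on all of $G(\OO_x)$ (and trivial on $K^1(x)$), and evaluate $V^{j,2}[\phi]$ using the explicit coset decomposition of $\p(x)\diag(1^{n_1},\varpi^j,1^{n_2-j})\p(x)$ as in the proof of \ref{prop:equivX}: choosing coset representatives with common diagonal part and using that $\phi$ is $B(k)$-equivariant, the contributions collapse to the diagonal part, and the sum reduces to the action of the corresponding element of the spherical Hecke algebra of the $\GL_{n_2}$-block on the $G(\OO_x)$-fixed vectors of $\pi$. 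On those vectors this is the Satake image of the $j$-th elementary symmetric function of the $\GL_{n_2}$-block Satake parameters, all of which equal $\alpha$, giving $e_j(\alpha,\dots,\alpha) = \binom{n_2}{j}\alpha^j$ — which is exactly why $P_j$ was factored with the distinguished piece $R_j = \bigl(X - \binom{n_2}{j}\alpha^j\bigr)^{k_j}$. In the language of the functors $F$ and $G$ of this section, the claim is that $e_\alpha$ fixes $V^{G(\OO_x)}$ for every $V \in \CC$, so that the natural transformation $\iota : F \to G$ is split injective (a splitting being averaging over $G(\OO_x)/\p(x)$, legitimate because $[G(\OO_x):\p(x)] = \binom{n}{n_1}_q$ is a $p$-unit); then $R^1 \iota$, and in particular $f$, is injective.

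The main obstacle is this eigenvalue computation. The Iwahori case is \ref{prop:equivX} and the hyperspecial case is classical Satake theory, but the two-block parahoric needs a more careful bookkeeping of the coset decomposition, and one must check that reduction to characteristic $p$ with $q \equiv 1 \pmod p$ — which trivializes the $q$-power factors of the classical formulas — does not introduce generalized eigenvalues of $V^{j,2}$ on $\res\bigl(H^1(G(k),M)\bigr)$ other than $\binom{n_2}{j}\alpha^j$. Everything else (the index computation, the convergence defining $e_\alpha$, and the identifications coming from \ref{prop:Iw1_same_Iw} and \eqref{eq:mnH1}) is routine given the results already established.
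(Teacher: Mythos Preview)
Your reduction to showing that $e_\alpha$ acts as the \emph{identity} on $\res\bigl(H^1(G(k),M)\bigr)$ is where the argument breaks down. This claim is strictly stronger than the injectivity of $f$, and it is already false on $H^0$: the spherical vector in $I(\chi)^{G(\OO_x)} \subset I(\chi)^{\p(x)}$ does not lie in $e_\alpha I(\chi)^{\p(x)}$, since by the description drawn from \cite[Lemma~5.10]{Thorne12} the image of $e_\alpha$ on $I(\chi)^{\Iw}$ is the span of $\{\varphi_{w'} : w' \in W'\}$, a proper subspace not containing $\sum_w \varphi_w$ unless every $\chi_i(\varpi)$ equals $\alpha$. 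In particular $e_\alpha$ does not fix the spherical line, so your alternative framing ``$e_\alpha$ fixes $V^{G(\OO_x)}$ for all $V \in \CC$'' is likewise false. The source of the error is the assertion that the ``$\GL_{n_2}$-block Satake parameters'' of $\pi$ are all equal to $\alpha$: nothing in the hypotheses singles out $\alpha$ on the spherical line, and the operator $V^{j,2}$ does not even preserve $\pi^{G(\OO_x)}$ inside $\pi^{\p(x)}$, so speaking of its eigenvalue on that subspace has no content as stated.

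The paper's proof does not try to show $e_\alpha = \id$ on the image of restriction. It embeds both cohomology groups into $\Hom(T(k), I(\chi)^{\Iw})$ via \eqref{eq:mnH1sp}, and observes that a class $[s] \in \ker f$ satisfies $s(t)(w_0) = 0$ for all $t \in T(k)$ and all $w_0 \in W'$. The decisive step --- and the one your proposal is missing --- is that since $s$ is a cocycle on all of $G(k)$, one may conjugate $t$ by an arbitrary Weyl element $w$: expanding $0 = s(wtw^{-1})(w_0)$ with the cocycle relation yields $s(t)(w_0 w) = 0$ for every $w \in W$, so $s(t)$ vanishes on all of $W$ and hence $[s] = 0$. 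Thus injectivity of $f$ comes from this extra Weyl symmetry available to cocycles on $G(k)$, not from $e_\alpha$ acting trivially on the image of restriction.
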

\begin{proof}
    Both cohomology groups in question inject into $H^1(B(k), \pi^{K^1(x)})$ since $p > n$, so let us analyze that group. Since $q \equiv 1 \pmod p$,
    by inflation-restriction we get \[H^1(B(k), \pi^{K^1(x)}) \cong H^1(T(k), \pi^{\Iw_1}).\] 
    As a special case of \ref{eq:mnH1} we have
    \begin{equation}\label{eq:mnH1sp}
        H^1(\Iw, \pi) \cong H^1(B(k), \pi^{K^1(x)}) \cong \Hom(T(k), \pi^{\Iw}) \cong (\pi^{\Iw})^{\oplus n}.
    \end{equation}
    The isomorphism above is equivariant with respect to the natural actions of $\{X^i\}$ on both sides
    arising from the actions of $\HH_k(G, \Iw)$ by \cref{prop:equivX}. The space $\pi^\Iw$ injects into
    $I(\chi)^\Iw$, which has a basis $\{\varphi_w\}$ for $w \in W$, where $\varphi_w$ is supported on $Bw\Iw$ and satisfies 
    $\varphi_w(w) = 1$. It follows from the proof of \cite[Lemma~5.10]{Thorne12} that on each component of $(I(\chi)^\Iw)^{\oplus n}$ the operator $e_\alpha$ acts as a projection onto the space spanned by $\{\varphi_{w'} \mid w' \in W'\}$, where $W'$ is the subset of $W$ consisting of permutations which send $\{n_1+1, \ldots, n\}$ to the positions of  $\alpha$-s in the sequence $\chi_1(\omega), \ldots, \chi_n(\omega)$. On the level of cocycles, the isomorphism \ref{eq:mnH1sp}
    sends $[s] \in H^1(B(k), \pi^{K^1(x)})$ to the map \[g \mapsto s(g) + (1-g)\psi\]
    for some $\psi \in I(\chi)$ (\cref{lem:expl_inf_iso}).
    Thus a cocycle 
    $[s] \in H^1(G(k), I(\chi)^{K^1(x)})$ being in the kernel of $f$ means that for all $t \in T(k)$ and $w_0 \in W'$ we have
    \begin{equation}\label{eq:co_init}(s(t) + (1-t)\psi)(w_0) = 0.\end{equation} For any $w \in W$ we have
    \[(t\psi)(w) = \psi(w\tilde{t}) = \psi(w(\tilde{t})w) = \psi(w)\]
    Here $\tilde{t}$ is a lift of $t$ to $T_0$, and $w$ acts on the torus in a natural way.
    Thus \begin{equation}\label{eq:cob}((1-t)\psi)(w) = 0.\end{equation} Combining \ref{eq:co_init} and \ref{eq:cob} applied to $w_0$ we get \[s(t)(w_0) = 0.\]
    Now let us conjugate $t$ by an arbitrary $w \in W$. Since the result is again in $T$, we use the cocycle condition
    and the transformation law of $I(\chi)$ with respect to the Borel to write
    \begin{equation}\label{eq:coc1}0 = s(wtw^{-1})(w_0) = (s(w) + w(s(t) + ts(w^{-1})))(w_0)\end{equation}
    \begin{equation}\label{eq:coc2}(wts(w^{-1}))(w_0) = ws(w^{-1})(w_0) = -s(w)(w_0)\end{equation}
    Combining \ref{eq:coc1} and \ref{eq:coc2} we get \[0 = (ws(t))(w_0) = s(t)(w_0w)\]
    In other words, we now have $s(t)(w) = 0$ for all $t \in T(k)$ \textit{and} for all $w \in W$. By \ref{eq:cob} this implies
    that $[s] = 0$ since $\{\varphi_w\}$ make a basis for $I(\chi)^{\Iw}$.
\end{proof}

\begin{thm}\label{thm:c_iso}
The natural transformation $\iota : F \to G$ given by $V^{G(\OO_x)} \mapsto e_\alpha V^{\p(x)}$ on objects is an isomorphism of functors. In particular, we get functorial isomorphisms
\[\iota_* : H^k(G(\OO_x), V) \xrightarrow{\sim} e_\alpha H^k(\p(x), V)\] for all $k \geq 0$.
\end{thm}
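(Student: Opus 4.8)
The plan is to peel the assertion down to its degree‑zero, irreducible case. Since forming right derived functors is functorial, the claimed isomorphisms $\iota_* : H^k(G(\OO_x), V) \xrightarrow{\sim} e_\alpha H^k(\p(x), V)$ are automatic once we know that $\iota$ is an isomorphism of functors, i.e. that $\iota_V : V^{G(\OO_x)} \to e_\alpha V^{\p(x)}$ is an isomorphism for every $V \in \CC$; this object‑level statement is what I would prove. First I would reduce to $V$ of finite length. Taking invariants under $G(\OO_x)$ or $\p(x)$ commutes with filtered colimits, and so does $e_\alpha$ (on an admissible module it is a genuine idempotent, cut out by a fixed polynomial in the commuting operators $V^{j,2}$). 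Every $V \in \CC$ is the filtered colimit of its finitely generated subobjects, each of which is admissible of finite length: by \cref{prop:enoughinj} such an object corresponds to a finitely generated module over $\HH_k(G, \Iw^p)$ which, $\HH_k(G, \Iw^p)$ being module‑finite over the central subalgebra $\HH_k(G, G(\OO_x)) \cong k[X_1, \dots, X_n]^W$, is finitely generated over $\HH_k(G, G(\OO_x))$ and supported at the single maximal ideal $\n$.

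With $V$ of finite length I would induct on its length. For the inductive step, choose an irreducible subobject $\pi \subseteq V$ with quotient $V'$, and apply $F = (-)^{G(\OO_x)}$ and $G = e_\alpha(-)^{\p(x)}$ to the short exact sequence $0 \to \pi \to V \to V' \to 0$; the latter functor still carries it to a long exact cohomology sequence because $e_\alpha$ is exact, and the two long exact sequences are joined by $\iota$. Injectivity of $\iota_V$ is immediate from the left‑exactness of $F, G$ together with injectivity of $\iota_\pi$ and $\iota_{V'}$; surjectivity follows from the four lemma applied to the four‑term strip $F(\pi) \to F(V) \to F(V') \to R^1F(\pi)$ and its image under $\iota$, using that $\iota_\pi$ and $\iota_{V'}$ are isomorphisms (base case and inductive hypothesis) and that $R^1\iota_\pi : H^1(G(\OO_x), \pi) \to e_\alpha H^1(\p(x), \pi)$ is injective. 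This last fact is \cref{prop:H1inj}, after identifying the cohomology of $G(\OO_x)$ and $\p(x)$ with that of the finite groups $G(k)$ and $P(k)$ acting on $(-)^{K^1(x)}$ by inflation--restriction, which is legitimate since $q \equiv 1 \pmod p$ (cf. \cref{lem:expl_inf_iso}).

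Everything thus reduces to the base case: $\iota_\pi : \pi^{G(\OO_x)} \to e_\alpha \pi^{\p(x)}$ is an isomorphism for $\pi$ irreducible and associated to $\ov{\rho}$, and this is the real obstacle. Here I would embed $\pi \hookrightarrow I(\chi)$ via \cref{prop:embeds}, so that $\pi^{\p(x)} \subseteq \pi^{\Iw} \subseteq I(\chi)^{\Iw}$, a space with the explicit basis $\{\varphi_w\}_{w \in W}$, while $\dim I(\chi)^{G(\OO_x)} = 1$. Computing the action of the operators $V^{j,2}$, hence of $e_\alpha$, on $I(\chi)^{\p(x)}$ in terms of this basis --- in the spirit of the proof of \cite[Lemma~5.10]{Thorne12} already invoked in \cref{prop:H1inj} --- and locating the spherical vector of $I(\chi)^{G(\OO_x)}$ inside $I(\chi)^{\Iw}$, one shows that $e_\alpha$ does not kill $\pi^{G(\OO_x)}$ and that $e_\alpha \pi^{\p(x)}$ is exactly the image of $\pi^{G(\OO_x)}$ (both being zero when $\pi$ is ramified), whence $\iota_\pi$ is an isomorphism. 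As flagged in the introduction, the new difficulty over \cite{Thorne12} is that $\pi$ need not be generic --- it occurs as a local component of a cuspidal automorphic representation of a unitary group rather than of $\GL_n$ --- so the behaviour of $e_\alpha$ on $\pi^{\p(x)}$ must be controlled for every irreducible $\pi$ associated to $\ov{\rho}$, which is precisely what forces the detailed analysis of parahoric‑fixed vectors carried out in this section.
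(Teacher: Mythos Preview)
Your reduction to finite length and the inductive step --- four lemma on $0 \to \pi \to V \to V' \to 0$, with \cref{prop:H1inj} supplying injectivity of $R^1\iota_\pi$ --- match the paper exactly. The gap is in your base case. What \cite[Lemma~5.10]{Thorne12} actually provides, and what the paper extracts from it, is only (i) injectivity of $\iota_\pi$ for irreducible $\pi$, and (ii) that $\iota_{I(\chi)}$ is an isomorphism. It does \emph{not} give surjectivity of $\iota_\pi$ for an arbitrary irreducible constituent $\pi \subset I(\chi)$. Concretely: when $\pi^{G(\OO_x)}=0$ you need $e_\alpha\pi^{\p(x)}=0$, and although $e_\alpha\pi^{\p(x)}$ lies in the one-dimensional space $e_\alpha I(\chi)^{\p(x)}$, that line is \emph{not} $I(\chi)^{G(\OO_x)}$ itself --- the idempotent $e_\alpha$ does not fix the spherical vector pointwise --- so you cannot conclude $e_\alpha\pi^{\p(x)} \subseteq \pi \cap I(\chi)^{G(\OO_x)} = 0$ by a bare intersection argument. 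If $e_\alpha\pi^{\p(x)}$ were nonzero, the spherical vector of $I(\chi)/\pi$ would be killed by $e_\alpha$, i.e.\ $\iota_{I(\chi)/\pi}$ would fail to be injective; ruling this out is exactly what is missing.

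The paper fills this gap by running the induction in two passes. First it proves only \emph{injectivity} of $\iota_V$ for every finite-length $V$, by your four-lemma induction with base case (i); this applies in particular to $V = I(\chi)/\pi$. Then the four lemma on
\[
\begin{tikzcd}
0 \arrow{r} & F(\pi) \arrow{r} \arrow{d}{\iota_\pi} & F(I(\chi)) \arrow{r} \arrow{d}{\iota_{I(\chi)}} & F(I(\chi)/\pi) \arrow{d}{\iota_{I(\chi)/\pi}} \\
0 \arrow{r} & G(\pi) \arrow{r} & G(I(\chi)) \arrow{r} & G(I(\chi)/\pi)
\end{tikzcd}
\]
with $\iota_{I(\chi)}$ surjective from (ii) and $\iota_{I(\chi)/\pi}$ injective from the first pass forces $\iota_\pi$ to be surjective, hence an isomorphism. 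Only after this does your induction, now with the base case secured and \cref{prop:H1inj} controlling the $R^1$ term, yield the isomorphism for all $V$.
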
    
\begin{proof}
    In the proof of \cref{prop:enoughinj} we have identified $\CC$ with a subcategory of $\HH_k(G, \Iw^p)\text{-Mod}$. Thus every element of $\CC$ is a direct limit of finite length elements of $\CC$, and it is therefore enough to establish the isomorphism for finite length $V$. The first step will be to show that 
    $\iota(V)$ is an isomorphism for all $V \in \CC$. For an irreducible subrepresentation $\pi \subset V$
    consider the diagram 
    \begin{equation}\label{eq:five_diag}
        \begin{tikzcd}
            0 \arrow{r}{} & F(\pi) 
            \arrow{r}{} \arrow{d}{\iota(\pi)} & F(V) \arrow{r}{} \arrow{d}{\iota(V)} & F(V/\pi) \arrow{d}{\iota(V/\pi)} \arrow{r}{} & R^1F(\pi) \arrow{d}{f} \\
            0 \arrow{r}{} & G(\pi) 
            \arrow{r}{} & G(V) \arrow{r}{} &
            G(V/\pi) \arrow{r}{} & R^1G(\pi)
        \end{tikzcd}
    \end{equation}
    To show that $\iota(V)$ is injective, we can use the four lemma and induct on the length of $V$. Thus we only need to show that
    $\iota(\pi)$ is injective for irreducible $\pi$. This is done in \cite[Lemma~5.10]{Thorne12}.\\

    Now we would like to show that $\iota(\pi)$ is an isomorphism. Consider the injection $\pi \subset I(\chi)$ 
    and the associated diagram
    \begin{equation}
        \begin{tikzcd}
            0 \arrow{r}{} & F(\pi) 
            \arrow{r}{} \arrow{d}{\iota(\pi)} & F(I(\chi)) \arrow{r}{} \arrow{d}{\iota(I(\chi))} & 
            F(I(\chi)/\pi) \arrow{d}{\iota(I(\chi)/\pi)} \\
            0 \arrow{r}{} & G(\pi) 
            \arrow{r}{} & G(I(\chi)) \arrow{r}{} &
            G(I(\chi)/\pi)
        \end{tikzcd}
    \end{equation}
    We already know that $\iota(I(\chi)/\pi)$ is injective. Then to show that $\iota(\pi)$ is surjective by the four lemma, 
    we need to know that $\iota(I(\chi))$ is surjective. This follows once again from the proof of \cite[Lemma~5.10]{Thorne12}.

    Finally, we are ready to see that $\iota(V)$ is an isomorphism for all $V \in \CC$. We induct on the length of $V$
    using \cref{eq:five_diag}. Since $f$ is injective by \cref{prop:H1inj}, the result follows. 
\end{proof}  
\section{Representation theory of $\GL_n(F_v)$ in characteristic $0$}\label{sec:local}

Fix a finite extension $E/\Q_p$ in $\ov{\Q}_p$ which contains
the images of all embeddings $F \to \ov{\Q}_p$. We write $\OO$ for the ring of integers
of $E$, and  $\varpi \in \OO$ for a choice of uniformizer. If $v$ is a finite place of $F$ prime to $p$, we write $\Xi_v \coloneqq \Z^n$ and $\Xi_{v,1} \coloneqq \langle \tau_v \rangle \times \Z^n$, where $\tau_v$ is the generator of $k_v^\times(p)$ -- the maximal $p$-power order quotient of $k_v^{\times}$. We have a natural homomorphism $\OO_{F_v}^\times \to \Z[\Xi_{v,1}]$ induced by the homomorphism $\OO_{F_v}^\times \to k_v^\times \to k_v^\times(p)$, which we denote by $\langle \, \cdot \, \rangle$.
Consider a standard parabolic subgroup $P \subset \GL_n(F_v)$
corresponding to a partition $n = n_1 + \ldots + n_m$ which we will denote as $\mu$. Given a partition of $n$,
we will always let $s_{\mu, i} = n_1+\ldots+n_i$, with $s_{\mu, 0} = 0$.  Let $P = MN$ and $\ov{P} = M\ov{N}$ be the Levi decompositions of $P$ and its opposite parabolic. 
Let $\m$ be the hyperspecial maximal compact subgroup of $M$. Define the subgroup of the symmetric group $S_\mu = S_{n_1} \times \ldots \times S_{n_m}$. For any positive integer $k$ let \[\Sat_k : \HH_{\Z[q_v^{1/2}]}(\GL_k(
F_v), \GL_k(\OO_{F_v})) \to \Z[q_v^{1/2}][X_1^{\pm 1}, \ldots, X_k^{\pm 1}]^{S_k}\] denote the Satake isomorphism. We use those isomorphisms to identify
\[\Sat_{\mu} = \Sat_{n_1} \otimes \ldots \otimes \Sat_{n_k} : \HH_{\Z[q_v^{1/2}]}(M, \m) \isom \Z[q_v^{1/2}][\Xi_v]^{S_\mu}\]

Consider any open compact subgroup $\mathfrak{q}$ of $\GL_n(F_v)$ and set 
\[\mathfrak{q}_M = \mathfrak{q} \cap M, \quad \mathfrak{q}^+ = \mathfrak{q} \cap N, \quad \mathfrak{q}^- = \mathfrak{q} \cap \ov{N}\]
From now on assume that $\mathfrak{q}$ has an Iwahori decomposition with respect to $P$, which means that $\mathfrak{q} = \mathfrak{q}^- \mathfrak{q}_M \mathfrak{q}^+$.
We define a submonoid $M^+ \subset M$ of \textit{positive} elements to consist of elements $m \in M$ such that 
\[m\mathfrak{q}^+m^{-1} \subset \mathfrak{q}^+, \qquad m^{-1}\mathfrak{q}^-m \subset \mathfrak{q}^-\]
Inside $M^+$ we have a further submonoid $M^{++}$ of \textit{strictly positive} elements consisting of $m \in M^+$ satisfying the following conditions:
\begin{itemize}
\item For any compact open subgroups $\mathfrak{n}_1, \mathfrak{n}_2$ of $N$ there exists a positive integer $x \geq 0$ such that 
    \[m^x\mathfrak{n}_1m^{-x} \subset \mathfrak{n}_2,\]
\item For any compact open subgroups $\ov{\mathfrak{n}}_1, \ov{\mathfrak{n}}_2$ of $N$ there exists a positive integer $x \geq 0$ such that 
\[m^{-x}\ov{\mathfrak{n}}_1m^{x} \subset \ov{\mathfrak{n}}_2.\]
\end{itemize}
We denote by $\HH_{\OO}(M, \mathfrak{q}_M)^+$ the elements of $\HH_{\OO}(M, \mathfrak{q}_M)$ whose support is contained in $M^+$.

\begin{prop}\label{prop:homplus}
\begin{enumerate}
\item The map $t_\mu^+ : \HH_{\OO}(M, \mathfrak{q}_M)^+ \to \HH_{\OO}(G, \mathfrak{q})$ given by 
\[[\mathfrak{q}_Mm\mathfrak{q}_M] \mapsto \delta_P^{1/2}[\mathfrak{q}m\mathfrak{q}]\] is an algebra homomorphism.
\item The map $t_\mu^+$ extends to a homomorphism $t_\mu : \HH_{\OO}(M, \mathfrak{q}_M) \to \HH_{\OO}(G, \mathfrak{q})$ if and only if there exists a totally positive element $a \in Z(M)$ such that $[\mathfrak{q}a\mathfrak{q}]$ is invertible in $\HH_{\OO}(G, \mathfrak{q})$.
\item Assuming the existence of the extension in (2), for any smooth $\C[\GL_n(F_v)]$-module $\pi$, the canonical map $\pi^{\mathfrak{q}} \to \pi_N^{\mathfrak{q}_M}$ is a homomorphism of $\HH_{\OO}(M, \mathfrak{q}_M)$-modules where $\HH_{\OO}(M, \mathfrak{q}_M)$ acts on $\pi^{\mathfrak{q}}$ via the map $t$.
\end{enumerate}
\end{prop}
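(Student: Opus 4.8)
The plan is to prove the intertwining in two stages: first for the positive part $\HH_\OO(M,\mathfrak{q}_M)^+$, by an explicit decomposition of the double coset $\mathfrak{q}m\mathfrak{q}$ of the type going back to Casselman and Bushnell, and then to bootstrap to all of $\HH_\OO(M,\mathfrak{q}_M)$ using the invertibility hypothesis from part (2). Two easy preliminaries: since the canonical projection $p_N\colon\pi\to\pi_N$ onto the (normalized) Jacquet module is $M$-equivariant and $\mathfrak{q}_M=\mathfrak{q}\cap M$, it carries $\pi^{\mathfrak{q}}$ into $\pi_N^{\mathfrak{q}_M}$, so the map in the statement is defined; and $\HH_\OO(M,\mathfrak{q}_M)$ is spanned over $\OO$ by the operators $[\mathfrak{q}_Mm\mathfrak{q}_M]$, with $\HH_\OO(M,\mathfrak{q}_M)^+$ spanned by those for which $m\in M^+$, so by linearity it is enough to treat $h=[\mathfrak{q}_Mm\mathfrak{q}_M]$.

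For $m\in M^+$ and $v\in\pi^{\mathfrak{q}}$ I would argue as follows. Using the Iwahori decomposition $\mathfrak{q}=\mathfrak{q}^-\mathfrak{q}_M\mathfrak{q}^+$ together with the positivity inclusions $m\mathfrak{q}^+m^{-1}\subseteq\mathfrak{q}^+$ and $m^{-1}\mathfrak{q}^-m\subseteq\mathfrak{q}^-$, one checks that $\mathfrak{q}\cap m\mathfrak{q}m^{-1}$ has the Iwahori decomposition $\mathfrak{q}^-\,(\mathfrak{q}_M\cap m\mathfrak{q}_Mm^{-1})\,(m\mathfrak{q}^+m^{-1})$, so that $\mathfrak{q}m\mathfrak{q}$ is the disjoint union of the cosets $u_Mu^+m\,\mathfrak{q}$ with $u_M$ running over $\mathfrak{q}_M/(\mathfrak{q}_M\cap m\mathfrak{q}_Mm^{-1})$ and $u^+$ over $\mathfrak{q}^+/m\mathfrak{q}^+m^{-1}$. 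Thus $[\mathfrak{q}m\mathfrak{q}]v=\sum_{u_M,u^+}u_Mu^+mv$; applying $p_N$ and using that $\mathfrak{q}^+\subseteq N$ acts trivially on $\pi_N$, the sum over $u^+$ collapses into the factor $[\mathfrak{q}^+:m\mathfrak{q}^+m^{-1}]$, while the sum over $u_M$ reassembles into the action of $[\mathfrak{q}_Mm\mathfrak{q}_M]$ on $p_N(v)\in\pi_N^{\mathfrak{q}_M}$, computed using the unnormalized $M$-action. Since $[\mathfrak{q}^+:m\mathfrak{q}^+m^{-1}]$ is the reciprocal Jacobian $|\delta_P(m)|^{-1}$ of conjugation by $m$ on $N$, this produces a factor $|\delta_P(m)|^{-1}$; the factor $\delta_P^{1/2}$ in the definition of $t_\mu^+$ and the normalizing twist by $|\delta_P|^{-1/2}$ built into the normalized Jacquet module $\pi_N$ then conspire to cancel these powers of $|\delta_P(m)|$ exactly, leaving $p_N(t_\mu^+([\mathfrak{q}_Mm\mathfrak{q}_M])v)=[\mathfrak{q}_Mm\mathfrak{q}_M]p_N(v)$. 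By linearity, $p_N$ intertwines the $t_\mu^+$-action on $\pi^{\mathfrak{q}}$ with the natural action on $\pi_N^{\mathfrak{q}_M}$ for every element of $\HH_\OO(M,\mathfrak{q}_M)^+$.

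To pass to the full algebra and the extension $t_\mu$ supplied by part (2), let $a\in Z(M)$ be the totally positive central element used there, so that $[\mathfrak{q}_Ma\mathfrak{q}_M]=[\mathfrak{q}_Ma]$ is central in $\HH_\OO(M,\mathfrak{q}_M)$ and $t_\mu([\mathfrak{q}_Ma\mathfrak{q}_M])$ is invertible in $\HH_\OO(G,\mathfrak{q})$. I would use the standard fact that $\HH_\OO(M,\mathfrak{q}_M)$ is obtained from $\HH_\OO(M,\mathfrak{q}_M)^+$ by inverting $[\mathfrak{q}_Ma\mathfrak{q}_M]$ --- concretely, for every $h$ there is $N\gg0$ with $h\,[\mathfrak{q}_Ma\mathfrak{q}_M]^N\in\HH_\OO(M,\mathfrak{q}_M)^+$ --- and observe that $[\mathfrak{q}_Ma\mathfrak{q}_M]$ acts invertibly both on $\pi^{\mathfrak{q}}$ (through $t_\mu$, since $t_\mu([\mathfrak{q}_Ma\mathfrak{q}_M])$ is invertible) and on $\pi_N^{\mathfrak{q}_M}$ (as a nonzero scalar times the invertible operator $a\in Z(M)$). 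Since this element is central, the identity already proved for $h\,[\mathfrak{q}_Ma\mathfrak{q}_M]^N$ may be divided by $[\mathfrak{q}_Ma\mathfrak{q}_M]^N$ on both sides, giving $p_N(t_\mu(h)v)=h\,p_N(v)$ for all $h\in\HH_\OO(M,\mathfrak{q}_M)$ and $v\in\pi^{\mathfrak{q}}$, which is the assertion.

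I expect the only real difficulty to lie in the middle step: obtaining the Iwahori-type decomposition of $\mathfrak{q}m\mathfrak{q}$ and, above all, matching the normalizations, so that the power of $|\delta_P(m)|$ produced by the coset count is cancelled exactly by the $\delta_P^{1/2}$ in $t_\mu^+$ and the $|\delta_P|^{-1/2}$ in the normalized Jacquet functor --- a misplaced half-power in either convention would falsify the statement. Apart from this bookkeeping the argument is the classical Casselman--Bushnell computation, and it applies verbatim for an arbitrary smooth representation $\pi$ and an arbitrary $\mathfrak{q}$ admitting an Iwahori decomposition with respect to $P$.
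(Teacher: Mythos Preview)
Your argument is correct and is precisely the standard Casselman--Bushnell computation. The paper itself does not give a proof here: it simply cites Vign\'eras, \emph{Induced $R$-representations of $p$-adic reductive groups}, \S II.6 for parts (1)--(2) and \S II.10.1 for part (3), and the argument you have written out is essentially the one found there. Your caveat about the normalizations is well placed --- that bookkeeping is indeed the only nontrivial point --- and your extension from $\HH_\OO(M,\mathfrak{q}_M)^+$ to the full algebra by inverting the central element $[\mathfrak{q}_M a\mathfrak{q}_M]$ is exactly how part (2) is used.
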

\begin{proof}
For the first two claims, see \cite[II.6]{VigInd}. For the third, see \cite[II.10.1]{VigInd}
\end{proof}

Now we record some results about smooth admissible representations of $\GL_n(F_v)$ in characteristic 0.
Let $\p$ be a parahoric corresponding to the partition $n = n_1+\ldots+n_k$ which we call $\mu$, and let $P$ be the 
underlying parabolic with the Levi decomposition $P = MN$. Let $\m = M(\OO_{F_v})$.
We also let $\p_1, \m_1$ denote the kernels of the homomorphisms
\[\p \to P(k_v) \to \GL_{n_k}(F_v) \xrightarrow{\det} k_v^{\times} \to k_v^{\times}(p)\]
\[\m \to M(k_v) \to \GL_{n_k}(F_v) \xrightarrow{\det} k_v^{\times} \to k_v^{\times}(p).\]
 Finally, let $\Iw' = \p_1 \cap \Iw$. 

 \begin{lem}
    The condition in part (2) of \cref{prop:homplus} is satisfied for $\mathfrak{q} = \p, \p_1$.  
    \end{lem}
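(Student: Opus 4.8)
The plan is to verify the criterion in part (2) of \cref{prop:homplus} directly: exhibit, for each of the two parahoric levels $\mathfrak{q} = \p$ and $\mathfrak{q} = \p_1$, a totally positive element $a \in Z(M)$ such that $[\mathfrak{q}a\mathfrak{q}]$ is invertible in $\HH_{\OO}(G, \mathfrak{q})$. The natural candidate is a central element of $M$ whose double coset is as small as possible, namely a scalar matrix. Concretely, I would take $a = \varpi_v \cdot \mathrm{id}_n$, the image of a uniformizer under the central embedding $F_v^\times \hookrightarrow Z(\GL_n(F_v)) \subset Z(M)$. Since $a$ is central in all of $\GL_n(F_v)$, conjugation by $a$ is trivial, so $a$ certainly satisfies the (strict) positivity conditions relative to $P$; one checks this is consistent with the Iwahori decomposition of $\p$ (and of $\p_1$, which also contains the center of $\GL_n(\OO_{F_v})$). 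Thus $a$ lies in $Z(M)$ and is totally positive.

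The remaining point is invertibility of $[\mathfrak{q}a\mathfrak{q}]$ in $\HH_{\OO}(G,\mathfrak{q})$. Because $a$ normalizes $\mathfrak{q}$ (again by centrality), we have $\mathfrak{q}a\mathfrak{q} = a\mathfrak{q} = \mathfrak{q}a$ is a single left (and right) coset, so $[\mathfrak{q}a\mathfrak{q}]$ is supported on one coset and satisfies
\[
[\mathfrak{q}a\mathfrak{q}] \ast [\mathfrak{q}a^{-1}\mathfrak{q}] = [\mathfrak{q}]
\]
with respect to the Haar measure giving $\mathfrak{q}$ volume $1$; here $[\mathfrak{q}]$ is the unit of $\HH_{\OO}(G,\mathfrak{q})$. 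Hence $[\mathfrak{q}a\mathfrak{q}]$ is a unit with explicit inverse $[\mathfrak{q}a^{-1}\mathfrak{q}]$. For $\mathfrak{q} = \p_1$ one must additionally check that $a$ does normalize $\p_1$: this holds because the defining homomorphism $\p \to k_v^\times(p)$ used to cut out $\p_1$ is built from the determinant on the last block, and conjugation by the scalar $a$ acts trivially on this determinant, so $a\p_1 a^{-1} = \p_1$; moreover $a$ itself lies in $\p$ but we only need $a \in M$ totally positive, which we have. (If one prefers $a \in Z(M)$ not lying in $\p_1$, any central $\varpi_v$-power works equally well since the argument only uses that $a$ normalizes $\mathfrak{q}$ and is totally positive.)

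The only genuine subtlety — and the step I would expect to require the most care — is checking the strict-positivity/Iwahori-decomposition bookkeeping for $\p_1$ rather than $\p$: one must confirm that passing to the index-$p$-power subgroup $\p_1$ does not disturb the Iwahori decomposition with respect to $P$ (so that the framework of \cref{prop:homplus} applies at all), and that $Z(M)$ still contains a totally positive element normalizing $\p_1$. Both are routine once one notes that $\p_1 = \ker(\p \to k_v^\times(p))$ for a homomorphism factoring through a quotient on which the central torus acts through a fixed character, so $\p_1^{\pm} = \p^{\pm}$ (the unipotent parts are unchanged, as $k_v^\times(p)$ is abelian and the unipotent radicals map trivially) and $\p_{1,M} = \ker(\m \to k_v^\times(p))$ still has an Iwahori decomposition inherited from $\m$. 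With these compatibilities in hand the central element $a = \varpi_v \cdot \mathrm{id}_n$ witnesses the criterion for both $\mathfrak{q} = \p$ and $\mathfrak{q} = \p_1$, completing the proof.
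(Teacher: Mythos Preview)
Your proposal contains a genuine error at the very first step. The scalar $a = \varpi_v \cdot \mathrm{id}_n$ is \emph{not} strictly (``totally'') positive in the sense of $M^{++}$: precisely because conjugation by $a$ is trivial, we have $a^x \mathfrak{n}_1 a^{-x} = \mathfrak{n}_1$ for every $x$ and every compact open $\mathfrak{n}_1 \subset N$, so the contraction condition $a^x \mathfrak{n}_1 a^{-x} \subset \mathfrak{n}_2$ fails as soon as $\mathfrak{n}_2 \subsetneq \mathfrak{n}_1$. Since the parabolic $P$ here is proper, $N \neq 1$ and this failure is real. Being in $Z(G)$ makes $a$ trivially \emph{positive} ($a \in M^+$) but never strictly positive. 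So the element you exhibit does not witness the criterion in \cref{prop:homplus}(2).

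This is not merely a wrong choice that can be patched by picking a better $a$: once you take a genuinely strictly positive $a \in Z(M)$ (say $a = \diag(\varpi^{\lambda_1},\ldots,\varpi^{\lambda_n})$ with $\lambda$ constant on blocks and strictly decreasing across blocks), such $a$ is \emph{not} central in $G$, so $\mathfrak{q} a \mathfrak{q}$ is no longer a single coset and your one-line convolution argument $[\mathfrak{q} a \mathfrak{q}] \ast [\mathfrak{q} a^{-1} \mathfrak{q}] = [\mathfrak{q}]$ collapses. Invertibility of $[\p a \p]$ in $\HH_\OO(G,\p)$ is then a nontrivial statement. The paper handles it by passing through the Bernstein embedding $\OO[\Xi_v] \to \HH_\OO(G,\Iw)$, checking via the Bernstein relations that $\theta_\lambda$ commutes with the idempotent $e_P = [\p:\Iw]^{-1}[\p]$ for $\lambda \in \Xi_v^{S_\mu}$, and then observing that the invertible element $q_v^{l(\lambda)/2} X^\lambda$ maps to $[\p a \p]$; the $\p_1$ case is then deduced by showing $[\p:\p_1]^{-1}[\p]$ is central in $\HH_\OO(G,\p_1)$. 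None of this is visible in your proposal.
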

    \begin{proof}
    We will first treat the case of $\p$. Let $e_P = [\p : \Iw]^{-1}[\p]$ be the idempotent in $\HH_{\OO}(G, \Iw)$ corresponding to $\p$. If we identify $\HH_{\OO}(M, \m)$ with $\OO[\Xi_v]^{S_\mu}$, we get a homomorphism 

    \begin{equation}\HH_{\OO}(M, \m) \xhookrightarrow{} \OO[\Xi_v] \to \HH_{\OO}(G, \Iw)\end{equation}
    with the latter morphism coming from the Bernstein presentation for the Iwahori-Hecke algebra (cf. \cite{HKP}).

    For $\lambda \in \Xi_v$ let $\theta_\lambda$ be the image of $\lambda$ in $\HH_{\OO}(G, \Iw)$ under the Bernstein embedding. 
    Let us show that $[\p]$ commutes with $\theta_\lambda$ for any $\lambda \in \Xi_v^{S_\mu}$. Using \cite[Prop.~3.1]{LanskyDec} we can write \[[\p] = \sum_{w \in S_\mu} T_w\] where $T_w = [\Iw w\Iw]$. Thus it is enough to check that $\theta_\lambda$ commutes with $T_w$. The Iwahori-Matsumoto presentation lets us write each $T_w$ as a $\prod T_{s_\alpha}$, where $s_{\alpha}$ are the simple reflections generating $S_\mu \subset W$. Hence we may restrict to $w = s_\alpha$.
    
    The relations of the Bernstein presentation tell us that 
    \[T_{s_\alpha}\theta_\lambda = \theta_\lambda T_{s_\alpha} + (q_v-1)\frac{\theta_{s_\alpha(\lambda)} - \theta_\lambda}{1 - \theta_{-\alpha^\vee}} \]
    and since $s_\alpha(\lambda) = \lambda$ we are done. We therefore get an algebra homomorphism 
    \[\OO[\Xi_v^{S_\mu}] \xrightarrow{- \cdot e_P} \HH_{\OO}(G, \p).\]
    Here we have identified $\HH_{\OO}(G, \p)$ with $e_P\HH_{\OO}(G, \Iw)e_P$. Define a dominant cocharacter $\lambda = (\lambda_1,\ldots, \lambda_n)$ given by $\lambda_i = m - j$ for\\ $s_{\mu, j-1} < i \leq s_{\mu, j}$. If we take $l$ to be the length function on the extended affine Weyl group, then $\OO[\Xi_v^{S_\mu}]$ contains an invertible element $q_v^{l(\lambda)/2} X^\lambda$, where $X^\lambda$ is the element associated to $\lambda$ in $\OO[\Xi_v^{S_\mu}]$. The image of $q_v^{l(\lambda)/2} X^\lambda$ in $\HH_{\OO}(G, \p)$ is the double coset $[\p\diag(\varpi^{\lambda_1}, \ldots, \varpi^{\lambda_n})\p]$, and we are done with the case of $\p$. 
    
    In the case of $\p_1$, we will show note that $[\p : \p_1]^{-1}[\p]$ is central in $\HH_\OO(G, \p_1)$. We can write \[[\p] = \sum_{[M_i] \in \p/\p_1} [\p_1 M_i \p_1],\] so it is enough to choose representatives $M_i$ for $\p/\p_1 \cong k_v^\times(p)$ such that each $[\p_1 M_i \p_1]$ is central in $\HH_\OO(G, \p_1)$. Since $k_v^\times(p)$ has power-$p$ order every element in it is a $n_k$-th power, which means we can choose $M_i$ to be diagonal. The map $\HH_\OO(G, \p) \to \HH_\OO(G, \p_1)$ defined by $[\p : \p_1]^{-1}[\p]$ is therefore an algebra homomorphism, and the result follows from the $\p$ case.
\end{proof}

Fix a uniformizer $\varpi_c$ of $F_v$. For any  $1 \leq j \leq k$ and $1 \leq i \leq n_j$, let \[V^{i,j} = t_\mu(\Sat_{\mu}^{-1}(e_i(X_{s_{\mu, j-1}+1}, \ldots ,X_{s_{\mu,j}}))).\] We will also consider operators in $\HH(G, \p_1)$ such that their actions on $\pi^{\p} \subset \pi^{\p_1}$ agree with the action of $V^{i,j}$ for any smooth representation $\pi$. They can be constructed in the same way as $V^{i,j}$ above by replacing $S_{\mu}$ with the appropriate version of the Satake isomorphism for $\m_1$, to be introduced in the forthcoming work of Dmitri Whitmore. These operators will also be denoted $V^{i,j}$. We also define operators $T^{i,j}$ representing the images of the same elements under $\Sat_{\mu}^{-1}$ in $\HH(M, \m)$ and the corresponding operators on $\HH(M, \m_1)$.

The following lemmas are straightforward generalizations of the lemmas in \cite[Section 5]{Thorne12}. Given a parabolic subgroup $Q$
of $\GL_n(F_v)$, we write $W_Q \subset W$ for the Weyl group of its Levi factor. Recall from \cite{Cas} that the space $W_Q \backslash W / W_P$ has a canonical set of representatives $[W_Q \backslash W / W_P]$, 
consisting of minimal length elements from each double coset.
\begin{lem}\label{lem:partitions}
    Let $Q$ be a parabolic corresponding to the partition $n = m_1 + \ldots + m_r$. Then $[W_Q \backslash W / W_P]$
    is isomorphic to the set of partitions \[m_i = n_1^i + \ldots + n_k^i, 1 \leq i \leq r\]
    such that \[\sum_i n_j^i = n_j \text{ for all } 1 \leq j \leq k.\]
\end{lem}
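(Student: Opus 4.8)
The plan is to identify $W_Q\backslash W/W_P$ with a set of combinatorial data directly, using the standard "block structure" description of double cosets in symmetric groups. Recall $W = S_n$, $W_P = S_\mu = S_{n_1}\times\cdots\times S_{n_k}$ is the Young subgroup attached to the ordered partition $\mu\colon n = n_1+\cdots+n_k$, and $W_Q = S_{m_1}\times\cdots\times S_{m_r}$ is the Young subgroup for the partition $n = m_1+\cdots+m_r$. Think of $\{1,\dots,n\}$ as being partitioned into $r$ consecutive "row blocks" $R_1,\dots,R_r$ (with $|R_i| = m_i$) by $Q$, and into $k$ consecutive "column blocks" $C_1,\dots,C_k$ (with $|C_j| = n_j$) by $P$. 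A permutation $w\in S_n$ determines, for each pair $(i,j)$, the integer $n_j^i := \#\{\,\ell : \ell\in R_i,\ w(\ell)\in C_j\,\}$, i.e. the size of the overlap of the $i$-th row block with the $w$-preimage of the $j$-th column block. These satisfy $\sum_j n_j^i = m_i$ and $\sum_i n_j^i = n_j$ automatically.

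First I would observe that the matrix $(n_j^i)$ is a complete invariant of the double coset $W_Q w W_P$: left multiplication by $W_Q$ permutes elements within each row block, and right multiplication by $W_P$ permutes elements within each column block, neither of which changes any overlap count. Conversely, given a nonnegative integer matrix $(n_j^i)$ with the prescribed row sums $m_i$ and column sums $n_j$, one builds an explicit $w$ realizing it (fill in the row blocks left-to-right, sending the appropriate number of indices into each column block in order), and any two choices differ by an element of $W_Q$ on the left and $W_P$ on the right. Hence $w\mapsto (n_j^i)$ is a bijection from $W_Q\backslash W/W_P$ onto the set of such matrices — equivalently, onto the set of systems of partitions $m_i = n_1^i+\cdots+n_k^i$ ($1\le i\le r$) with $\sum_i n_j^i = n_j$ for all $j$, which is exactly the asserted description. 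This also matches Casselman's minimal-length representatives $[W_Q\backslash W/W_P]$ from \cite{Cas}: the minimal-length element in each double coset is precisely the "order-preserving on each block" permutation attached to the matrix, so the canonical set of representatives is in bijection with the matrices as well.

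The only mild subtlety — and the step I would be most careful about — is bookkeeping the two-sided quotient correctly: one must check that the overlap matrix is genuinely invariant under the $W_Q$–$W_P$ action on \emph{both} sides simultaneously (not just one), and that distinct matrices give distinct double cosets, which is where one invokes that a permutation preserving all row blocks and all column blocks setwise lies in $W_Q \cap (\text{something})$ — concretely, that if $w$ and $w'$ have the same overlap matrix then $w' = \sigma w \tau$ with $\sigma\in W_Q$, $\tau\in W_P$, proved by induction on the blocks. This is entirely elementary combinatorics of $S_n$, so there is no serious obstacle; the rest of the lemma is just unwinding the definition of the overlap numbers into the stated system of partitions.
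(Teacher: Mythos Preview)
Your argument is correct: the overlap matrix $(n_j^i)$ is a complete invariant of the double coset $W_Q w W_P$, and this is exactly the standard combinatorics of double cosets of Young subgroups in $S_n$. The paper does not actually give its own proof of this lemma; it simply declares it (together with the neighbouring lemmas) to be a ``straightforward generalization of the lemmas in \cite[Section 5]{Thorne12}''. So you are supplying explicitly what the paper leaves to a reference, and your approach is the natural one (and is essentially what underlies the cited result as well).
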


With $Q$ as in the last lemma, let $L_i$ denote the $i$-th component of the corresponding Levi subgroup. 
For $w \in [W_Q \backslash W / W_P]$ corresponding to the partition $n_1^i + \ldots + n_k^i$, 
let $\p_i^w$ denote the parahoric subgroup of $L_i$ corresponding to this partition, and let 
$\p_{i,1}^w$ be the kernel of 
\[\p_i^w \to \GL_{n_k^i}(F_v)  \xrightarrow{\det} k_v^{\times} \to k_v^{\times}(p)\]
Let $\mathfrak{q}$ be the parahoric corresponding to the partition $\{n_1^1, \ldots, n_k^1, n_1^2, \ldots, n_k^r\}$, 
and let $\n$ be the hyperspecial maximal compact of the corresponding Levi subgroup. 
We define $\p_1^w$ as a subgroup of $\mathfrak{q}$ define by the conditions $\im(\det N_k^j \to k_v^{\times}(p)) = 1$
for all $j$ where $N_k^j$ is the block corresponding to $n_k^j$.

\begin{lem}\label{lem:par_breakdown}
    For each $1 \leq i \leq r$ let $\pi_i$ be a smooth representation of $L_i$. Then 
    \begin{enumerate}
        \item For any $w \in  [W_Q \backslash W / W_P]$ we have $L_i \cap w\p w^{-1} = \p_i^{w}$. 
        \item For any $w \in  [W_Q \backslash W / W_P]$ we have $Q \cap w\p_1 w^{-1} \supset \p_1^w$. 
        \item \[(\ind_Q^G \pi_1 \otimes \ldots \otimes \pi_r)^{\p} \cong \bigoplus_{w \in [W_Q \backslash W / W_P]} \pi_1^{\p_1^w} \otimes \ldots \otimes \pi_r^{\p_r^w}.\]
        \item \[(\ind_Q^G \pi_1 \otimes \ldots \otimes \pi_r)^{\p_1} \subset \bigoplus_{w \in [W_Q \backslash W / W_P]} \pi_1^{\p_{1,1}^w} \otimes \ldots \otimes \pi_r^{\p_{r,1}^w}.\]
    \end{enumerate}
\end{lem}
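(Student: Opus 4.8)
The plan is to prove all four parts by working with the Bruhat decomposition $G = \bigsqcup_{w \in [W_Q\backslash W/W_P]} QwP$ and the standard analysis of $\p$-fixed vectors in an induced representation, following the template of \cite[Section~5]{Thorne12} and \cite{Cas} but keeping careful track of the $p$-power-determinant conditions that define $\p_1$ and the $\p_{i,1}^w$. For part (1): by \cref{lem:partitions} the double coset of $w$ records exactly how the partition $\mu = (n_1,\ldots,n_k)$ of $P$ refines against the blocks $m_1,\ldots,m_r$ of $Q$, namely $m_i = n_1^i + \cdots + n_k^i$. Intersecting $L_i$ (the $i$-th block of the Levi of $Q$) with the conjugate $w\p w^{-1}$ of the parahoric of $P$, one computes directly on $\OO_{F_v}$-points that this intersection is the parahoric of $L_i$ attached to the refined partition $(n_1^i,\ldots,n_k^i)$; this is a blockwise calculation using that $w$ is the minimal-length representative, so conjugation by $w$ carries the standard parahoric structure to the standard one on each Levi block. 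Part (2) is then the analogous statement for the determinant condition: the subgroup $\p_1$ is cut out inside $\p$ by the single condition that the determinant of the last ($n_k$-sized) block lands in the kernel of $k_v^\times \to k_v^\times(p)$, and $\p_1^w$ is cut out inside $\mathfrak{q}$ by imposing the analogue on each of the blocks $N_k^j$; the containment $Q \cap w\p_1 w^{-1} \supset \p_1^w$ follows because an element of $\p_1^w$, viewed after conjugating back by $w^{-1}$, satisfies enough blockwise determinant constraints to force the single global constraint defining $\p_1$. (One only gets $\supset$, not equality, because $Q \cap w\p_1 w^{-1}$ may only see the product of the relevant block determinants rather than each individually.)

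For part (3): by the Bruhat decomposition, a function $f \in \ind_Q^G \pi_1 \otimes \cdots \otimes \pi_r$ is determined by its values on the representatives $w \in [W_Q\backslash W/W_P]$, and $\p$-invariance of $f$ translates, on each piece, into the condition that $f(w)$ is fixed by $L_i \cap w\p w^{-1} = \p_i^w$ in the $i$-th factor (using part (1) and that the modulus character $\delta_Q$ is trivial on the relevant compact groups, since we are using un-normalized induction and $\delta_Q$ takes values in $q_v^{\Z}$). Summing over $w$ and over the blocks gives the direct sum decomposition $\bigoplus_w \pi_1^{\p_1^w} \otimes \cdots \otimes \pi_r^{\p_r^w}$; one checks that every such tuple of vectors does extend to a well-defined $\p$-invariant function by the usual support argument, i.e. the cells $QwP$ are open and closed in a way compatible with this reconstruction. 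Part (4) is the same computation with $\p_1$ in place of $\p$: $\p_1$-invariance of $f$ forces $f(w)$ to be invariant under $Q \cap w\p_1 w^{-1}$, and by part (2) this group contains $\p_1^w$, whence $f(w)$ lies in $\pi_1^{\p_{1,1}^w} \otimes \cdots \otimes \pi_r^{\p_{r,1}^w}$ after unwinding which blockwise determinant conditions $\p_1^w$ imposes on each Levi factor $L_i$; because we only have a containment of groups we only get an inclusion of spaces, not an isomorphism, which is exactly what is claimed.

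I expect the main obstacle to be part (2), and correspondingly the inclusion direction in part (4): tracking precisely how the single determinant condition defining $\p_1$ (on the last block of the $P$-partition) interacts with conjugation by the minimal-length $w$ and with the product structure of the Levi of $Q$. The bookkeeping of which block $N_k^j$ corresponds to which $n_k^i$, and verifying that imposing the $p$-power-determinant condition on all of them is enough to recover (a superset of) $w\p_1 w^{-1} \cap Q$, is where one has to be careful; the fact that $k_v^\times(p)$ has $p$-power order, so that every element is an $n_k$-th power (as used in the preceding lemma), will again be the key input allowing diagonal representatives and reducing the determinant conditions to a manageable form. The rest — parts (1) and (3) — is a routine adaptation of the Iwahori-level arguments in \cite{Thorne12} and the double-coset formalism of \cite{Cas}, and I would treat it briskly.
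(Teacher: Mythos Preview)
Your proposal is correct and matches the paper's approach: the paper gives no proof of this lemma, simply declaring it (together with \cref{lem:partitions}) a ``straightforward generalization of the lemmas in \cite[Section~5]{Thorne12}'', and your plan is precisely to carry out that generalization via the Bruhat decomposition and the double-coset formalism of \cite{Cas}. One small slip: the paper uses normalized induction $\ind$, not un-normalized, but since $\lvert\delta_Q\rvert^{1/2}$ is trivial on the compact open subgroups in question this has no effect on the computation of $\p$- or $\p_1$-invariants, so your argument goes through unchanged.
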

Let $\pi$ be an irreducible admissible admissible representation of $G$ such that $\pi^{\p_1} \neq 0$. Since $\Iw' \subset \p_1$, supercuspidal
support of $\pi$ consists of tamely ramified characters. We will now use the Bernstein-Zelevinsky classification \cite{BZ}, following the conventions of \cite{Rodier} as they are best suited for applications to local Langlands correspondence. We can write $\pi$ as a quotient of 
\[\ind_Q^G \Sp_{k_1}(\chi_1) \otimes \ldots \otimes \Sp_{k_r}(\chi_r)\] where $\Sp_m(\chi)$ for a tamely ramified character $\chi : F_v^\times \to \C^\times$
is the unique irreducible quotient of $\ind_{B}^{\GL_n} \chi \otimes \chi\lvert \, \cdot \, \rvert \otimes \ldots \otimes \chi \lvert \, \cdot \, \rvert^{n-1}$. 
The twisted Steinberg factors $\Sp_{k_i}(\chi_i)$ correspond to Zelevinsky segments $\Delta_i = (\chi, \chi(1), \ldots, \chi(k_i-1))$.

Let $\mathcal{A}$ index the partitions of $sc(\pi)$ into $k$ labeled subsets $S_1, \ldots, S_k$ 
satisfying the following conditions:
\begin{itemize}
    \item $\lvert S_i \rvert = n_i$ for all $i$.
    \item characters from the same Zelevinsky segment always belong to different subsets.
    \item if $\chi \in S_i, \chi' \in S_j$ share a segment and $\chi' = \chi(a)$ for $a > 0$, then $i < j$. 
\end{itemize}
For each partition $\alpha \in \mathcal{A}$, let $r(\alpha)$ be the representation of $T(F)$ given by tensoring the characters of $sc(\pi)$ 
in the following order: characters in $S_i$ precede characters in $S_j$ when $i < j$, and the ordering of characters within each $S_i$ is induced 
by the ordering of Zelevinsky segments. 

\begin{lem}\label{lem:N_coinv}
    For each $1 \leq i \leq r$ let $\pi_i$ be a smooth representation of $L_i$. Then 
    \[(\ind_Q^G \pi_1 \otimes \ldots \otimes \pi_r)_N^{ss} = \bigoplus_{w \in [W_Q \backslash W / W_P]} \ind_{w^{-1}Qw \cap M}^M w^{-1}(\pi_1 \otimes \ldots \otimes \pi_r)_{L \cap wNw^{-1}}\]
\end{lem}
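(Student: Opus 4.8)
The plan is to compute the Jacquet module (space of $N$-coinvariants) of the induced representation $\ind_Q^G \pi_1 \otimes \ldots \otimes \pi_r$ by means of the geometric lemma of Bernstein--Zelevinsky, and then to identify its semisimplification with the stated direct sum. First I would recall that for any two parabolics $P = MN$ and $Q = LN_Q$ of $G = \GL_n(F_v)$ with standard Levi factors, the functor $r_P^G \circ \ind_Q^G$ admits a filtration whose graded pieces are indexed by the double coset space $W_P \backslash W / W_Q$, with the piece attached to $w$ given by
\[
\ind_{M \cap w Q w^{-1}}^M \; w\bigl( r_{w^{-1}Pw \cap L}^{L}(-) \bigr),
\]
applied to $\pi_1 \otimes \ldots \otimes \pi_r$; this is \cite[2.12]{BZ}, and taking semisimplifications turns the filtration into an honest direct sum. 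Substituting $r_P^G = (-)_N$ and matching indices — using that $[W_Q \backslash W / W_P]$ is in bijection with $[W_P \backslash W / W_Q]$ via $w \mapsto w^{-1}$, and that the minimal-length representatives from \cite{Cas} make the intersections $w^{-1}Qw \cap M$, $L \cap wNw^{-1}$ actual parabolic (resp.\ their unipotent radicals) subgroups of the relevant Levis — gives exactly the right-hand side of the displayed formula.

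The key steps, in order, are: (1) state the geometric lemma in the exact normalization used here (un-normalized induction $\ind$ as defined in the Notation section, so one must be slightly careful that the modulus characters $\delta$ that appear in the normalized version are absorbed consistently — since the statement is about $\ind$ on both sides this works out, but it is worth a remark); (2) pass to semisimplifications so the Bernstein--Zelevinsky filtration splits, justified because the category of smooth representations of finite length has Grothendieck group additive in short exact sequences; (3) reindex the double cosets by $w \mapsto w^{-1}$ and invoke the minimal-length representative description of \cite{Cas} to rewrite each graded piece in the form $\ind_{w^{-1}Qw \cap M}^M w^{-1}(\pi_1 \otimes \ldots \otimes \pi_r)_{L \cap wNw^{-1}}$, noting that $w^{-1}$ carries the standard Levi data of $Q$ to the subgroups appearing; (4) check that the Jacquet functor $(-)_{L \cap wNw^{-1}}$ applied to an external tensor product $\pi_1 \otimes \ldots \otimes \pi_r$ decomposes compatibly with the block structure of $L$, so that the notation on the right-hand side is unambiguous.

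The main obstacle I expect is purely bookkeeping: matching the combinatorics of \cref{lem:partitions} and the minimal-length coset representatives with the block decomposition of $L$ and $M$, so that the parabolic $w^{-1}Qw \cap M$ and the unipotent $L \cap wNw^{-1}$ are the ones intended and the twisted representation $w^{-1}(\cdots)$ is well-defined up to the usual identifications. There is also the mild subtlety, already flagged for its characteristic-$p$ analogue in \cref{prop:Iw1_same_Iw} and the surrounding discussion, that one should work with the Bernstein--Zelevinsky conventions of \cite{Rodier} rather than \cite{BZ} directly, to keep compatibility with the local Langlands normalization used later; but since this lemma only concerns the representation-theoretic side, the translation is routine. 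Everything else — exactness of Jacquet functors, the geometric lemma itself — is quoted from \cite{BZ} and \cite{Cas}, so no new input is required beyond careful indexing.
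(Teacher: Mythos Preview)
Your proposal is correct and matches the paper's treatment: the paper does not prove this lemma, grouping it among ``straightforward generalizations of the lemmas in \cite[Section 5]{Thorne12}'', and the content is precisely the Bernstein--Zelevinsky geometric lemma \cite[2.12]{BZ} that you invoke. One minor slip: in the paper's Notation section $\ind$ denotes \emph{normalized} induction (with $\text{Ind}$ the un-normalized one), so your parenthetical about modulus characters should be adjusted accordingly, though this does not affect the substance of the argument.
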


\begin{lem}\label{lem:steinberg_eigenvalues}
    Let $\pi$ be an irreducible admissible $GL_{n}(F_v)$-module such that $\pi^{\p_1} \neq 0$. Consider $\pi^{\p_1}$ as a module $\Z[\Xi_v]^{S_\mu}$-module via the map $t_\mu \circ \Sat_{\mu}^{-1}$. Then $(\pi^{\p_1})^{ss}$ is a direct sum of 1-dimensional submodules indexed by a subset of $\mathcal{A}$. For a finite set 
    $S$ of characters and positive integer $k \leq \lvert S\rvert$, let $e_k(S(\varpi))$ denote the $k$-th symmetric polynomial of elements of $S$ evaluated at $\varpi$. Then on the component associated to $(S_1, \ldots, S_k) \in \mathcal{A}$ the action of $V^{i,j}$ is given by $e_i(S_j)$ for all $1 \leq i \leq n_j$.
\end{lem}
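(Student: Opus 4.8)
The plan is to build on Lemma~\ref{lem:par_breakdown} and Lemma~\ref{lem:N_coinv} to reduce the computation of $V^{i,j}$ on $\pi^{\p_1}$ to a computation on the Jacquet module $\pi_N$, where the action is essentially by the Satake parameters grouped by blocks. First I would write $\pi$ as a quotient of $\sigma \coloneqq \ind_Q^G \Sp_{k_1}(\chi_1) \otimes \ldots \otimes \Sp_{k_r}(\chi_r)$ as in the setup preceding the lemma. By part~(4) of Lemma~\ref{lem:par_breakdown}, $\pi^{\p_1}$ embeds into a direct sum of spaces $\Sp_{k_1}(\chi_1)^{\p_{1,1}^w} \otimes \cdots \otimes \Sp_{k_r}(\chi_r)^{\p_{r,1}^w}$ indexed by $w \in [W_Q \backslash W / W_P]$; by Lemma~\ref{lem:partitions} these $w$ correspond exactly to partitions of each $k_i$-segment into the $k$ blocks, i.e.\ to data refining the combinatorial set $\mathcal{A}$. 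The key point is that each twisted Steinberg $\Sp_{k}(\chi)$ has a one-dimensional space of invariants under a parahoric of type $(1,1,\ldots,1)$ (an Iwahori-type subgroup), and more generally its invariants under a parahoric $\p^w_{i}$ are nonzero only when the partition is into parts of size $\le 1$ in the relevant sense — this forces the surviving $w$ to be precisely those for which characters from a common Zelevinsky segment land in distinct blocks, which is the second bullet defining $\mathcal{A}$, and the third bullet (ordering within a segment) pins down which of the a priori several $w$ actually contributes. This identifies $(\pi^{\p_1})^{ss}$ with a direct sum of one-dimensional pieces indexed by a subset of $\mathcal{A}$.

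Next I would compute the action of $V^{i,j}$. By construction $V^{i,j} = t_\mu(\Sat_\mu^{-1}(e_i(X_{s_{\mu,j-1}+1}, \ldots, X_{s_{\mu,j}})))$, and by part~(3) of Proposition~\ref{prop:homplus} the canonical map $\pi^{\p} \to \pi_N^{\m}$ (and its $\p_1$-variant, using the forthcoming Satake isomorphism for $\m_1$) intertwines the $V^{i,j}$-action on the source with the $T^{i,j}$-action on $\pi_N$, where $T^{i,j}$ is the honest Hecke operator $e_i(X_{s_{\mu,j-1}+1},\ldots,X_{s_{\mu,j}})$ on the $j$-th Levi block. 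So it suffices to evaluate $T^{i,j}$ on the relevant one-dimensional constituents of $\pi_N^{ss}$. Applying Lemma~\ref{lem:N_coinv} to $\sigma$, the Jacquet module $\sigma_N^{ss}$ decomposes over the same index set $[W_Q\backslash W/W_P]$, and on the constituent labelled by $(S_1,\ldots,S_k)$ the $j$-th block of the Levi $M$ acts (after semisimplification) through the unramified twist of the product of the characters in $S_j$ — their restriction to $M(\OO_{F_v})$ being trivial, the Satake operator $e_i(X)$ on that block acts by the scalar $e_i(S_j(\varpi))$, the $i$-th elementary symmetric function of the $\chi(\varpi)$ for $\chi \in S_j$. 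Pulling this back through the intertwining map gives the claimed formula $V^{i,j} \mapsto e_i(S_j)$ on the component indexed by $(S_1,\ldots,S_k)$.

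The step I expect to be the main obstacle is the bookkeeping of the Iwahori-decomposition and positivity hypotheses needed to invoke Proposition~\ref{prop:homplus}(3) at level $\p_1$ rather than $\p$: the $V^{i,j}$ on $\HH(G,\p_1)$ are defined only via ``the appropriate version of the Satake isomorphism for $\m_1$'' from as-yet-unpublished work, so I would instead argue that on $\pi^{\p} \subset \pi^{\p_1}$ the two actions agree (this is how the $\p_1$-operators were characterized in the text), reduce the semisimplified computation to the $\p$-level where everything is in place, and check that passing to $\pi^{\p_1}$ only enlarges the space by tamely-ramified twists that do not change the $V^{i,j}$-eigenvalues after semisimplification — using that $\Iw' \subset \p_1$ forces the supercuspidal support to be tamely ramified, as already noted. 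A secondary subtlety is verifying that the parahoric-invariants of a twisted Steinberg vanish for the ``wrong'' partitions; this is a direct computation with $\ind_B^{\GL_k}\chi \otimes \chi|\cdot| \otimes \cdots$ and its unique irreducible quotient, and is where the precise shape of the three conditions defining $\mathcal{A}$ gets matched to the geometry of $[W_Q\backslash W/W_P]$ via Lemma~\ref{lem:partitions}.
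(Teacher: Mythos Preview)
Your proposal is correct and follows essentially the same route as the paper: both surject from the standard module, apply the Geometric Lemma (Lemma~\ref{lem:N_coinv}) to the Jacquet module, and reduce to showing that twisted Steinbergs have no $\p_{i,1}^w$-invariants when the associated partition has a block of size $>1$ --- precisely your ``secondary subtlety,'' which the paper carries out in detail by a case split on whether $\p_{i,1}^w = \GL_{m_i}(\OO_{F_v})$ or the diagonal of $\Iw_i'$ retains a full $\OO_{F_v}^\times$-factor. The paper works directly at level $\p_1$ throughout via the Jacquet module, so your worry about reducing to level $\p$ (your ``main obstacle'') turns out to be unnecessary.
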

\begin{proof}
    We have a surjection \[\ind_Q^G \Sp_{k_1}(\chi_1) \otimes \ldots \otimes \Sp_{k_r}(\chi_r) \twoheadrightarrow \pi,\]
    and the induced map 
    \[(\ind_Q^G \Sp_{k_1}(\chi_1) \otimes \ldots \otimes \Sp_{k_r}(\chi_r))^{\p_1} \rightarrow \pi^{\p_1}\]
    is also surjective.
    By \cref{lem:N_coinv} we can write 
    \begin{align*}& (\ind_Q^G \Sp_{k_1}(\chi_1) \otimes \ldots \otimes \Sp_{k_r}(\chi_r))_N^{ss} = \\ & \sigma \oplus \bigoplus_{(S_1, \ldots, S_k) \in \mathcal{A}}
    \ind_{B \cap M}^M \left(\bigotimes_{\psi_1 \in S_1} \psi_1 \otimes \ldots \otimes \bigotimes_{\psi_k\in S_k} \psi_k \right).\end{align*}
    Here the summands indexed by $\mathcal{A}$ correspond to $w \in [W_Q \backslash W / W_P]$ represented by partitions $\{n^i_j\}$ satisfying $n^i_j \leq 1$ for all $i,j$ (cf. \cref{lem:partitions}), and $\sigma$ represents all other summands. We will now show that $\sigma$ does not have $\m_1$-invariants. 
    Let $\m_{i,1}^w \subset \p_{i,1}^w$ be the subgroups of the Levi subgroup of $L_i$ defined analogously to $\p_{i,1}^w$.

    Suppose $\sigma^{\m_1}$ is nonzero. Let $\theta$ be a representation of $GL_{n^i_j}(F_v)$ which is a tensor factor of $(\Sp_{k_1}(\chi_1) \otimes \ldots \otimes \Sp_{k_r}(\chi_r))_{L \cap wNw^{-1}}$ for some $w \in [W_Q \backslash W / W_P]$ contributing to $\sigma$. Then $\theta$ has to be spherical if $j < k$, and has to have a fixed vector by $\ker(GL_{n^i_j}(\OO_{F_v}) \to GL_{n^i_j}(k_v) \xrightarrow{\det} k_v^\times \to k_v^\times(p))$ if $j = k$. This would imply that $\Sp_{k_i}(\chi_i)^{\p_{i,1}^w} \neq 0$ for all $1 \leq i \leq r$ and all $w$ representing partitions $m_i = n_1^i +\ldots+ n_k^i$ such that there exists at least one $1 \leq i \leq r$ for which $k_i > 1$ and $n_j^i > 1$ for some $1 \leq j \leq k$. To get a contradiction it is therefore enough to show that $\Sp_{k_i}(\chi_i)^{\p_{i,1}^w} = 0$.

    Define the subgroup $\Iw_i^' \subset {\p_{i,1}^w}$ to be a subgroup of the $L_i$-Iwahori with $1$'s $\mod \varpi$
    on the diagonal at indices $n_{k-1}^i+1$ through $n_k^i$. There are two possibilities: either $\p_{i,1}^w = \GL_{m_i}(\OO_{F_v})$, 
    or $\Iw_i^'$ has at least one $* \mod \varpi$ on the diagonal. In the former case we are done since $\Sp_{k_i}(\chi_i)$ is never spherical.
    In the latter case, let $\mathfrak{t}'$ be the diagonal component of $\Iw_i^'$. Then 
    \[\Sp_{k_i}(\chi_i)^{\Iw_i^'} = \Sp_{k_i}(\chi_i)_U^{\mathfrak{t}'} = (\chi_i \otimes \ldots \otimes \chi_i\lvert \, \cdot \, \rvert^{k_i-1})^{\mathfrak{t}'}\]
    where $U$ is the unipotent radical of the Borel. Since $\mathfrak{t}'$ has at least one $\OO_{F_v}^{\times}$ factor, if this is nonzero $\chi_i$ must be unramified. But in this case any $\p_{i,1}^w$-fixed vector
    would be automatically fixed by the parahoric $\p_i^w$, which properly contains the Iwahori and hence does not fix any vector in $\Sp_{k_i}(\chi_i)$.
\end{proof}

For a partition $n = n_1+\ldots+n_k$ which we call $\mu$, define elements
\[P_{\mu, i} = \prod \limits_{j = s_{\mu,i-1}+1}^{s_{\mu,i}} (T - X_j)\] 
\[\res_{\mu} = \prod_{i < j} \res( P_{\mu,i}, P_{\mu,j}) \in \Z[\Xi_v]^{S_\mu}\]
\[\res_{q_v, \mu} = \prod_{i < j} \res(P_{\mu,i}(q_vT), P_{\mu,j}) \in \Z[\Xi_v]^{S_\mu}\]
Then there exist unique polynomials $Q_{\mu, i} \in \Z[\Xi_v]^{S_\mu}[T]$ such that $\deg Q_i < \mu_i$ and 
\[\sum \limits_{i=1}^n Q_{\mu,i}\prod_{j \neq i} P_{\mu, j} = \res_\mu.\] Define 
\[E_{\mu, i} = Q_{\mu,i} \prod_{j \neq i} P_{\mu, j}.\]
The following statement is elementary.
\begin{lem}
    Take any $A \in M_n(\C)$ with a factorization \[\det(T - A) = \prod \limits_{i=1}^k p_{\mu, i}(T)\] where $p_{\mu, i} \in \C[T]$ are pairwise coprime. Consider the homomorphism $\varphi : \Z[\Xi_v]^{S_\mu} \to \C$ defined by the polynomials $p_{\mu, i}$. By this we mean the homomorphism sending $e_j(X_{s_{\mu,i-1}+1}, \ldots, X_{s_{\mu,i}})$ to $(-1)^j$ times the coefficient of $p_{\mu, i}$ by $T^j$. This homomorphism can be extended to $\varphi : \Z[\Xi_v]^{S_\mu}[T, \res_\mu^{-1}] \to \C[T]$. Then $\varphi(E_{\mu, i}/\res_\mu)(A)$ projects $\C^n$ onto the sum of generalized eigenspaces of $A$ corresponding to the roots of $p_{\mu, i}$.
\end{lem}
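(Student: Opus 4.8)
The plan is to push the formal B\'ezout identity $\sum_i E_{\mu,i}/\res_\mu = 1$ of $\Z[\Xi_v]^{S_\mu}[T,\res_\mu^{-1}]$ through $\varphi$ to obtain a concrete partial-fraction identity in $\C[T]$, and then compare it with the primary decomposition of $\C^n$ under $A$. First I would check that, extended coefficientwise in $T$, the map $\varphi$ sends $P_{\mu,i}(T) = \prod_{j=s_{\mu,i-1}+1}^{s_{\mu,i}}(T-X_j)$ to $p_{\mu,i}(T)$: this is immediate from the definition of $\varphi$ once one expands $P_{\mu,i}$ in terms of the elementary symmetric functions $e_j(X_{s_{\mu,i-1}+1},\dots,X_{s_{\mu,i}})$. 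Since the resultant of two polynomials is a universal polynomial in their coefficients that is compatible with any ring homomorphism preserving degrees, and the $P_{\mu,i}$ are monic in $T$, it follows that $\varphi(\res(P_{\mu,i},P_{\mu,j})) = \res(p_{\mu,i},p_{\mu,j})$, hence $\varphi(\res_\mu) = \prod_{i<j}\res(p_{\mu,i},p_{\mu,j})$; by the pairwise coprimality hypothesis this is a nonzero constant $r\in\C^\times$, which is exactly what makes the extension $\varphi:\Z[\Xi_v]^{S_\mu}[T,\res_\mu^{-1}]\to\C[T]$ well defined. Writing $f_i(T) := \varphi(E_{\mu,i}/\res_\mu)(T) = r^{-1}\,\varphi(Q_{\mu,i})(T)\prod_{l\neq i}p_{\mu,l}(T)$, applying $\varphi$ to $\sum_i E_{\mu,i} = \res_\mu$ gives $\sum_i f_i(T) = 1$ in $\C[T]$.

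Next I would introduce the primary decomposition $\C^n = \bigoplus_{i=1}^k V_i$, where $V_i$ is the span of the generalized eigenspaces of $A$ attached to the eigenvalues that are roots of $p_{\mu,i}$; this is a direct sum decomposition into $A$-stable subspaces, and it exhausts $\C^n$ because every eigenvalue of $A$ is a root of $\det(T-A)=\prod_i p_{\mu,i}$ and therefore of exactly one $p_{\mu,i}$, again by pairwise coprimality. Since the algebraic multiplicity of a root $\lambda$ of $p_{\mu,i}$ equals its multiplicity in $p_{\mu,i}$, the operator $(A-\lambda)^{\mathrm{mult}_\lambda p_{\mu,i}}$ annihilates the $\lambda$-generalized eigenspace, so $p_{\mu,i}(A)$ kills $V_i$. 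From the displayed formula for $f_i$, the operator $f_i(A)$ has $p_{\mu,j}(A)$ as a factor for every $j\neq i$, hence $f_i(A)$ vanishes on each $V_j$ with $j\neq i$ and carries $V_i$ into itself; combined with $\sum_i f_i(A)=\mathrm{id}$ this forces $f_i(A)$ to act as the identity on $V_i$. Therefore $f_i(A)=\varphi(E_{\mu,i}/\res_\mu)(A)$ is the projection onto $V_i$ with kernel $\bigoplus_{j\neq i}V_j$, which is exactly the sum of generalized eigenspaces of $A$ corresponding to the roots of $p_{\mu,i}$ — the assertion.

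Since the lemma is elementary, there is no serious obstacle; the points that merit care are purely bookkeeping. The first is verifying that $\varphi$ genuinely intertwines the formal resultant $\res_\mu$ with $\prod_{i<j}\res(p_{\mu,i},p_{\mu,j})$, so that the pairwise-coprimality hypothesis is precisely what licenses inverting $\res_\mu$ and guarantees that the $f_i$ have no poles. The second is the multiplicity matching: one must make sure that grouping generalized eigenspaces by which factor $p_{\mu,i}$ vanishes on them is consistent with $\det(T-A)=\prod_i p_{\mu,i}$, which is what makes $p_{\mu,i}(A)$ annihilate $V_i$ exactly. Once these are in place, the remainder is the standard Chinese-remainder/partial-fraction computation inside $\C[A]$.
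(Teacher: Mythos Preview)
Your argument is correct and is precisely the standard elementary verification the paper has in mind; in fact the paper does not give a proof at all, merely recording that ``the following statement is elementary.'' Your write-up supplies exactly the routine B\'ezout/partial-fraction computation and primary-decomposition check that justifies this remark.
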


\begin{prop}\label{prop:res_or}
    Let $\pi$ be an irreducible admissible $GL_{n}(F_v)$-module. Then either $\res_{q_v, \mu}^{n!}\pi^{\p_1} = 0$, or \[\rec^T_{F_v}(\pi) = (\chi_1\oplus \ldots \oplus \chi_n, 0)\] where $\chi_1,\ldots,\chi_{n_1+\ldots+n_{k-1}}$ are unramified and the rest are tamely ramified with equal restriction to inertia.
\end{prop}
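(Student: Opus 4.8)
The plan is to argue by contraposition: assume that $\res_{q_v, \mu}^{n!} \pi^{\p_1} \neq 0$ and deduce the stated rigid structure on $\rec^T_{F_v}(\pi)$. First I would observe that $\pi^{\p_1} \neq 0$ forces the supercuspidal support of $\pi$ to consist of tamely ramified characters (as already noted in the excerpt, since $\Iw' \subset \p_1$), so we may write $\pi$ as a quotient of $\ind_Q^G \Sp_{k_1}(\chi_1) \otimes \ldots \otimes \Sp_{k_r}(\chi_r)$ with each $\chi_i$ tamely ramified, and invoke \cref{lem:steinberg_eigenvalues}: $(\pi^{\p_1})^{ss}$ decomposes as a direct sum of one-dimensional $\Z[\Xi_v]^{S_\mu}$-modules indexed by a subset of $\mathcal{A}$, where on the component attached to $(S_1, \ldots, S_k) \in \mathcal{A}$ the operator $V^{i,j}$ acts by $e_i(S_j(\varpi))$. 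The point of the elementary lemma preceding this proposition is that $\res_{q_v,\mu}$ corresponds, under the homomorphism $\varphi$ determined by the tuple of polynomials $\bigl(p_{\mu,j}(T) = \prod_{\chi \in S_j}(T - \chi(\varpi))\bigr)_j$, to $\prod_{i<j}\res\bigl(p_{\mu,i}(q_v T), p_{\mu,j}(T)\bigr)$, which is a number.

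Next I would analyze when this resultant is nonzero on some component. The resultant $\res\bigl(p_{\mu,i}(q_vT), p_{\mu,j}(T)\bigr)$ vanishes precisely when some root $\beta$ of $p_{\mu,j}$ satisfies $q_v \beta = \gamma$ for a root $\gamma$ of $p_{\mu,i}$, i.e. when $S_j$ contains a character $\psi$ and $S_i$ contains $\psi|\,\cdot\,|$ (value at $\varpi$ scaled by $q_v^{-1}$; conventions to be pinned down). Recalling the defining conditions of $\mathcal{A}$ — characters in a common Zelevinsky segment lie in distinct $S_i$, and if $\chi \in S_i$, $\chi(a) \in S_j$ share a segment with $a>0$ then $i<j$ — the combinatorial content is that, for the relevant surviving component, consecutive characters of each segment $\Delta = (\chi, \chi(1), \ldots, \chi(k-1))$ of length $\geq 2$ get distributed into $S_1, S_2, \ldots$ in order, which makes $\res_{q_v,\mu}$ vanish unless no segment has length $\geq 2$ that "wraps around" past the $k$-th block — and more precisely, nonvanishing of $\res_{q_v,\mu}^{n!}$ on the whole of $\pi^{\p_1}$ forces all segments to be short in the blocks $1, \ldots, k-1$. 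I would then translate this: all Zelevinsky segments contributing characters to $S_1, \ldots, S_{k-1}$ must have length $1$ (so those characters are honest unramified twists, not genuine Steinberg pieces), and the only segments of length $\geq 2$ must have all their characters concentrated in $S_k$ — but the extra congruence condition at block $k$ (fixed vectors under $\ker(\det: \GL_{n_k}(\OO_{F_v}) \to k_v^\times(p))$, analyzed in the proof of \cref{lem:steinberg_eigenvalues}) shows such a segment can contribute at most one character to $S_k$ as well, hence again has length $1$. The upshot is that $\pi$ is a full (irreducible) principal series $\rec^T_{F_v}(\pi) = (\chi_1 \oplus \ldots \oplus \chi_n, 0)$, with $\chi_1, \ldots, \chi_{n_1 + \ldots + n_{k-1}}$ unramified (they live in blocks $1, \ldots, k-1$, where $\p_1$ imposes spherical conditions) and $\chi_{n_1+\ldots+n_{k-1}+1}, \ldots, \chi_n$ tamely ramified; the condition $q_v \equiv 1 \pmod{?}$ is not in play here, but the scaling by $q_v$ in $\res_{q_v,\mu}$ together with the congruence structure forces the tamely ramified ones to have equal restriction to $I_{F_v}$.

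Concretely the steps, in order, would be: (1) reduce to $(\pi^{\p_1})^{ss}$ and apply \cref{lem:steinberg_eigenvalues} to get the $\mathcal{A}$-indexed decomposition with explicit $V^{i,j}$-eigenvalues; (2) using the preceding elementary lemma, identify the scalar by which $\res_{q_v,\mu}$ acts on the component $(S_1, \ldots, S_k)$ as $\prod_{i<j}\res(p_{\mu,i}(q_v T), p_{\mu,j}(T))$; (3) determine, combinatorially from the defining properties of $\mathcal{A}$, for which $\pi$ this scalar can be nonzero on every surviving component — this is where one exploits that a Zelevinsky segment of length $\ell \geq 2$ forces a chain $\psi, \psi|\,\cdot\,|, \ldots$ spread across increasing blocks and hence a vanishing resultant between adjacent blocks; (4) conclude that $\pi$ must be an irreducible principal series with the asserted ramification pattern, reading off from which blocks each $\chi_i$ comes that the first $n_1 + \ldots + n_{k-1}$ are unramified and the rest tamely ramified with common inertial restriction. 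The main obstacle I anticipate is step (3): carefully controlling the interaction between the three bullet conditions defining $\mathcal{A}$, the block structure $\mu$, and the $q_v$-twist in the resultant — in particular handling, uniformly in all $(S_1, \ldots, S_k)$ that actually occur in $(\pi^{\p_1})^{ss}$ (not just one convenient choice), the borderline case of a length-$2$ segment sitting inside blocks $k-1$ and $k$, and showing the extra $k_v^\times(p)$-congruence at block $k$ (from the $\p_1$ versus $\p$ distinction in \cref{lem:steinberg_eigenvalues}) rules it out. The rest is bookkeeping with symmetric functions and resultants.
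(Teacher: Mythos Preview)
Your approach is exactly the paper's: split into the case where some Zelevinsky segment has length $\geq 2$ (and show $\res_{q_v,\mu}^{n!}\pi^{\p_1}=0$ via \cref{lem:steinberg_eigenvalues}) versus the case where all $k_i=1$ (and read off the ramification pattern). The paper's own proof is two lines: the first case ``follows from \cref{lem:steinberg_eigenvalues}'', and for the second it cites the proof of \cite[Lemma~3.1.6]{CHT}.

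That said, the combinatorial reasoning in your step~(3) is more convoluted than necessary and contains a wrong intermediate claim. You write that a segment of length $\geq 2$ might have ``all their characters concentrated in $S_k$'' and then invoke the $\p_1$-congruence at block $k$ to rule this out. But this situation cannot arise: the very definition of $\mathcal{A}$ requires characters from the same segment to lie in \emph{distinct} $S_i$'s. So any segment of length $\geq 2$ contributes consecutive characters $\chi(a)\in S_i$, $\chi(a+1)\in S_j$ with $i<j$; since $q_v\cdot\chi(a+1)(\varpi)=\chi(a)(\varpi)$, the factor $\res(p_{\mu,i}(q_vT),p_{\mu,j}(T))$ vanishes, and hence $\res_{q_v,\mu}$ acts by zero on \emph{every} component of $(\pi^{\p_1})^{ss}$. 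Nilpotence degree $\leq \dim\pi^{\p_1}\leq |\mathcal{A}|\leq n!$ then gives $\res_{q_v,\mu}^{n!}\pi^{\p_1}=0$. There is no special role for block $k$ in this argument, and your anticipated ``main obstacle'' (the length-$2$ segment straddling blocks $k-1$ and $k$) is handled identically to every other pair $i<j$; you do not need the $k_v^\times(p)$-congruence here at all.

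For the second part (once all $k_i=1$), the unramifiedness of $\chi_1,\ldots,\chi_{n_1+\ldots+n_{k-1}}$ and the common inertial restriction of the last $n_k$ characters come from the shape of $\p_1$, not from the $q_v$-twisted resultant; the paper simply invokes \cite[Lemma~3.1.6]{CHT} for this, and you should do the same rather than gesture at ``the congruence structure''.
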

\begin{proof}
Using the notation from the discussion preceding \cref{lem:N_coinv}, if there exists some $k_i > 1$, then $\res_{q_v, \mu}^{n!}\pi^{\p_1} = 0$ follows from \cref{lem:steinberg_eigenvalues}. Otherwise we can apply the proof of \cite[Lemma~3.1.6]{CHT} for the second conclusion.
\end{proof}

\begin{prop}\label{prop:res_or_vee}
    Let $\pi$ be an irreducible admissible $GL_{n}(F_v)$-module. Let $(r, N) = \rec^T_{F_v}(\pi)$. Then either $(t_{\mu}^{-1} \circ \iota_{\HH} \circ t_\mu)(\res_{q_v, \mu}^{n!})\pi^{\p_1} = 0$, or $N = 0$ and \[r^\vee = \chi_1\oplus \ldots \oplus \chi_n\] where $\chi_1,\ldots,\chi_{n_1+\ldots+n_{k-1}}$ are unramified and the rest are tamely ramified with equal restriction to inertia.
\end{prop}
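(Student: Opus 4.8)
The plan is to reduce this statement to \cref{prop:res_or} by relating the action of the ``dual'' operator $(t_{\mu}^{-1} \circ \iota_{\HH} \circ t_\mu)(\res_{q_v, \mu}^{n!})$ on $\pi^{\p_1}$ to the action of $\res_{q_v, \mu}^{n!}$ on $(\pi^\vee)^{\p_1'}$, where $\pi^\vee$ is the smooth contragredient and $\p_1'$ is the parahoric subgroup whose partition blocks are reversed. First I would recall the standard fact that for any smooth admissible $\pi$ and any open compact $\mathfrak{q}$, the anti-involution $\iota_{\HH}$ of $\HH_\OO(G, \mathfrak{q})$ intertwines the $\HH_\OO(G,\mathfrak q)$-module $\pi^{\mathfrak q}$ with the contragredient module structure on $(\pi^\vee)^{\mathfrak q}$, via the perfect pairing $\pi^{\mathfrak q} \times (\pi^\vee)^{\mathfrak q} \to \C$. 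Applying $t_\mu$ and $\iota_{\HH}$ and then pulling back along $t_\mu$ translates the condition $(t_{\mu}^{-1} \circ \iota_{\HH} \circ t_\mu)(\res_{q_v, \mu}^{n!})\pi^{\p_1} = 0$ into the vanishing of a $\res_{q_v,\mu}$-type element acting on the contragredient side, up to identifying $\iota_{\HH}$ on $\HH_\OO(M,\m_1)$ with the substitution $X_j \mapsto X_j^{-1}$ on $\Z[\Xi_v]^{S_\mu}$, which corresponds on Satake parameters to passing from a matrix $A$ to $A^{-1}$ (equivalently $r \mapsto r^\vee$ at the level of Weil--Deligne representations, since $\rec^T$ is compatible with duals up to the reversal convention built into $\Sp$ segments).

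The key steps, in order, are: (i) set $\pi' = \pi^\vee$, and let $\mu'$ be the reversed partition $n = n_k + \dots + n_1$ with corresponding parahoric $\p_1'$; establish a natural $\HH$-equivariant identification $\pi^{\p_1} \cong (\pi')^{\p_1'}$ under which $t_{\mu}^{-1} \circ \iota_{\HH} \circ t_\mu$ on the left corresponds to the plain $t_{\mu'}\circ \Sat_{\mu'}^{-1}$-module structure on the right; (ii) check that $\res_{q_v,\mu}$ is carried under $X_j \mapsto X_j^{-1}$ (together with the reversal of blocks) to $\res_{q_v,\mu'}$ up to a unit in $\Z[\Xi_v]^{S_{\mu'}}$ — this is an elementary manipulation of resultants of reciprocal polynomials, using that $\res(P^\vee, Q^\vee)$ and $\res(P,Q)$ differ by a unit, and that $P(q_vT)^\vee$ is again of the shifted form; (iii) apply \cref{prop:res_or} to $\pi'$ and the partition $\mu'$: either $\res_{q_v,\mu'}^{n!}(\pi')^{\p_1'} = 0$, giving the first alternative, or $\rec^T_{F_v}(\pi') = (\chi_1'\oplus\dots\oplus\chi_n', 0)$ with the stated ramification pattern; (iv) translate back via $\rec^T_{F_v}(\pi^\vee) = \rec^T_{F_v}(\pi)^\vee$ (so $N=0$ and $r^\vee = \chi_1'\oplus\dots\oplus\chi_n'$), and note that unramifiedness and ``tamely ramified with equal restriction to inertia'' are preserved under $\chi \mapsto \chi^{-1}$.

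I expect step (i) to be the main obstacle: one must verify carefully that the anti-involution $\iota_{\HH}$, which is only manifestly well-behaved with respect to $\mathfrak{q}$-biinvariant functions, interacts correctly with the Bernstein-type embedding $t_\mu$ used to define the $V^{i,j}$ (and their $\m_1$-analogues), since $t_\mu$ is built from \emph{positive} elements and $\iota_{\HH}$ sends positive to negative. Concretely, $t_\mu^{-1} \circ \iota_{\HH} \circ t_\mu$ need not literally be the Satake involution $X_j \mapsto X_j^{-1}$ on the nose; one likely has to absorb a $\delta_P$-type twist and a Weyl-group reflection, and track how these modify the resultant element. Once the precise form of $(t_{\mu}^{-1} \circ \iota_{\HH} \circ t_\mu)(\res_{q_v, \mu}^{n!})$ is pinned down, the remaining steps are the routine resultant bookkeeping of (ii) and the formal duality properties of $\rec^T$ in (iii)--(iv). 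If the identification in (i) turns out to be cleaner to phrase at the level of Jacquet modules — using \cref{prop:homplus}(3) and the fact that $(\pi^\vee)_N \cong (\pi_{\ov N})^\vee$ as $M$-representations — I would instead run the whole argument through Jacquet modules with respect to $P$ and $\ov P$, which makes the block-reversal transparent and sidesteps having to compute $\iota_{\HH}$ on $\HH_\OO(G,\p_1)$ directly.
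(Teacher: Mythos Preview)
Your core strategy---pass to the contragredient $\pi^\vee$ and invoke \cref{prop:res_or}---is exactly what the paper does, but your execution is substantially more complicated than necessary. The paper's proof is three lines: it uses the perfect pairing $(\pi^\vee)^{\p_1} \times \pi^{\p_1} \to \C$ for the \emph{same} subgroup $\p_1$, observes that for $T \in \HH_\OO(G,\p_1)$ one has $\langle Tv, w\rangle = \langle v, \iota_{\HH}(T)w\rangle$, and concludes immediately that $(t_\mu^{-1}\circ\iota_{\HH}\circ t_\mu)(\res_{q_v,\mu}^{n!})$ kills $\pi^{\p_1}$ if and only if $\res_{q_v,\mu}^{n!}$ kills $(\pi^\vee)^{\p_1}$. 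Then \cref{prop:res_or} applied to $\pi^\vee$ with the \emph{same} partition $\mu$ and the \emph{same} $\p_1$ gives the conclusion, using $\rec_{F_v}^T(\pi^\vee) = (r^\vee, -{}^tN)$.

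Your detour through the reversed partition $\mu'$, the reversed parahoric $\p_1'$, and the resultant bookkeeping in step (ii) is entirely avoidable: there is no need to compute what $t_\mu^{-1}\circ\iota_{\HH}\circ t_\mu$ does to $\res_{q_v,\mu}$, nor to track $\delta_P$-twists or Weyl reflections, because the statement only concerns \emph{vanishing} of this operator on $\pi^{\p_1}$, and that vanishing transfers to the dual side by adjointness alone. The ``main obstacle'' you flag in (i) therefore dissolves. In short, drop the block reversal and the Satake-involution computation; the pairing with the same $\p_1$ does all the work.
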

\begin{proof}
Let $\pi^\vee$ be the contragradient of $\pi$. Then $\rec_{F_v}(\pi^\vee) = (r^\vee, -^tN)$. We have a perfect pairing $(\pi^\vee)^{\p_1} \times \pi^{\p_1} \to \C$ which is anti-symmetric with respect to action of $\OO[\Xi_{v,1}]^{S_\mu}$ and $t_{\mu}^{-1} \circ \iota_{\HH} \circ t_\mu$. Therefore $(t_{\mu}^{-1} \circ \iota_{\HH} \circ t_\mu)(\res_{q_v, \mu}^{n!})\pi^{\p_1} = 0$ if and only if $\res_{q_v, \mu}^{n!}(\pi^\vee)^{\p_1} = 0$. Thus we can assume both of these are nonzero, in which case by \cref{prop:res_or} we get the desired result. 
\end{proof}

Let $\varphi_v \in G_{F_v}$ be any lift of Frobenius.
\begin{prop}\label{prop:relation_1_rep}
    Let $\pi$ be an irreducible admissible $GL_{n}(F_v)$-module. Let $(r, N) = \rec^T_{F_v}(\pi)$. Let $R$ be the image of $\OO[\Xi_{v,1}]^{S_\mu}$ in $\End_\OO(\pi^{\p_1})$ under the map $t_\mu$. Then either $\res_{q_v, \mu}^{n!}\pi^{\p_1} = 0$ or the following relation holds over $R$ : for all $\tau \in I_{F_v}$ \[\res_{\mu}^{n!}\left(\sum \limits_{i = 1}^{k-1} E_{\mu, i}(r(\varphi_v)) + \langle \textnormal{Art}_{F_v}^{-1}(\tau) \rangle E_{\mu, k}(r(\varphi_v)) - \res_{\mu}r(\tau) \right) = 0.\]
\end{prop}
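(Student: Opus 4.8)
The plan is to reduce to the case where $\pi^{\p_1}$ is semisimple as a $\Z[\Xi_{v,1}]^{S_\mu}$-module and then verify the relation component by component using the explicit description of the action furnished by \cref{lem:steinberg_eigenvalues}. By \cref{prop:res_or}, if $\res_{q_v,\mu}^{n!}\pi^{\p_1}\neq 0$ then $N=0$ and we may write $\rec^T_{F_v}(\pi)=(\chi_1\oplus\cdots\oplus\chi_n,0)$ where $\chi_1,\ldots,\chi_{s_{\mu,k-1}}$ are unramified and $\chi_{s_{\mu,k-1}+1},\ldots,\chi_n$ are tamely ramified with a common restriction to inertia; in particular, by \cref{lem:steinberg_eigenvalues}, $(\pi^{\p_1})^{ss}$ is a sum of one-dimensional $R$-modules indexed by a subset of $\mathcal{A}$, where on the component labeled $(S_1,\ldots,S_k)$ the operator $V^{i,j}$ acts by $e_i(S_j)(\varpi)$. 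Since the desired relation is an identity of operators in $R\subset\End_\OO(\pi^{\p_1})$ and $R$ is a commutative $\OO$-algebra acting through $\Z[\Xi_{v,1}]^{S_\mu}$, it suffices to check it after passing to $R^{ss}$, i.e. on each such component (the association $V^{i,j}\leftrightarrow e_i(X_{s_{\mu,j-1}+1},\ldots,X_{s_{\mu,j}})$ is exactly the identification under which $\res_\mu,\res_{q_v,\mu},E_{\mu,i}$ were defined).

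First I would fix a component $(S_1,\ldots,S_k)\in\mathcal{A}$ and identify the partition of the multiset $\{\chi_1,\ldots,\chi_n\}$ into the labeled subsets $S_1,\ldots,S_k$, each of size $n_1,\ldots,n_k$ respectively. Under the homomorphism $\varphi:\Z[\Xi_v]^{S_\mu}\to\C$ of the unnamed lemma preceding this proposition, applied with $A=r(\varphi_v)$ and the factorization $\det(T-r(\varphi_v))=\prod_i p_{\mu,i}(T)$ where $p_{\mu,i}(T)=\prod_{\chi\in S_i}(T-\chi(\varpi))$, the pairwise coprimality of the $p_{\mu,i}$ is precisely the nonvanishing of $\res_{q_v,\mu}^{n!}$ (here one uses that $\chi_1,\ldots,\chi_{s_{\mu,k-1}}$ being unramified forces the coprimality conditions on the first $k-1$ blocks to be the same whether or not one applies the $q_v$-twist, and the last block is handled by the $q_v$-twisted resultant — this is the place where the exact form $\res_{q_v,\mu}$ rather than $\res_\mu$ matters). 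That lemma then tells us $\varphi(E_{\mu,i}/\res_\mu)(r(\varphi_v))$ is the spectral projector of $r(\varphi_v)$ onto the sum of generalized eigenspaces attached to the roots of $p_{\mu,i}$, i.e. onto $\bigoplus_{\chi\in S_i}(\text{gen. }\chi(\varpi)\text{-eigenspace})$.

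Now I would compute both sides of the claimed relation on this component. Since $r(\tau)$ for $\tau\in I_{F_v}$ is simultaneously diagonalizable with $r(\varphi_v)$ (all $\chi_j$ are characters, so $r$ is a sum of characters of $W_{F_v}$ and $r(\tau)$ acts on the $\chi_j$-line by $\chi_j(\mathrm{Art}_{F_v}(\tau))$, which for $j\le s_{\mu,k-1}$ is trivial since $\chi_j$ is unramified), the element $\sum_{i=1}^{k-1}E_{\mu,i}(r(\varphi_v))+\langle\mathrm{Art}_{F_v}^{-1}(\tau)\rangle E_{\mu,k}(r(\varphi_v))-\res_\mu r(\tau)$, after multiplying by the scalar $\res_\mu^{n!}$ coming from clearing denominators in the $E_{\mu,i}/\res_\mu$, becomes, on each common eigenline $\C\chi_j$: for $j\le s_{\mu,k-1}$, the line lies in $\bigoplus_{\chi\in S_{i_0}}(\cdots)$ for the unique block $i_0\le k-1$ containing $\chi_j$, so $E_{\mu,i_0}$ acts as $\res_\mu$ and all other $E_{\mu,i}$ act as $0$, while $\res_\mu r(\tau)$ acts as $\res_\mu\cdot 1$; for $j>s_{\mu,k-1}$, i.e. $\chi_j\in S_k$, only $E_{\mu,k}$ acts as $\res_\mu$ and $\res_\mu r(\tau)$ acts as $\res_\mu\cdot\chi_j(\mathrm{Art}_{F_v}(\tau))$, which equals $\res_\mu$ times the common inertial restriction, which is exactly what $\langle\mathrm{Art}_{F_v}^{-1}(\tau)\rangle$ records via the map $\OO_{F_v}^\times\to k_v^\times(p)$ (here I use that all tamely ramified $\chi_j$ in the last block have the same restriction to $I_{F_v}$, which factors through $k_v^\times(p)$ since $\pi^{\p_1}\neq 0$ and $\Iw'\subset\p_1$). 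Hence the operator vanishes on every common eigenline of the semisimplification, so it vanishes on $(\pi^{\p_1})^{ss}$, hence in $R^{ss}$, hence in $R$ since the relation is polynomial in the (commuting) generators and $R\to R^{ss}$ is injective on the subalgebra generated by the image of $t_\mu$ restricted to $\OO[\Xi_{v,1}]^{S_\mu}$.

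I expect the main obstacle to be the bookkeeping in matching $\langle\mathrm{Art}_{F_v}^{-1}(\tau)\rangle$ with the inertial character of the last block: one must check that the action of $\tau\in I_{F_v}$ on $\pi^{\p_1}$ through the Hecke operators in $\HH(G,\p_1)$ (as opposed to $\HH(G,\p)$) really is via $\langle\,\cdot\,\rangle:\OO_{F_v}^\times\to\Z[\Xi_{v,1}]$, and that this is compatible under $\rec^T$ with the common tame inertial character $\chi_j|_{I_{F_v}}$ of the $S_k$-block. This is where one genuinely uses the refined Satake isomorphism for $\m_1$ referenced in the text (the forthcoming work of Whitmore) and the construction of the $V^{i,j}\in\HH(G,\p_1)$; the reconciliation is the analogue of \cite[Lemma~3.1.6]{CHT} and should follow the same template, tracking the extra $k_v^\times(p)$-twist through the $\p_1$-level structure.
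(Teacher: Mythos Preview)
Your reduction to $(\pi^{\p_1})^{ss}$ does not work as stated. The relation you want is an identity of matrices in $M_n(R)$, and $R$ is \emph{by definition} the image of $\OO[\Xi_{v,1}]^{S_\mu}$ in $\End_\OO(\pi^{\p_1})$ under $t_\mu$; so ``the subalgebra generated by the image of $t_\mu$'' is all of $R$, and your claim that $R\to R^{ss}$ is injective on this subalgebra is exactly the claim that $R$ is reduced, which is false in general. Verifying the identity on each one-dimensional constituent of $(\pi^{\p_1})^{ss}$ only checks it modulo the nilradical of $R$. Relatedly, you conflate $\res_{q_v,\mu}$ with $\res_\mu$: the hypothesis is that $\res_{q_v,\mu}^{n!}$ does not kill $\pi^{\p_1}$ (globally), but on an individual component the element $\res_\mu$---which is what appears in the relation and governs coprimality of the $p_{\mu,i}$---may well vanish. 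On such a component your projector computation via the unnamed lemma is not available.

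The paper's proof is organized precisely to handle this: one localizes $R$ at each maximal ideal $\m$. If $\res_\mu\in\m$ then $\res_\mu$ is nilpotent in the Artin local ring $R_\m$, and since $\dim_\C R_\m\le\dim_\C\pi^{\p_1}\le|\mathcal A|\le n!$ one has $\res_\mu^{n!}=0$ in $R_\m$, so the relation holds trivially. If $\res_\mu\notin\m$ then $R_\m=\C$ and one carries out exactly the eigenline computation you sketched. This dichotomy is the reason the exponent $n!$ is in the statement at all; your argument never accounts for it. Once you insert this localization step, the rest of your computation (matching $\langle\mathrm{Art}_{F_v}^{-1}(\tau)\rangle$ with the common inertial character on the $S_k$-block) is correct and is the same as the paper's.
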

\begin{proof}
Assume $\res_{q_v, \mu}^{n!}\pi^{\p_1} \neq 0$. It is enough to check our relation for each localization of $R$ at a maximal ideal $\m$. If $\res_{\mu} \in \m$, then $\res_{\mu}^{n!} = 0$ in $R_\m$. Otherwise $R_\m = \C$ by \cite[\href{https://stacks.math.columbia.edu/tag/00UA}{Tag 00UA}]{stacks-project}, and the image of $\OO[\Xi_{v,1}]^{S_\mu}$ in $R/\m$ corresponds to the polynomials $\prod \limits_{j = s_{\mu, i-1}+1}^{s_{\mu, i}}(T - \chi_j(\varphi_v))$ for $i = 1, \ldots, k$. Then the image of \[{\res_\mu}^{-1}\left( \sum \limits_{i = 1}^{k-1} E_{\mu, i}(r(\varphi_v)) + \langle \text{Art}_{F_v}^{-1}(\tau) \rangle E_{\mu, k}(r(\varphi_v)) \right)\] in $M_n(R_\m)$ is a diagonal matrix with $n - n_k$ first entries equal to $1$ and the rest equal to $\chi_n(\tau)$. This concludes the proof.
\end{proof}

\begin{prop}\label{prop:relation_1_rep_vee}
    Let $\pi$ be an irreducible admissible $GL_{n}(F_v)$-module. Let $(r, N) = \rec^T_{F_v}(\pi)$. Let $R'$ be the image of $\OO[\Xi_{v,1}]^{S_\mu}$ in $\End_\OO(\pi^{\p_1})$ via the map $t_{\mu}^{-1} \circ \iota_{\HH} \circ t_\mu$. Then either $(t_{\mu}^{-1} \circ \iota_{\HH} \circ t_\mu)(\res_{q_v, \mu}^{n!})\pi^{\p_1} = 0$ or the following relation holds over $R'$ : for all $\tau \in I_{F_v}$ \[(t_{\mu}^{-1} \circ \iota_{\HH} \circ t_\mu)\left(\res_{\mu}^{n!}\left(\sum \limits_{i = 1}^{k-1} E_{\mu, i}(r^\vee(\varphi_v)) + \langle \textnormal{Art}_{F_v}^{-1}(\tau) \rangle E_{\mu, k}(r^\vee(\varphi_v)) - \res_{\mu}r^\vee(\tau) \right)\right) = 0.\]
\end{prop}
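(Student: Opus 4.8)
The plan is to deduce this from Proposition~\ref{prop:relation_1_rep} applied to the contragredient $\pi^\vee$, in exactly the way that Proposition~\ref{prop:res_or_vee} was deduced from Proposition~\ref{prop:res_or}. We use that $\pi^\vee$ is again irreducible admissible, that its associated Weil--Deligne representation $\rec^T_{F_v}(\pi^\vee)$ has semisimple part $r^\vee$ (as recorded in the proof of Proposition~\ref{prop:res_or_vee}), and that there is a perfect $\C$-bilinear pairing $\langle\,\cdot\,,\,\cdot\,\rangle : (\pi^\vee)^{\p_1}\times\pi^{\p_1}\to\C$ satisfying $\langle h\cdot a,b\rangle = \langle a,\iota_{\HH}(h)\cdot b\rangle$ for all $h\in\HH_{\OO}(G,\p_1)$. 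Since $\iota_{\HH}$ carries the image of $t_\mu$ into itself, it follows that for every $x\in\OO[\Xi_{v,1}]^{S_\mu}$ the operator $t_\mu(x)$ on $(\pi^\vee)^{\p_1}$ and the operator $(t_\mu^{-1}\circ\iota_{\HH}\circ t_\mu)(x)$ on $\pi^{\p_1}$ are adjoint with respect to this pairing. In particular $R'$ is the ring of adjoints of the operators in $R^\vee$, the image of $\OO[\Xi_{v,1}]^{S_\mu}$ in $\End_{\OO}((\pi^\vee)^{\p_1})$ under $t_\mu$, and the correspondence $R^\vee\to R'$ sending an operator to its adjoint is the ring homomorphism induced by $t_\mu^{-1}\circ\iota_{\HH}\circ t_\mu$.

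Given this, the argument is purely formal. Applying Proposition~\ref{prop:relation_1_rep} to $\pi^\vee$ gives that either $\res_{q_v,\mu}^{n!}(\pi^\vee)^{\p_1}=0$, or the relation of Proposition~\ref{prop:relation_1_rep}, with $r$ replaced everywhere by $r^\vee$, holds over $R^\vee$. In the first case, perfectness of the pairing makes the vanishing of $t_\mu(\res_{q_v,\mu}^{n!})$ on $(\pi^\vee)^{\p_1}$ equivalent to the vanishing of its adjoint $(t_\mu^{-1}\circ\iota_{\HH}\circ t_\mu)(\res_{q_v,\mu}^{n!})$ on $\pi^{\p_1}$, which is precisely the first alternative of the proposition. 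In the second case, every operator entering the relation lies in $R^\vee$ --- the matrices $r^\vee(\varphi_v)$ and $r^\vee(\tau)$ are fixed and have scalar entries commuting with this action --- so taking adjoints term by term, and using that $t_\mu^{-1}\circ\iota_{\HH}\circ t_\mu$ is a ring homomorphism on the commutative algebra $\OO[\Xi_{v,1}]^{S_\mu}$ and hence commutes both with forming the polynomial expression and with extension of scalars, converts the identity into the one asserted over $R'$.

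The single point that requires genuine care --- and it is not really an obstacle, being already implicit in the proof of Proposition~\ref{prop:res_or_vee} --- is the construction of the perfect pairing $\langle\,\cdot\,,\,\cdot\,\rangle$ on $(\pi^\vee)^{\p_1}\times\pi^{\p_1}$ together with the verification that $\iota_{\HH}$ preserves the image of $t_\mu$ in $\HH_{\OO}(G,\p_1)$, so that the adjunction statement above is meaningful and correct. Everything else reduces to bookkeeping with adjoint operators and the dichotomy supplied by Proposition~\ref{prop:relation_1_rep}, and no local representation theory beyond what was developed for that proposition is needed.
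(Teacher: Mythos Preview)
Your argument is correct. You deduce the statement by applying Proposition~\ref{prop:relation_1_rep} to the contragredient $\pi^\vee$ and then transferring the resulting relation across the perfect pairing $(\pi^\vee)^{\p_1}\times\pi^{\p_1}\to\C$, exactly as Proposition~\ref{prop:res_or_vee} was obtained from Proposition~\ref{prop:res_or}. The paper instead takes the orthogonal route through the same square: it reruns the localization argument from the proof of Proposition~\ref{prop:relation_1_rep} (checking the relation at each maximal ideal of $R'$), invoking Proposition~\ref{prop:res_or_vee} in place of Proposition~\ref{prop:res_or} at the key step. Your path has the advantage of not repeating the localization analysis, at the cost of having to track the adjunction through a matrix identity with mixed scalar and Hecke-operator entries; the paper's path avoids that bookkeeping but needs the explicit structure of $r^\vee$ supplied by Proposition~\ref{prop:res_or_vee}. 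Either way the content is the same, and your remark that the only genuine input is the perfect pairing together with the fact that $\iota_\HH$ preserves the image of $t_\mu$ (both already used in the proof of Proposition~\ref{prop:res_or_vee}) is accurate.
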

\begin{proof}
    This follows from \cref{prop:res_or_vee} in the same way as \cref{prop:relation_1_rep} follows from \cref{prop:res_or}. 
\end{proof} 
\section{Setup}\label{sec:setup}

Let $F/F^+$ be an imaginary CM-field with ring of integers $\OO$. Let $\Psi_n$ be the matrix with $1$-s on the antidiagonal and $0$-s elsewhere, and let 
\[J_n = \begin{pmatrix} 0 & \Psi_n \\ -\Psi_n & 0 \end{pmatrix}\]
Define $\til{G}$ to be the group scheme over $\OO_{F^+}$ defined by the functor of points 
\[ \til{G}(R) = \{g \in \GL_{2n}(R \otimes_{\OO_{F^+}} \OO_{F}) \mid {}^tg J_n g^c = J_n\}\]
Then $\til{G}$ is a quasi-split reductive group over $F^+$. It is a form of $\GL_{2n}$ which becomes 
split after the quadratic base change $F/F^+$. If $v$ is a place of $F$ lying above a place $\ov{v}$ of $F^+$
which splits in $F$, then we have a canonical isomorphism $\iota_v : \til{G}(F^+_{\ov{v}}) \cong \GL_{2n}(F_v)$. There is an isomorphism $F_{\ov{v}}^+ \otimes_{F^+} F \cong F_v \times F_{v^c}$, and $\iota_v$ is given by composition \[\til{G}(F_{\ov{v}^+}) \xhookrightarrow{} \GL_{2n}(F_v) \times \GL_{2n}(F_{v^c}) \to \GL_{2n}(F_v)\] where the second maps is the projection on the first factor.
We write $T \subset B \subset G$ for the subgroups consisting, respectively, of the diagonal and upper-triangular matrices in $\til{G}$. Similarly we write $G \subset P \subset \til{G}$ for the Levi and parabolic subgroups consisting, respectively, of the block upper
diagonal and block upper-triangular matrices with blocks of size $n \times n$. Then
$P = U \rtimes G$, where $U$ is the unipotent radical of $P$, and we can identify $G$ with
$\res_{\OO_F / \OO_{F^+}} \GL_n$ via the map 
\[\begin{pmatrix} A & 0 \\ 0 & D \end{pmatrix} \mapsto D \in \GL_n(R \otimes_{\OO_{F^+}} \OO_F)\]

An element $(g_v)_v \in G(\A_{F^+}^\infty) = 
GL_n(\A_F^\infty)$ is called \textit{neat} if the intersection $\cap_v\Gamma_v$
is trivial, where $\Gamma_v \subset \ov{\Q}^\times$ is is the torsion subgroup of the subgroup of 
$\ov{F_v}^\times$ generated by the eigenvalues of $g_v$ acting via some faithful representation of $G$.
We call a neat open compact subgroup $K \subset G(\A_{F^+}^\infty)$ \textit{good} if it has the from
$K = \prod_v K_v$, where the product is running over the finite places of $F$. We make similar definitions with $\til{G}$ in place of $G$.

After extending scalars to $F^+$, $T$ and $B$ form a maximal torus and a Borel subgroup, respectively, of $\til{G}$, and $G$ is the unique Levi subgroup of the parabolic subgroup $P$ of $\til{G}$ which contains $T$.
We call an open compact subgroup $\til{K}$ of $\til{G}(\A_{F^+}^\infty)$ \textit{decomposed} with respect to the Levi decomposition $P = GU$ if $\til{K} = \til{K}_{G} \ltimes \til{K}_{U}$ where $\til{K}_{G}$ is the image of $\til{K}$ in $G$ and $\til{K}_{U} = \til{K} \cap U(\A_{F^+}^\infty)$.

If $K$ is a good subgroup of $G$ we let $X_K$ be the corresponding locally symmetric space. 
Similarly, if $\til{K}$ is a good open compact subgroup of $\til{G}$, then $\til{X}_{\til{K}}$ denotes 
the locally symmetric space. More generally, if $H$ is a connected algebraic group over a number field $L$ and $K_H \subset H(\A_M^\infty)$ is a good subgroup, then we write $X_{K_H}^H$ for the locally symmetric space of $H$ of level $K_H$.

Fix a rational prime $p$, and a finite extension $E/\Q_p$ which contains the images 
of all embeddings $F \xhookrightarrow{} \ov{\Q}_p$. We write $\OO$ for the ring of integers
of $E$, and  $\varpi \in \OO$ for a choice of uniformizer. For $\lambda \in (\Z_+^n)^\Hom(F, E)$ we 
define an $\OO[\prod_{v \mid p} \GL_n(\OO_{F_v})]$-module $\V_\lambda$ as in \cite[\S 2.2.1]{10author}. Similarly for 
$\til{\lambda} \in (\Z_+^{2n})^\Hom(F^+, E)$ we have an $\OO[\prod_{\ov{v} \mid p} \til{G}(\OO_{F^+_{\ov{v}}})]$-module $\V_{\til{\lambda}}$. Both $\V_\lambda$ and $\V_{\til{\lambda}}$ are finite free $\OO$-modules. 

Let $S$ be a set of places of $F$ such that $S = S^c$ and such that $S$ contains all places above $p$ and all places of $F$ which are ramified over $F^+$. Let $\ov{S}$ be the set of places of $F^+$ lying below a place in $S$. Let $K \subset G(\A_{F^+}^\infty)$ be a good subgroup such that $K_{\ov{v}} = G(\OO_{F^+_{\ov{v}}})$ for $\ov{v} \notin \ov{S}$, and similarly let $\til{K} \subset \til{G}(\A_{F^+}^\infty)$ be a good subgroup such that $\til{K}_v = \til{G}(\OO_{F^+_{\ov{v}}})$ for $\ov{v} \notin \ov{S}$.

Define the Hecke algebras 
\[\HH^S = \HH_{\OO}(G(\A_{F^+}^{\infty, \ov{S}}), K^{\ov{S}})\]
\[\til{\HH}^S = \HH_{\OO}(\til{G}(\A_{F^+}^{\infty,\ov{S}}), \til{K}^{\ov{S}})\]
\[\TT^S \cong \bigotimes'_{v \notin S} \OO[\Xi_v]^{S_n}\]
\[\til{\TT}^S \cong \bigotimes'_{v \notin S} \OO[\Xi_{\ov{v}}]^{S_{2n}}\]
Using the isomorphism
\[G(\OO_{F^+_{\ov{v}}}) \cong \GL_n(\OO_{F_v})\]
together with the Satake isomorphisms, as well as an homomorphism $\OO[\Xi_{\ov{v}}]^{S_{2n}} \to \HH_\OO(\til{G}(F^+_{\ov{v}}), \til{G}(\OO_{F^+_{\ov{v}}}))$ given by the polynomial $\til{P}_v(X)$ defined in \cite[Eq.~2.2.5]{10author},
we get homomorphisms $\TT^S \to \HH^S$ and $\til{\TT}^S \to \til{\HH}^S$.
We also have homomorphisms 
\[\TT^S \to \End_{\mathbf{D}(\OO)}(R\Gamma(X_K, \V_\lambda))\] 
\[\til{\TT}^S \to \End_{\mathbf{D}(\OO)}(R\Gamma(\til{X}_{\til{K}}, \V_{\til{\lambda}}))\]
defined in \cite[\S 2.1.2]{10author} and we can denote by $\TT^S(K, \lambda)$, $\til{\TT}^S(\til{K}, \til{\lambda})$ respectively the images of those homomorphisms. The functor $H^*$ induces surjective $\OO$-algebra homomorphisms 
\[ \TT^S(K, \lambda) \to \End_{\OO}(H^*(X_K, \V_\lambda))\] 
\[\til{\TT}^S(\til{K}, \til{\lambda}) \to  \End_{\OO}(H^*(\til{X}_{\til{K}}, \V_{\til{\lambda}}))\]

\section{Boundary cohomology}

Let $\til{K} \subset \til{G}(\A_{F^+}^\infty)$ be a neat compact open subgroup decomposed with respect to the Levi decomposition $P = GU$. We also assume that $\til{K}_v = \til{G}(\OO_{F^+_{\ov{v}}})$ for $\ov{v} \notin \ov{S}$. Define $K$ as the image of $\til{K}$ in $G(\A_{F^+}^\infty)$, $\til{K}_P = \til{K} \cap P(\A_{F^+}^\infty)$ and $K_U = \til{K} \cap U(\A_{F^+}^\infty)$. Both $K$ and $\til{K}_P$ are neat. We recall from \cite[\S 3.1.2]{NT} that the boundary $\partial \til{X}_{\til{K}} = \ov{\til{X}}_{\til{K}}$ of the Borel-Serre compactification has a $\til{G}(\A_{F^+}^\infty)$-equivariant stratification indexed by the standard parabolic subgroups of $\widetilde{G}$. For each standard parabolic subgroup $Q$, label the corresponding stratum $\til{X}^Q_{\til{K}}$. We can write 
\[\til{X}^Q_{\til{K}} = Q(F^+)\backslash (X^Q \times \til{G}(\A_{F^+}^\infty)/\til{K}).\]

From now on, we will focus on the stratum $\til{X}^P_{\til{K}}$ corresponding to the Siegel parabolic. Let us establish some useful maps between the manifolds introduced above. The stratum $\til{X}^P_{\til{K}}$ can be described as a union of connected components indexed by the set $P(F^+)\backslash \til{G}(\A_{F^+}^\infty)/\til{K}$. The locally symmetric space $X^P_{\til{K}}$ is a union of the same components indexed by the set $P(F^+)\backslash P(\A_{F^+}^\infty)/\til{K}_P$. Thus we have a natural open immersion $i : X^P_{\til{K}} \to \til{X}^P_{\til{K}}$ such that $i^* : H^*(\til{X}^P_{\til{K}}, \OO) \to H^*(X^P_{\til{K}}, \OO)$ is a split epimorphism. We also have a proper map $j : X^P_{\til{K}_P} \to X_K$ which has a section by \cite[\S 3.1.1]{NT}. Thus we get a split monomorphism $j^* : H^*(X_K, \OO) \to H^*(X^P_{\til{K}}, \OO) $. We also recall the "restriction to P" and "integration along N" homomorphisms:
\[r_P : \HH_{\OO}(\til{G}(\A_{F^+}^{\infty,\ov{S}}), \til{K}^{\ov{S}}) \to \HH_{\OO}(\til{P}(\A_{F^+}^{\infty,\ov{S}}), \til{K}_P^{\ov{S}})\]

\[r_G : \HH_{\OO}(\til{P}(\A_{F^+}^{\infty,\ov{S}}), \til{K}_P^{\ov{S}}) \to \HH_{\OO}(G(\A_{F^+}^{\infty, \ov{S}}), K^{\ov{S}})\]
defined in \cite[\S 2.2]{NT}. We record the following proposition:
\begin{prop}\label{prop:og_coh}
\begin{enumerate}
\item For all $t \in \til{\TT}^S$ and $h \in H^*(\til{X}^P_{\til{K}}, \OO)$ we have $i^*(th) = r_P(t)i^*(h)$. 
\item For all $t \in \HH_{\OO}(\til{P}(\A_{F^+}^{\infty,\ov{S}}), \til{K}_P^{\ov{S}})$ and $h \in H^*(X_K, \OO)$ we have $j^*(r_G(t)h) = tj^*(h)$. 
\end{enumerate}
\end{prop}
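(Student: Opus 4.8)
The plan is to deduce both identities from the corresponding compatibilities at the level of Hecke correspondences, following the setup of \cite[\S 2.2, \S 3.1]{NT}. Each Hecke operator that appears acts on the cohomology of a locally symmetric space through a finite correspondence whose two legs are finite covering maps (guaranteed by neatness of $K$, $\til K_P$, $\til K$), so it suffices, for each generating double coset, to verify that pulling the correspondence back along $i$ (resp.\ pushing it forward along $j$) yields the correspondence attached to $r_P(t)$ (resp.\ $r_G(t)$); applying $H^*(-,\OO)$ then produces the asserted equivariance. Since $\til\TT^S$ and $\HH_\OO(\til P(\A_{F^+}^{\infty,\ov S}),\til K_P^{\ov S})$ are generated over $\OO$ by double-coset operators and $r_P$, $r_G$ are $\OO$-algebra homomorphisms, one reduces to a single place $\ov v$ of $F^+$ lying below a place $v\notin S$ and a single double coset there.

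For (1): recall that $\til X^P_{\til K}$ and $X^P_{\til K}$ are disjoint unions of arithmetic quotients of the symmetric space $X^P$, indexed by $P(F^+)\backslash\til G(\A_{F^+}^\infty)/\til K$ and $P(F^+)\backslash P(\A_{F^+}^\infty)/\til K_P$ respectively, with $i$ induced by the inclusion $P(\A_{F^+}^\infty)\hookrightarrow\til G(\A_{F^+}^\infty)$. Writing $\til K_v g\til K_v=\bigsqcup_\alpha g_\alpha\til K_v$ for $g\in\til G(F^+_{\ov v})$, the correspondence computing $[\til K_v g\til K_v]$ on $\til X^P_{\til K}$ carries a class $[x]$ to the classes $[xg_\alpha]$. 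I would pull this back along $i$ by keeping only those $g_\alpha$ for which $[xg_\alpha]$ is represented in $P(\A_{F^+}^\infty)$, sorting the surviving terms according to the left $\til K_{P,v}$-cosets of $P(F^+_{\ov v})$ that they meet, and checking that the outcome is exactly the correspondence for $r_P([\til K_v g\til K_v])$ --- which uses that $\til K$ is decomposed with respect to $P=GU$, so that $\til K_{P,v}=\til K_v\cap P(F^+_{\ov v})$ admits an Iwahori-type decomposition, and is in essence the computation already carried out in \cite[\S 2.2]{NT} defining the ``restriction to $P$'' map $r_P$. Separately, at the unramified level, one records that the composite $\til\TT^S\to\til\HH^S\xrightarrow{r_P}\HH_\OO(\til P(\A_{F^+}^{\infty,\ov S}),\til K_P^{\ov S})$ is the homomorphism used in the statement: a compatibility of the Satake polynomial $\til P_v(X)$ of \cite[Eq.~2.2.5]{10author} with the parabolic $P$, via $P=U\rtimes G$ and $G=\res_{\OO_F/\OO_{F^+}}\GL_n$.

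For (2): $j : X^P_{\til K}\to X_K$ is induced by $P\twoheadrightarrow G=P/U$ together with the identification of $K$ with the image of $\til K$ (equivalently $\til K_P$) in $G(\A_{F^+}^\infty)$, and $r_G$ is the ``integration along $U$'' homomorphism of \cite[\S 2.2]{NT}, i.e.\ the pushforward of Hecke data along $P\to G$. The same bookkeeping --- now with a double coset $\til K_{P,v} g\til K_{P,v}$ for $g\in\til P(F^+_{\ov v})$, tracking how its left $\til K_{P,v}$-cosets map onto left $K_v$-cosets of $G(F^+_{\ov v})$ under $P\to G$ and using the decomposed structure $\til K=\til K_G\ltimes\til K_U$ --- identifies $j$ applied to the correspondence for $t$ with the correspondence for $r_G(t)$ on $X_K$, and applying $H^*$ gives $j^*(r_G(t)h)=tj^*(h)$. (The section of $j$ from \cite[\S 3.1.1]{NT} is what makes $j^*$ split injective, but it is not needed for the equivariance.)

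The step I expect to be the only real work is the combinatorial matching of coset representatives in the two arguments above --- together with keeping the covering-space structure correct on both legs of each correspondence --- but as this is precisely the content of \cite[\S 2.2]{NT}, the substantive point is to confirm that the spaces $\til X^P_{\til K}$, $X^P_{\til K}$, $X_K$, the maps $i$, $j$, and the homomorphisms $r_P$, $r_G$ in the present setup are the ones to which the computations of \emph{loc.\ cit.} apply, and that our normalization of the Hecke operators --- including any twist by a power of $\delta_P$ entering the comparison between the boundary stratum $\til X^P$ and $X^P$ --- agrees with theirs. Granting this, both identities follow formally.
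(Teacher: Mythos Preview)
The paper does not give a proof of this proposition at all: it is stated with the preamble ``We record the following proposition,'' immediately after recalling that $r_P$ and $r_G$ are the homomorphisms defined in \cite[\S 2.2]{NT}, and the text moves on without further argument. Your sketch is therefore not being compared against a competing proof but rather supplying one where the paper is content to cite; your approach --- reducing to a single double coset, unwinding the correspondences on both sides, and invoking the coset computations of \cite[\S 2.2]{NT} together with the decomposed structure of $\til K$ --- is exactly the argument one would expect and is in line with what the paper implicitly relies on.
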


Consider the composite \[\mathcal{S} = r_G \circ r_P : \HH_{\OO}(\til{G}(\A_{F^+}^{\infty,\ov{S}}), \til{K}^{\ov{S}}) \to \HH_{\OO}(G(\A_{F^+}^{\infty, \ov{S}}), K^{\ov{S}}).\] By \cite[Proposition-Definition~5.3]{NT} this map coincides with the tensor product of maps $\OO[\Xi_{\ov{v}}]^{S_{2n}} \to \OO[\Xi_v]^{S_{n}}$ determined by the polynomial $\Sat_n(P_v(X)q_v^{n(2n-1)}P_{v^c}^\vee(q_v^{1-2n}X))$.

Let $\m \subset \TT^S$ be a non-Eisenstein maximal ideal of Galois type with residue field $k$. We have an associated continuous semisimple representation $\ov{\rho}_\m : G_{F, S} \to \GL_n(k)$ such that $\det(X - \ov{\rho}_\m(\Frob_v)) \equiv P_v(X) \mod \m$. Fix a tuple $(Q, (\alpha_v)_{v \in Q})$ where
\begin{itemize}
    \item $Q \subset S$ and $Q \cap Q^c = \varnothing$.
    \item Each place $v \in Q$ is split over $F^+$. Moreover, for each place $v \in Q$ there exists an imaginary quadratic subfield $F_0 \subset F$ such that $q_v$ splits in $F_0$. 
    \item For each place $v \in Q$, $\ov{\rho}_\m$ is unramified at $v$ and $v^c$ and $\alpha_v$ is a root of $\det(X - \ov{\rho}_\m(\Frob_v)))$.
\end{itemize}
For each $v \in Q$, let $d_v$ be multiplicity of $\alpha_v$ as a root of $\det(X - \ov{\rho}_\m(\Frob_v))$. Fix the partitions 
\[\mu_v : 2n = d_v + (n-d_v) + n\]
\[\nu_v : n = d_v + (n-d_v)\]
Let \[\Delta_v = \bigsqcup_{m \in M_{\mu_v}^+} [\p_{\mu_v,1}m\p_{\mu_v,1}] \subset \GL_n(F_v)\]

Now we recall the theory of Hecke algebras of a monoid from \cite[\S 2.1.9]{10author}. Specifically, we consider the restriction from $\til{G}$ to $P$
\[r_P : \HH(\iota_v^{-1}(\Delta_v), \iota_v^{-1}(\p_{\mu_v,1})) \to \HH(P(F_{\ov{v}}^+), P(F_{\ov{v}}^+) \cap \iota_v^{-1}(\p_{\mu_v,1}))\]
and integration along fibers 
\[r_G : \HH(P(F_{\ov{v}}^+), P(F_{\ov{v}}^+) \cap \iota_v^{-1}(\p_{\mu_v,1}) \to \HH(G(F_{\ov{v}}^+), G(F_{\ov{v}}^+) \cap \iota_v^{-1}(\p_{\mu_v,1}))\]
combined with the isomorphism
\[\HH(G(F_{\ov{v}}^+), G(F_{\ov{v}}^+) \cap \iota_v^{-1}(\p_{\mu_v,1})) \cong \HH(\GL_n(F_v) \times \GL_n(F_{v^c}), \p_{\nu_v,1} \times \GL_n(\OO_{F_{v^c}}))\]
we get a Satake transform 
\[\mathcal{S}_v^+ : \HH(\iota_v^{-1}(\Delta_v), \iota_v^{-1}(\p_{\mu_v,1})) \to \HH(\GL_n(F_v) \times \GL_n(F_{v^c}), \p_{\nu_v,1} \times \GL_n(\OO_{F_{v^c}}))\]
By the same argument as in the proof of \cref{prop:homplus} we see that $\mathcal{S}_v^+$ extends to a homomorphism
\[\mathcal{S}_v : \HH(\til{G}(F_{\ov{v}}^+), \iota_v^{-1}(\p_{\mu_v,1})) \to \HH(\GL_n(F_v) \times \GL_n(F_{v^c}), \p_{\nu_v,1} \times \GL_n(\OO_{F_{v^c}}))\]
This homomorphism fits into a commutative diagram 
\[
\begin{tikzcd}
    \OO[\til{\Xi}_{\ov{v},1}]^{S_{\mu_v}} \arrow{r} \arrow{d}{\mathcal{S}_v^f} & \HH(\til{G}(F_{\ov{v}}^+), \iota_v^{-1}(\p_{\mu_v,1})) \arrow{d}{\mathcal{S}_v} \\
    \OO[\Xi_{v,1}]^{S_{\nu_v}} \otimes_{\OO} \OO[\Xi_{v^c}]^{S_n} \arrow{r} & \HH(\GL_n(F_v) \times \GL_n(F_{v^c}), \p_{\nu_v,1} \times \GL_n(\OO_{F_{v^c}}))
\end{tikzcd}    
\]
where $\mathcal{S}_v^f$ is the unique homomorphism which corresponds the polynomial $\prod_{i=1}^{2n}(T - X_i)$ to the tuple of polynomials $\prod_{i=1}^{d_v}(T - X_i), \prod_{i=d_v+1}^{n}(T - X_i), \Sat_n(q_v^{n(2n-1)}P_{v^c}^\vee(q_v^{1-2n}X))$ and maps $\tau_{\ov{v}}$ to $\tau_v$.

We can define global Hecke algebras associated to our Taylor-Wiles data:
\[\til{\HH}^S_Q = \til{\HH}^S \otimes_{\Z} \bigotimes_{v \in Q} \HH(\til{G}(F_{\ov{v}}^+), \iota_v^{-1}(\p_{\mu_v,1}))\]
\[\til{\TT}^S_Q = \til{\TT}^S \otimes_{\Z} \bigotimes_{v \in Q} \OO[\til{\Xi}_{\ov{v},1}]^{S_{\mu_v}}\]
\[\HH^S_Q = \HH^S \otimes_{\Z} \bigotimes_{v \in Q} \HH(\GL_n(F_v) \times \GL_n(F_{v^c}), \p_{\nu_v,1} \times \GL_n(\OO_{F_{v^c}}))\]
\[\TT^S_Q = \TT^S \otimes_{\Z} \bigotimes_{v \in Q} \OO[\Xi_{v,1}]^{S_{\nu_v}} \otimes_{\OO} \OO[\Xi_{v^c}]^{S_n}\]

We can conclude this section with the following proposition:
\begin{prop}
    There exist homomorphisms $\mathcal{S}_Q^f : \til{\TT}^S_Q \to \TT^S_Q$ and $\mathcal{S}_Q : \til{\HH}^S_Q \to \HH^S_Q$ fitting into a commutative diagram 
    \[
    \begin{tikzcd}
        \til{\TT}^S_Q \arrow{r} \arrow{d}{\mathcal{S}_Q^f} & \til{\HH}^S_Q \arrow{d}{\mathcal{S}_Q} \\
        \TT^S_Q \arrow{r} & \HH^S_Q
    \end{tikzcd}    
    \]
    where $\mathcal{S}_Q^f$ coincides with $\mathcal{S}_v^f$ at places $v \in Q$ and with the Satake isomorphism from \cite[Proposition-Definition~5.3]{NT} at places $v \notin S$.
\end{prop}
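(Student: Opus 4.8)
The plan is to construct $\mathcal{S}_Q$ and $\mathcal{S}_Q^f$ as (restricted) tensor products of purely local maps, one for each place, and then to deduce commutativity of the big square from commutativity of the local squares, factor by factor. The four Hecke algebras in the statement are (restricted) tensor products over the set of places $v \notin S$ together with the set of places $v \in Q$ (places of $S \setminus Q$ contribute nothing), so it is enough to supply a commuting square of local maps at each place of these two types.

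At a place $v \notin S$ the relevant local factors of $\til{\TT}^S_Q$, $\til{\HH}^S_Q$, $\TT^S_Q$, $\HH^S_Q$ are, respectively, $\OO[\Xi_{\ov{v}}]^{S_{2n}}$, $\HH_\OO(\til{G}(F^+_{\ov{v}}), \til{G}(\OO_{F^+_{\ov{v}}}))$, the corresponding factor of $\TT^S$ at $v$, and $\HH(\GL_n(F_v) \times \GL_n(F_{v^c}), \GL_n(\OO_{F_v}) \times \GL_n(\OO_{F_{v^c}}))$. There I take the vertical maps to be the composite $\mathcal{S} = r_G \circ r_P$ of \cite[\S 2.2]{NT} on the Hecke side and the polynomial map determined by $\Sat_n(P_v(X) q_v^{n(2n-1)} P_{v^c}^\vee(q_v^{1-2n}X))$ on the coefficient side; this square commutes, since by \cite[Proposition-Definition~5.3]{NT} the transform $\mathcal{S}$ is given by precisely that polynomial map. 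At a place $v \in Q$ the relevant local factors are exactly the four objects appearing in the commutative square displayed immediately before the statement, so there I take $\mathcal{S}_v$ on the Hecke side and $\mathcal{S}_v^f$ on the coefficient side, and commutativity of that square has already been established.

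Next I glue. At each $v \notin S$ the local vertical map on the Hecke side carries the identity double coset $[\til{G}(\OO_{F^+_{\ov{v}}})]$ to the identity of $\HH(\GL_n(F_v) \times \GL_n(F_{v^c}), \GL_n(\OO_{F_v}) \times \GL_n(\OO_{F_{v^c}}))$, and the coefficient-side map sends unit to unit, so the tensor products over all places descend to the restricted tensor products and define $\OO$-algebra homomorphisms $\mathcal{S}_Q : \til{\HH}^S_Q \to \HH^S_Q$ and $\mathcal{S}_Q^f : \til{\TT}^S_Q \to \TT^S_Q$. The horizontal arrows of the big square are, by construction, the tensor products of the structure maps $\til{\TT}^S \to \til{\HH}^S$ and $\TT^S \to \HH^S$ fixed in \cref{sec:setup} with the horizontal arrows of the local squares at the places $v \in Q$; hence the big square commutes, being a tensor product of commuting squares, and $\mathcal{S}_Q^f$ visibly restricts to $\mathcal{S}_v^f$ at $v \in Q$ and to the Satake map of \cite[Proposition-Definition~5.3]{NT} at $v \notin S$.

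The one step requiring genuine care --- and the main obstacle --- is the bookkeeping of normalizations: one must check that the polynomial $\til{P}_v(X)$ of \cite[Eq.~2.2.5]{10author} used to define $\til{\TT}^S \to \til{\HH}^S$ is, after the Satake identifications, the input producing the polynomial map of \cite[Proposition-Definition~5.3]{NT}, so that the square at $v \notin S$ really commutes, and that the tuple of polynomials defining $\mathcal{S}_v^f$ (namely $\prod_{i=1}^{d_v}(T - X_i)$, $\prod_{i=d_v+1}^{n}(T - X_i)$, and $\Sat_n(q_v^{n(2n-1)} P_{v^c}^\vee(q_v^{1-2n}X))$) is matched to the horizontal arrows of the local square at $v \in Q$. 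Both compatibilities are essentially forced by the way the objects were set up, so once they are verified the proposition follows formally from the universal property of the (restricted) tensor product.
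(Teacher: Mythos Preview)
Your proposal is correct and follows essentially the same approach as the paper. In fact, the paper gives no separate proof of this proposition at all: it is stated as a summary (``We can conclude this section with the following proposition'') of the local constructions that precede it, and your write-up simply makes explicit the gluing by restricted tensor product that the paper leaves implicit.
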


Let $\til{K}$ be a good subgroup of $\til{G}(\A_{F^+}^\infty)$ such that $\til{K}^S = \til{G}(\widehat{\OO}_{F^+}^{\ov{S}})$ and $\til{K}$ is decomposed with respect to $P$. We can define subgroups $\til{K}_1(Q) \subset \til{K}_0(Q) \subset \til{K}$ as follows:
\begin{itemize}
    \item If $\ov{v} \notin \ov{Q}$, then $\til{K}_1(Q)_{\ov{v}} = \til{K}_0(Q)_{\ov{v}} = \til{K}_{\ov{v}}$.
    \item If $\ov{v} \in \ov{Q}$, then $\til{K}_1(Q)_{\ov{v}} = \iota_v^{-1}(\p_{\mu_v,1})$ and $\til{K}_0(Q)_{\ov{v}} = \iota_v^{-1}(\p_{\mu_v})$.
\end{itemize}

Let $K_1(Q), K_0(Q), K$ be the images in $G(\A_{F^+}^\infty)$ of the intersections of $\til{K}_1(Q), \til{K}_0(Q), \til{K}$ with $P(\A_{F^+}^\infty)$. From the definition we can see that all the subgroups from the previous sentence are decomposed with respect to $P$.
\begin{prop}
For $i = 0,1$ we have
\begin{enumerate}
\item The open immersion $i : X^P_{\til{K}_i(Q)} \to \til{X}^P_{\til{K}_i(Q)}$ yields a split epimorphism\\ $i^* : H^*(\til{X}^P_{\til{K}_i(Q)}, \OO) \to H^*(X^P_{\til{K}_i(Q)}, \OO)$.
\item The proper map $j : X^P_{\til{K}_i(Q)_P} \to X_{K_i(Q)}$ yields a split monomorphism\\ $j^* : H^*(X_{K_i(Q)}, \OO) \to H^*(X^P_{\til{K}_i(Q)}, \OO)$
\item For all $t \in \HH_{\OO}(\iota_v^{-1}(\Delta_v), \iota_v^{-1}(\p_{\mu_v,1}))$ and $h \in H^*(\til{X}^P_{\til{K}_i(Q)}, \OO)$ we have\\ $i^*(th) = r_P(t)i^*(h)$. 
\item For all $t \in \HH_{\OO}(\til{P}(\A_{F^+}^{\infty,\ov{S}}), {\til{K}_i(Q)}_P^{\ov{S}})$ and $h \in H^*(X_{K_i(Q)}, \OO)$ we have\\ $j^*(r_G(t)h) = tj^*(h)$. 
\end{enumerate}
\end{prop}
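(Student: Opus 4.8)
The plan is to deduce all four assertions, uniformly in $i \in \{0,1\}$, from the corresponding statements for the full level $\til{K}$ --- namely the discussion preceding \cref{prop:og_coh} together with \cref{prop:og_coh} itself --- using the facts, built into the construction, that $\til{K}_1(Q) \subset \til{K}_0(Q) \subset \til{K}$ are all decomposed with respect to the Levi decomposition $P = GU$ and coincide with $\til{K}$ at every place outside $\ov{Q} \subset \ov{S}$.

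For (1) and (2) nothing new is needed beyond the geometry already recorded. Since $\til{K}_i(Q)$ is decomposed with respect to $P$, we have $\til{X}^P_{\til{K}_i(Q)} = P(F^+)\backslash (X^P \times \til{G}(\A_{F^+}^\infty)/\til{K}_i(Q))$, a disjoint union of connected components indexed by $P(F^+)\backslash \til{G}(\A_{F^+}^\infty)/\til{K}_i(Q)$, and $X^P_{\til{K}_i(Q)}$ (indexed by $P(F^+)\backslash P(\A_{F^+}^\infty)/\til{K}_i(Q)_P$) comprises a subset of these components; hence $i$ is an open-and-closed immersion and $i^*$ is the projection of $H^*$ onto the corresponding summand, which is split surjective. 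For (2), the section of $j : X^P_{\til{K}_i(Q)_P} \to X_{K_i(Q)}$ is constructed exactly as in \cite[\S 3.1.1]{NT}, which only uses the Levi decomposition $P = GU$ together with the hypothesis that the level be decomposed with respect to $P$; so $j^*$ is a split monomorphism.

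Claim (4) is the analogue of \cref{prop:og_coh}(2) and requires no new idea: the operator $t$ lies in $\HH_\OO(\til{P}(\A_{F^+}^{\infty,\ov{S}}), \til{K}_i(Q)_P^{\ov{S}})$, and since $\til{K}_i(Q)_P^{\ov{S}} = \til{K}_P^{\ov{S}}$ away from $\ov{S}$, the identity $j^*(r_G(t)h) = t j^*(h)$ follows from the $\til{P}(\A_{F^+}^{\infty,\ov{S}})$-equivariance of $j$ exactly as in the proof of \cref{prop:og_coh}(2), now carried out at level $K_i(Q)$.

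The real content --- and where I expect the main difficulty --- is (3) at a place $v \in Q$, where $t$ ranges over the monoid Hecke algebra $\HH_\OO(\iota_v^{-1}(\Delta_v), \iota_v^{-1}(\p_{\mu_v,1}))$ (for $i = 0$ one restricts to the subalgebra of $\iota_v^{-1}(\p_{\mu_v})$-biinvariant operators) rather than over an ordinary spherical algebra. I would argue locally at $v$, relying on the fact that the standard parahorics $\iota_v^{-1}(\p_{\mu_v,1})$ and $\iota_v^{-1}(\p_{\mu_v})$ of $\til{G}(F^+_{\ov{v}})$ admit Iwahori decompositions with respect to the Siegel parabolic $P$. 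For $m \in M_{\mu_v}^+$ positive, the double coset operator $[\iota_v^{-1}(\p_{\mu_v,1})\, m\, \iota_v^{-1}(\p_{\mu_v,1})]$ decomposes into left cosets compatibly with this Iwahori decomposition, so that pulling back the associated cohomological correspondence on $\til{X}^P_{\til{K}_i(Q)}$ and then restricting along $i$ to $X^P_{\til{K}_i(Q)}$ agrees with first restricting and then applying the correspondence attached to $r_P(t)$; this is exactly what the ``restriction to $P$'' and ``integration along $N$'' maps for Hecke algebras of monoids, recalled above and set up in \cite[\S 2.1.9]{10author} following \cite[\S 2.2]{NT}, are designed to encode, and it is moreover compatible with the Satake transforms $\mathcal{S}_v^+$, $\mathcal{S}_v$ described earlier. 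The one point needing care is the interplay of the positivity condition $m \in M_{\mu_v}^+$ with the parahoric-level subgroups --- positivity is precisely what makes the products $\iota_v^{-1}(\p_{\mu_v,1})^-\, m\, \iota_v^{-1}(\p_{\mu_v,1})_M\, \iota_v^{-1}(\p_{\mu_v,1})^+$ and their translates behave as in the group case --- but this is a routine adaptation of the monoid Hecke formalism of the cited references, and granting it, (3) follows just as in the proof of \cref{prop:og_coh}(1).
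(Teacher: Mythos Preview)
Your proposal is correct and follows essentially the same approach as the paper. The paper's proof is a single sentence citing the discussion above \cref{prop:og_coh} together with \cite[Lemma~2.1.11]{10author}; you have unpacked the latter reference into an explicit argument via Iwahori decompositions of the parahorics with respect to the Siegel parabolic and the positivity condition on $M_{\mu_v}^+$, which is precisely what that lemma records.
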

\begin{proof}
    This follows from the discussion above \cref{prop:og_coh} and \cite[Lemma~2.1.11]{10author}.
\end{proof}

Now let $\m_Q \subset \TT^S_Q$ be the maximal ideal generated by $\m$ and the kernels of the maps 
$\OO[\til{\Xi}_{\ov{v},1}]^{S_{\mu_v}} \to k$ associated to the polynomials $(x-\alpha_v)^{d_v}, \det(X - \ov{\rho}_\m(\Frob_v))/(x-\alpha_v)^{d_v}, \det(X - \ov{\rho}_\m(\Frob_{v^c}))$ for $v \in Q$. Also let $\til{\m}_Q = {S_Q^f}^{-1}(\m_Q)$.
\begin{prop}\label{prop:descent}
    For $i=0,1$ the map $S_Q^f : \til{\TT}^S_Q \to \TT^S_Q$ descends to homomorphisms
    \[ \til{\TT}^S_Q(H^*(\til{X}^P_{\til{K}_i(Q)}, \OO)) \to \TT^S_Q(H^*(X_{K_i(Q)}, \OO))\]
    \[ \til{\TT}^S_Q(H^*(\partial \til{X}_{\til{K}_i(Q)}, \OO)_{\til{\m}}) \to \TT^S_Q(H^*(X_{K_i(Q)}, \OO)_\m)\]
\end{prop}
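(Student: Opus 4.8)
The plan is to obtain both maps as essentially formal consequences of the compatibilities recorded in the preceding proposition (and in \cref{prop:og_coh}) together with the commutative square relating $\mathcal{S}_Q^f$ to $\mathcal{S}_Q$; for the second map one additionally invokes the standard fact that, after localising at a non-Eisenstein maximal ideal, boundary cohomology is concentrated on the Siegel stratum. Recall that $\mathcal{S}_Q = r_G \circ r_P$ at all relevant places (the Satake map of \cite[Proposition-Definition~5.3]{NT} away from $S$, and the extension of $\mathcal{S}_v^+$ at places of $Q$), so the commutative square says that the image of $\mathcal{S}_Q^f(t)$ in $\HH^S_Q$ equals $r_G(r_P(\til{t}))$, where $\til{t}$ denotes the image of $t \in \til{\TT}^S_Q$ in $\til{\HH}^S_Q$.

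I would first construct the map on the Siegel-stratum Hecke algebras,
\[\til{\TT}^S_Q\bigl(H^*(\til{X}^P_{\til{K}_i(Q)}, \OO)\bigr) \to \TT^S_Q\bigl(H^*(X_{K_i(Q)}, \OO)\bigr).\]
Since the source is a quotient of $\til{\TT}^S_Q$ and the target a quotient of $\TT^S_Q$, it suffices to show that if $t \in \til{\TT}^S_Q$ annihilates $H^*(\til{X}^P_{\til{K}_i(Q)}, \OO)$ then $\mathcal{S}_Q^f(t)$ annihilates $H^*(X_{K_i(Q)}, \OO)$. By parts (1) and (3) of the preceding proposition (with part (1) of \cref{prop:og_coh} at the places outside $Q$), the action of $t$ on $H^*(\til{X}^P_{\til{K}_i(Q)}, \OO)$ is computed, after applying $i^*$, by the operator $r_P(\til{t})$ on $H^*(X^P_{\til{K}_i(Q)}, \OO)$; and the action of $\TT^S_Q$ on $H^*(X_{K_i(Q)}, \OO)$ factors through $\TT^S_Q \to \HH^S_Q$, so $\mathcal{S}_Q^f(t)$ acts there as $r_G(r_P(\til{t}))$. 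Now fix $h' \in H^*(X_{K_i(Q)}, \OO)$. The $j^*$-compatibility (part (4) of the preceding proposition and part (2) of \cref{prop:og_coh}, together with the analogous statement at the places of $Q$, cf. \cite[Lemma~2.1.11]{10author}) gives
\[j^*\bigl(r_G(r_P(\til{t}))\, h'\bigr) = r_P(\til{t})\, j^*(h') \in H^*(X^P_{\til{K}_i(Q)}, \OO).\]
Since $i^*$ is a split epimorphism, write $j^*(h') = i^*(\til{h})$ with $\til{h} \in H^*(\til{X}^P_{\til{K}_i(Q)}, \OO)$; then $r_P(\til{t})\, i^*(\til{h}) = i^*(\til{t}\,\til{h}) = 0$ by hypothesis. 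As $j^*$ is a split monomorphism this forces $\mathcal{S}_Q^f(t)\, h' = 0$, and since $h'$ was arbitrary the descended map exists; it is an $\OO$-algebra homomorphism because $\mathcal{S}_Q^f$ is.

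For the boundary version I would localise. Because $\til{\m}_Q = (\mathcal{S}_Q^f)^{-1}(\m_Q)$, the map $\mathcal{S}_Q^f$ sends $\til{\m}_Q$ into $\m_Q$ and localises to $(\til{\TT}^S_Q)_{\til{\m}_Q} \to (\TT^S_Q)_{\m_Q}$; localisation being exact, the argument above applies verbatim to the localised cohomology and produces a homomorphism out of $\til{\TT}^S_Q(H^*(\til{X}^P_{\til{K}_i(Q)}, \OO)_{\til{\m}})$. It then remains to relate this to the full boundary: as in \cite{NT} and \cite{10author}, localising at the non-Eisenstein maximal ideal $\til{\m}$ annihilates the cohomology of the boundary strata of $\til{X}_{\til{K}_i(Q)}$ attached to standard parabolics other than the Siegel parabolic $P$ (their Hecke eigensystems are Eisenstein, as $\ov{\rho}_{\m}$ is irreducible), so the spectral sequence of the stratification identifies $\til{\TT}^S_Q(H^*(\partial \til{X}_{\til{K}_i(Q)}, \OO)_{\til{\m}})$ with a quotient of $\til{\TT}^S_Q(H^*(\til{X}^P_{\til{K}_i(Q)}, \OO)_{\til{\m}})$, compatibly with $\mathcal{S}_Q^f$. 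Composing with the map already constructed gives the desired homomorphism.

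The hard part will be the adaptation of this non-Eisenstein concentration argument to the parahoric setting, since at the places of $Q$ the Siegel stratum carries level $\iota_v^{-1}(\p_{\mu_v,1})$: the cohomology of the other boundary strata at this level, together with the combinatorics of the maps $r_P, r_G$ between the relevant Levi and parahoric Hecke algebras, is more delicate than in the hyperspecial case of \cite{NT}. However, the vanishing after localisation ultimately uses only the irreducibility hypothesis on $\ov{\rho}_{\m}$ and the block structure of the Levi subgroups of $\til{G}$, so the argument of \cite{NT} carries over; the stratum-level input, which is the other ingredient, is exactly the content of the preceding proposition, so modulo that the first displayed map is purely formal.
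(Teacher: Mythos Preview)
Your argument for the first displayed map is essentially identical to the paper's: both reduce to showing that $t$ in the annihilator of $H^*(\til{X}^P_{\til{K}_i(Q)}, \OO)$ forces $\mathcal{S}_Q^f(t)$ to annihilate $H^*(X_{K_i(Q)}, \OO)$, and both push a class through $j^*$, lift along the split surjection $i^*$, and use the Hecke compatibilities (3)--(4) of the preceding proposition. The paper just names the splittings $\alpha, \beta$ explicitly and writes the chain of equalities in one line.

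For the second map there is a small slip. You say the stratification spectral sequence exhibits the boundary Hecke algebra as a \emph{quotient} of the Siegel-stratum Hecke algebra and then speak of ``composing'' with the map already built. But a surjection $\til{\TT}^S_Q(H^*(\til{X}^P)_{\til{\m}}) \twoheadrightarrow \til{\TT}^S_Q(H^*(\partial \til{X})_{\til{\m}})$ goes the wrong way for that: to get a map \emph{out of} the boundary Hecke algebra you need the annihilator of the boundary cohomology to be contained in that of the Siegel stratum, not the reverse. What actually holds (and what you implicitly argue when you say the other strata vanish after localisation) is that $H^*(\partial \til{X}_{\til{K}_i(Q)}, \OO)_{\til{\m}} \cong H^*(\til{X}^P_{\til{K}_i(Q)}, \OO)_{\til{\m}}$ as $\til{\TT}^S_Q$-modules, so the two Hecke algebras are \emph{isomorphic}; with that correction your ``composition'' makes sense and the argument is fine. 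The paper simply invokes this isomorphism directly as \cite[Theorem~2.4.2]{10author}.

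Your closing worry about adapting the non-Eisenstein concentration to parahoric level at $Q$ is unnecessary: the result \cite[Theorem~2.4.2]{10author} is stated for arbitrary good level and depends only on the decomposed-generic (hence non-Eisenstein) nature of $\til{\m}$, not on the specific shape of $\til{K}_i(Q)_{\ov{v}}$ at $\ov{v} \in \ov{Q}$. So there is no ``hard part'' here beyond what is already in the literature.
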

\begin{proof}
    To prove the first statement, we need to show that for $t \in \Ann_{\til{\TT}^S_Q}(H^*(\til{X}^P_{\til{K}_i(Q)}, \OO))$ we have $S_Q(t) \in \Ann_{\TT^S_Q}(H^*(X_{K_i(Q)}, \OO))$. Let $\alpha$ be the right inverse of $i^*$, and $\beta$ be the left inverse of $j^*$. Take any $h \in H^*(X_{K_i(Q)}, \OO)$. Then we can write 
    \begin{align*} 
        S_Q(t)h & = r_G(r_P(t))h = \beta(j^*(r_G(r_P(t))h)) = \beta(r_P(t)j^*(h)) \\ & = \beta(r_P(t)i^*(\alpha(j^*(h)))) = \beta(i^*(t \alpha(j^*(h)))) = \beta(i^*(0)) = 0.
    \end{align*}
    To prove the second statement, it is enough to note that $H^*(\til{X}^P_{\til{K}_i(Q)}, \OO)_{\til{m}} \cong H^*(\partial \til{X}_{\til{K}_i(Q)}, \OO)_{\til{\m}}$ by \cite[Theorem~2.4.2]{10author}.
\end{proof}

\section{Galois deformation theory}\label{sec:galois}

Let $E \subset \ov{\Q}_p$ be a finite extension of $\Q_p$,
with valuation ring $\OO$, uniformizer $\varpi$, and residue field $k$. Given a complete
Noetherian local $\OO$-algebra $\Lambda$ with residue field $k$, we let $\text{CNL}_\Lambda$ denote the
category of complete Noetherian local $\Lambda$-algebras with residue field $k$. We
refer to an object in $\text{CNL}_\Lambda$ as a $\text{CNL}_\Lambda$-algebra.
We fix a number field $F$, and let $S_p$ be the set of places of $F$ above $p$.
We assume that $E$ contains the images of all embeddings of $F$ in $\Q_p$. We also
fix a continuous absolutely irreducible homomorphism $\ov{\rho}: G_F \to \GL_n(k)$. We
assume throughout that $p \nmid 2n$. 

Following \cite[Definition~6.2.2]{10author} we call a global deformation problem a tuple
\[\mathcal{S} = (\ov{\rho}, S, \{\Lambda_v\}_{v \in S}, \{\mathcal{D}_v\}_{v \in S})\] where 
\begin{itemize}
\item S is a finite set of finite places of $F$ containing $S_p$ and all the places at which $\ov{\rho}$ is ramified. 
\item $\Lambda_v$ is an object of $\text{CNL}_\OO$ for each $v \in S$.
\item $\mathcal{D}_v$ is a local deformation problem (\cite[\S 6.2.1]{10author}) for each $v \in S$.
\end{itemize}
Associated to this global deformation problem we have a completed tensor product $\Lambda = \widehat{\otimes}_{v \in S}\Lambda_v$.
A global deformation problem determines a representable functor $\mathcal{D}_{\mathcal{S}} : \text{CNL}_\Lambda \to \textbf{Set}$ which takes an object $A \in \text{CNL}_\Lambda$ to the set of deformations $\rho : G_F \to \GL_n(A)$ of type $\mathcal{S}$. 

Let $v$ be a finite place of $F$ such that $v \notin S$ and $q_v \equiv 1 \pmod p$. We let $\mathcal{D}_v^1$ denote the local deformation problem consisting of all lifts which associate $A \in \text{CNL}_{\Lambda_v}$ to the set of lifts which are $1+M_n(\m_A)$-conjugate to a lift of the form $s_v \oplus \psi_v$, where $s_v$ is unramified and the image of $\psi_v$ under inertia is contained in the set of scalar matrices. This is indeed a local deformation problem, as is shown in \cite[Lemma~4.2]{Thorne12}.

\begin{lem}\label{lem:lift}
Let $\ov{r} : G_{F_v} \to \GL_n(k)$ be an unramified continuous representation, and $A$ is a complete noetherian local $\OO$-algebra with residue field $k$ and a principal maximal ideal $\m_A$. Suppose further that $\ov{r}$ may be written in the form $\ov{r} = \ov{r}_1 \oplus \ov{r}_2$, where $\det(X - \ov{r}_1(\Frob_v))$ and $\det(X - \ov{r}_2(\Frob_v))$ are relatively prime. Also suppose that $q_v = 1$ in $k$. Then any lift $r : G_{F_v} \to \GL_n(A)$ of $\ov{r}$ is $1+M_n(\m_A)$-conjugate to one of the form $r = r_1 \oplus r_2$ where $r_1$ and $r_2$ are lifts of $\ov{r}_1$ and $\ov{r}_2$ respectively.
\end{lem}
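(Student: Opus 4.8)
The plan is to reduce the statement to one about the pair of matrices $(\Phi,\Sigma)=(r(\varphi_v),r(\tau_v))$ attached to a lift of Frobenius $\varphi_v$ and a generator $\tau_v$ of tame inertia, then to split off $\Phi$ by lifting idempotents, and finally to force the off-diagonal part of $\Sigma$ to vanish by $\m_A$-adic approximation, using the tame relation $\Phi\Sigma\Phi^{-1}=\Sigma^{q_v}$ together with the congruence $q_v\equiv 1 \pmod p$. Write $n_i = \dim\ov r_i$, so $n_1+n_2=n$.

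\emph{Reduction to the tame quotient.} Since $v\notin S\supseteq S_p$, the place $v$ lies over a rational prime $\ell\neq p$, so the wild inertia subgroup $P_v\subseteq I_{F_v}$ is pro-$\ell$. On the other hand $1+M_n(\m_A)$ is pro-$p$, because $A$ is $\m_A$-adically complete with residue field of characteristic $p$; and $r(I_{F_v})\subseteq 1+M_n(\m_A)$ because $\ov r$ is unramified. Hence $r(P_v)=1$, so $r$ factors through the tame quotient of $G_{F_v}$ and is determined by $\Phi=r(\varphi_v)\in\GL_n(A)$ and $\Sigma=r(\tau_v)\in 1+M_n(\m_A)$, subject to $\Phi\Sigma\Phi^{-1}=\Sigma^{q_v}$. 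It now suffices to simultaneously block-diagonalize $\Phi$ and $\Sigma$ by an element of $1+M_n(\m_A)$.

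\emph{Splitting off $\Phi$.} The polynomial $\det(X-\Phi)\in A[X]$ reduces mod $\m_A$ to $\det(X-\ov r_1(\varphi_v))\,\det(X-\ov r_2(\varphi_v))$, a product of coprime monic polynomials; since $A$ is complete local, Hensel's lemma gives a factorization $\det(X-\Phi)=P_1P_2$ into coprime monic polynomials reducing to these factors. Transporting the Chinese remainder decomposition $A[X]/(\det(X-\Phi))\cong A[X]/(P_1)\times A[X]/(P_2)$ along the Cayley--Hamilton surjection onto $A[\Phi]\subseteq M_n(A)$ produces orthogonal idempotents $e_1,e_2\in A[\Phi]$ with $e_1+e_2=1$; they commute with $\Phi$, and, $\ov r=\ov r_1\oplus\ov r_2$ being already block-diagonal, they reduce mod $\m_A$ to the standard block projectors of sizes $n_1,n_2$. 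As $A$ is local, the modules $e_iA^n$ are free; choosing $A$-bases lifting the standard basis of $A^n$ yields $u\in 1+M_n(\m_A)$ with $u^{-1}e_iu$ equal to the standard projectors. Replacing $r$ by $u^{-1}ru$ (a conjugation by $1+M_n(\m_A)$), we may assume $\Phi=\mathrm{diag}(\Phi_1,\Phi_2)$.

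\emph{Splitting off $\Sigma$; the main point.} This is the step I expect to be the crux. Write $\Sigma=D+N$, with $D$ the block-diagonal part and $N$ the off-diagonal part, so $N\in M^{\mathrm{od}}(\m_A)$ where $M^{\mathrm{od}}(A)=M_{n_1\times n_2}(A)\oplus M_{n_2\times n_1}(A)$; I want $N=0$. Conjugation $c_\Phi$ by $\Phi$ preserves $M^{\mathrm{od}}(A)$, and $c_\Phi-1$ is an automorphism of this finite free $A$-module: modulo $\m_A$ its eigenvalues over $\ov k$ have the form $\alpha\beta^{-1}-1$ with $\alpha$ a root of $\det(X-\ov r_1(\varphi_v))$ and $\beta$ a root of $\det(X-\ov r_2(\varphi_v))$ (or vice versa), hence are nonzero by coprimality, so $c_\Phi-1$ is invertible mod $\m_A$ and therefore invertible. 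Now suppose $N\in M^{\mathrm{od}}(\m_A^{\,j})$ for some $j\geq 1$. Expanding $\Sigma^{q_v}=(D+N)^{q_v}$ and collecting off-diagonal terms: the words containing at least three factors $N$ lie in $M^{\mathrm{od}}(\m_A^{\,3j})\subseteq M^{\mathrm{od}}(\m_A^{\,j+1})$, while the sum of the one-$N$ words is $\sum_{a=0}^{q_v-1}D^aND^{q_v-1-a}$, which is congruent to $q_vN$ modulo $\m_A^{\,j+1}$ since $D\equiv 1\pmod{\m_A}$, and $q_vN\equiv N\pmod{\m_A^{\,j+1}}$ since $q_v-1\in\m_A$ (because $p\in\m_A$). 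Comparing off-diagonal parts of $\Phi\Sigma\Phi^{-1}=\Sigma^{q_v}$ gives $(c_\Phi-1)(N)\in M^{\mathrm{od}}(\m_A^{\,j+1})$, whence $N\in M^{\mathrm{od}}(\m_A^{\,j+1})$ as $(c_\Phi-1)^{-1}$ is $A$-linear. Iterating from $j=1$ yields $N\in\bigcap_j\m_A^{\,j}M_n(A)=0$ by the Krull intersection theorem. Hence $\Sigma=\mathrm{diag}(\Sigma_{11},\Sigma_{22})$, and $r=r_1\oplus r_2$ where $r_i$ is the tame representation determined by $(\Phi_i,\Sigma_{ii})$; each $r_i$ lifts $\ov r_i$ because every conjugation used was trivial modulo $\m_A$.
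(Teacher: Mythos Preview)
Your proof is correct and takes a genuinely different route from the paper's. The paper proceeds by successive approximation on the full group $G_{F_v}$: assuming $r$ is block-diagonal modulo $\m_A^n$, it seeks a conjugation in $1+M_n(\m_A^n)$ making it block-diagonal modulo $\m_A^{n+1}$, and identifies the obstruction as a class in $H^1(G_{F_v},\Hom(\ov r_2,\ov r_1)\otimes_k \m_A^n/\m_A^{n+1})$; this vanishes by the local Euler characteristic formula and Tate duality (coprimality kills $H^0$, and $q_v\equiv 1\pmod p$ makes the Tate twist trivial on Frobenius, killing $H^2$). Your approach instead reduces at the outset to the two-generator tame presentation, block-diagonalizes $\Phi$ once and for all via idempotent lifting, and then shows no further conjugation is needed: the tame relation $\Phi\Sigma\Phi^{-1}=\Sigma^{q_v}$ itself forces the off-diagonal part of $\Sigma$ to vanish $\m_A$-adically. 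The two hypotheses play the same role in both arguments --- coprimality makes $c_\Phi-1$ invertible on off-diagonal blocks (equivalently, $H^0=0$), and $q_v\equiv 1$ collapses $q_vN$ to $N$ (equivalently, trivializes the Tate twist) --- but your version is more elementary in that it avoids Galois cohomology and makes the mechanism entirely explicit. One small point: your justification ``$v\notin S\supseteq S_p$'' appeals to context not in the lemma statement; the intrinsic reason $v\nmid p$ is simply that $q_v=1$ in $k$, since a power of $p$ would be $0$ there rather than $1$.
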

\begin{proof}
    Let $n_i = \dim \ov{r}_i$. Suppose we have a lift $r_n : G_{F_v} \to \GL_n(A)$ of $\ov{r}$ such that $r_n \bmod\;\m_A^n$ can be written in the form $r_1 \oplus r_2$. We will show that there exists a matrix $X_n \in 1+M_n(\m_A^n)$ such that $r_{n+1} \coloneqq X_nr_nX_n^{-1}$ satisfies the same condition $\bmod\;\m_A^{n+1}$. 
    Write 
    \[X_n = \begin{pmatrix}
        1 & Y \\
        Z & 1
    \end{pmatrix} \qquad r_n = \begin{pmatrix}
        A & B \\
        C & D
    \end{pmatrix}\]
    where $Y, Z \in M_n(\m_A^n)$. Then the condition on $r_{n+1}$ transforms into 
    \begin{equation}YD - AY + B = 0 \bmod\;\m_A^{n+1}\end{equation}
    \begin{equation}ZA - DZ + C = 0 \bmod\;\m_A^{n+1}\end{equation}
    We will focus on the first condition, the second is similar. We know that $r_n \mod \m_A^n$ is block-diagonal, so we can can consider $\ov{b}, \ov{y}$ to be the images of $B$ and $Y$ respectively in $\m_A^{n}/\m_A^{n+1}$. 
    \begin{equation}\label{eq:b1}\ov{b}\ov{r}_2^{-1} = \ov{r}_1\ov{y}\ov{r}_2^{-1} - \ov{y}\end{equation} in $M_n(\m_A^{n}/\m_A^{n+1}) = M_n(k) \otimes_k \m_A^{n}/\m_A^{n+1}$.
    Using the fact that $r$ is a homomorphism, for $\sigma, \tau \in G_{F_v}$ we can write 
    \[A(\sigma)B(\tau) + B(\sigma)D(\tau) = B(\sigma\tau)\]
    Rewriting and reducing $\bmod\;\m_A^{n+1}$ we get 
    \[\ov{r}_1(\sigma)\ov{b}(\tau) + \ov{b}(\sigma)\ov{r}_2(\tau) = \ov{b}(\sigma\tau)\]
    \begin{equation}\ov{b}(\sigma\tau)\ov{r}_2^{-1}(\sigma\tau) = \ov{r}_1(\sigma)\ov{b}(\tau)\ov{r}_2^{-1}(\tau)\ov{r}_2^{-1}(\sigma) + \ov{b}(\sigma)\ov{r}_2^{-1}(\sigma)\end{equation}
    Give $M_{n_1 \times n_2}(\m_A^{n}/\m_A^{n+1})$ the structure of a $G_{F_v}$-module via $\ov{r}_1(-)\ov{r}_2^{-1}$, and denote this module $\ad(\ov{r}_1, \ov{r}_2)$. Then the last equation implies that $\ov{b}\ov{r}_2^{-1}$ is in $Z^1(G_{F_v}, \ad(\ov{r}_1, \ov{r}_2))$. 
    Since $\ov{r}_1,\ov{r}_2$ have coprime characteristic polynomials we know that $H^1(G_{F_v},\ad(\ov{r}_1, \ov{r}_2)) = 0$ by local Tate duality (here we are using that $q_v = 1$ in $k$), which means\\ 
    $\ov{b}\ov{r}_2^{-1} \in B^1(G_{F_v}, \ad(\ov{r}_1, \ov{r}_2))$ and thus we can find $y$ satisfying \cref{eq:b1}.
\end{proof}

Now we define our version of the Taylor-Wiles datum, analogous to the one appearing in \cite[\S 6.2.27]{10author}.
\begin{defn}
Let \[\mathcal{S} = (\ov{\rho}, S, \{\Lambda_v\}_{v \in S}, \{\mathcal{D}_v\}_{v \in S})\] be a global deformation problem. A Taylor-Wiles datum of level $N \geq 1$ for $\mathcal{S}$ consists of a tuple $(Q, {\alpha_v}_{v \in Q})$ where 
\begin{itemize}
\item A finite set $Q$ of places of $F$, disjoint from $S$, such that $q_v \equiv 1 \pmod {p^N}$ for each $v \in Q$. 
\item For each $v \in Q$, $\alpha_v$ is an eigenvalue of $\ov{\rho}(\Frob_v)$. 
\end{itemize}
\end{defn}
Given a Taylor-Wiles datum $(Q, (\alpha_v))$ we define a global deformation problem
\[\mathcal{S}_Q = (\ov{\rho}, S \cup Q, \{\Lambda_v\}_{v \in S} \cup \{\OO_{F_v}\}_{v \in Q}, \{\mathcal{D}_v\}_{v \in S} \cup \{\mathcal{D}_v^1\}_{v \in Q})\]
 Define $\Delta_Q = \prod_{v \in Q} \Delta_v$. The representing object $R_{\mathcal{S}_Q}$ has a structure of a $\OO[\Delta_Q]$-algebra, satisfying $R_{\mathcal{S}_Q} \otimes_{\OO[\Delta_Q]} \OO = R_{\mathcal{S}}$. 
\begin{prop}\label{prop:killing_selmer}
    Take $T=S$, and let $q > h^1_{\mathcal{S}^\perp, T}(\ad \ov{\rho}(1) )$. Assume that $F = F^+F_0$ where $F_0$ is an imaginary quadratic field, that $\zeta_p \notin F$ and that $\ov{\rho}(G_{F(\zeta_p)})$ is adequate. Then for every $N \geq 1$, there exists a choice of Taylor-Wiles datum $(Q_N, (\alpha_v)_{v \in Q})$ of level $N$ satisfying the following:
    \begin{enumerate}
        \item $\lvert Q_N \rvert = q$.
        \item For each $v \in Q_N$, the rational prime below $v$ splits in $F_0$ and $v^c \notin Q_N$. 
        \item Let $g = q - n^2[F^+ : \Q]$. Then there is a surjective morphism 
        \[R^{T, loc}_{\mathcal{S}}[[X_1, \ldots, X_g]] \to R^T_{\mathcal{S}_Q}.\]
        in $\text{CNL}_\Lambda$. 
    \end{enumerate}
\end{prop}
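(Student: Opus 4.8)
The plan is to run the Taylor--Wiles method in the form of \cite[\S 6.2]{10author}, replacing the usual Chebotarev argument by the one developed in \cite[\S 4]{Thorne12}, which requires only adequacy rather than enormous image of $\ov{\rho}(G_{F(\zeta_p)})$.

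First I would isolate the local input at a Taylor--Wiles prime $v \in Q$. By \cref{lem:lift} together with \cite[Lemma~4.2]{Thorne12}, the local deformation problem $\mathcal{D}_v^1$ is the condition that a lift be $(1+M_n(\m_A))$-conjugate to $s_v \oplus \psi_v$ with $s_v$ unramified of rank $n-d_v$ and $\psi_v$ of rank $d_v$ with scalar restriction to $I_{F_v}$, where $d_v$ is the multiplicity of $\alpha_v$ as a root of $\det(X-\ov{\rho}(\Frob_v))$; the associated framed lifting ring is formally smooth over $\Lambda_v$, and the tangent space of $\mathcal{D}_v^1$, inside $H^1(G_{F_v}, \ad\ov{\rho})$, has dimension exactly $h^0(G_{F_v}, \ad\ov{\rho}) + 1$ (the tame scalar twist accounting for the ``$+1$''). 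Granting this, part (1) together with the requirement $\lvert Q_N \rvert = q$ reduces (3) to pure bookkeeping: once the dual Selmer group $H^1_{\mathcal{S}_Q^\perp, T}(\ad\ov{\rho}(1))$ is made to vanish, the Greenberg--Wiles formula --- in which, exactly as in \cite[\S 6.2]{10author}, each prime of $Q_N$ contributes $+1$ and each of the $[F^+:\Q]$ complex places of $F$ contributes $-\dim_k \ad\ov{\rho} = -n^2$ --- presents $R^T_{\mathcal{S}_Q}$ as a quotient of $R^{T, loc}_{\mathcal{S}}[[X_1, \ldots, X_g]]$ with $g = q - n^2[F^+:\Q]$.

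So the real content is the existence, for each $N \geq 1$, of a Taylor--Wiles datum $(Q_N, (\alpha_v))$ of level $N$ with $\lvert Q_N \rvert = q$, $v^c \notin Q_N$, the rational prime below each $v$ split in $F_0$, and $H^1_{\mathcal{S}_Q^\perp, T}(\ad\ov{\rho}(1)) = 0$. I would build $Q_N$ by killing dual Selmer classes one at a time. Using $\zeta_p \notin F$ and the three cohomological adequacy conditions $H^0(\ov{\rho}(G_{F(\zeta_p)}), \ad^0 V) = 0$, $H^1(\ov{\rho}(G_{F(\zeta_p)}), k) = 0$, $H^1(\ov{\rho}(G_{F(\zeta_p)}), \ad^0 V) = 0$, an inflation--restriction argument identifies $H^1_{\mathcal{S}^\perp, T}(\ad\ov{\rho}(1))$ with a subspace of $\Hom_{G_F}(G_M, \ad\ov{\rho}(1))$ for a suitable finite Galois extension $M/F$ (assembled from $F(\zeta_{p^N})$ and the fixed field of $\ker(\ad\ov{\rho})$), over which $\ad\ov{\rho}(1)$ is a trivial module. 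Given a nonzero class $[\phi]$, I want $\sigma \in G_F$ whose image in $\operatorname{Gal}(M/F)$ is cyclotomically trivial (so any $v$ with $\Frob_v$ conjugate to $\sigma$ has $q_v \equiv 1 \pmod{p^N}$), with $\ov{\rho}(\sigma)$ a prescribed semisimple element admitting an eigenvalue $\alpha$ (which we take as $\alpha_v$, of multiplicity $d_v$), and such that adjoining $v$ strictly shrinks the dual Selmer group --- i.e. $\operatorname{res}_v[\phi]$ is not orthogonal to the tangent space of $\mathcal{D}_v^1$. This is where adequacy condition (4) is used: writing $\ad^0 V = \bigoplus_a W_a$ as a sum of irreducible $k[\ov{\rho}(G_{F(\zeta_p)})]$-modules, for each $a$ there are $g_a$ and an eigenvalue $\alpha_a$ with $\operatorname{tr}(e_{g_a, \alpha_a}\vert_{W_a}) \neq 0$, and together with $H^0(\ov{\rho}(G_{F(\zeta_p)}), \ad^0 V) = 0$ this provides, after possibly modifying $\sigma$ by an element of $G_M$, enough room to detect $[\phi]$, exactly as in \cite[\S 4]{Thorne12} (using the adequacy formalism of \cite[\S 2]{Thorne12}). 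Iterating over a basis of the dual Selmer group --- possible since $q > h^1_{\mathcal{S}^\perp, T}(\ad\ov{\rho}(1))$ --- and then adjoining further auxiliary primes to reach cardinality exactly $q$ produces $Q_N$. The conditions that $v$ split over $F^+$, that $v^c \notin Q_N$, and that $\ell_v$ split in $F_0$ are imposed simultaneously: since $F^+$ (totally real) and $F_0$ (imaginary quadratic) are linearly disjoint, these cut out Chebotarev sets independent of the previous ones, as in \cite[\S 6.2]{10author}.

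The main obstacle is precisely this Chebotarev step. In the enormous-image case one chooses $v$ with $\ov{\rho}(\Frob_v)$ regular semisimple with all eigenvalues distinct and the class visibly nonzero on the $\alpha$-eigenline, which makes the local computation immediate; under mere adequacy the chosen eigenvalue $\alpha$ may have multiplicity $d_v > 1$ --- which is exactly why the local theory of \cref{sec:reps} and \cref{sec:local} is set up with the two-block parahoric of block sizes $d_v$ and $n - d_v$ rather than with the Iwahori --- and one is forced into the finer trace/linear-algebra argument of \cite[\S 4]{Thorne12}, tracking the decomposition of $\ad^0 V$ into irreducibles and using all four adequacy conditions. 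A secondary point, absent from \cite{Thorne12} (which works over a totally real base and with $\GL_n$ directly), is to keep the Chebotarev conditions coming from $F_0$ --- those that make the $\Delta_{Q_N}$-structure on $R^T_{\mathcal{S}_Q}$ and the boundary-cohomology comparison of the preceding sections available --- compatible with the adequacy-driven ones; but this is a linear-disjointness check and goes through as in \cite[\S 6.2]{10author}.
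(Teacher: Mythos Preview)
Your proposal is correct and is precisely the argument the paper has in mind: the paper's own proof says only that it is ``very similar to the proof of \cite[Proposition~6.2.32]{10author} (cf.\ \cite[Proposition~4.4]{Thorne12})'' and omits the details, and what you have written is a faithful expansion of that --- the Greenberg--Wiles bookkeeping from \cite[\S 6.2]{10author} together with the adequacy-based Chebotarev step of \cite[\S 4]{Thorne12}, plus the linear-disjointness check for the $F_0$-splitting condition.
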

\begin{proof}
    The proof is very similar to the proof of \cite[Proposition~6.2.32]{10author} (cf. \cite[Proopsition~4.4]{Thorne12}), we omit the details. 
\end{proof}
 
\section{Representations into Hecke algebras}
In this section we construct the necessary Galois representations into the Hecke algebras associated to $G$. From \cref{prop:descent} we know that we can create representations valued in the Hecke algebra acting on $H^*(X_{K_i(Q)}, \OO)_{\m_Q}$ from representations valued in the Hecke algebra acting on $H^*(\partial \til{X}_{\til{K}_i(Q)}, \OO)_{\til{\m}_Q}$. The latter representations will be constructed by glueing together Galois representations associated to cuspidal cohomological automorphic representations of $\til{G}(\A_{F^+}^\infty)$ as in \cite{Scholze15} and using the local computations of \cref{sec:local}.
\subsection{Hecke algebras for $\til{G}$}
\begin{thm}\label{thm:det_rel_c}
    Suppose that $\til{K} \subset \til{G}(\A_{F^+}^\infty)$ be a good subgroup which is decomposed with respect to $P$. Then there exists a $2n$-dimensional $\til{\TT}^S_Q(H_c^*(X_{\til{K}_1(Q)}, \OO))/I$-valued group determinant $D_{c,Q}$ of $G_{F, S}$ for some ideal $I$ of nilpotence degree depending only on $n$ and $[F:\Q]$ such that the following properties hold:
    \begin{enumerate}
        \item If $v \notin S$ is a place of $F$, then $D_{c, Q}(X - \Frob_v)$ is equal to the image of $\til{P}_v(X)$ in $\til{\TT}^S_Q(H_c^*(X_{\til{K}_1(Q)}, \OO))/I[X]$.
        \item If $v \notin Q$, then for any $\sigma \in G_{F, S}$ and $\tau \in I_{F_v}$ we have the relation 
            \[\Tr_{D_{c, Q}}\left(\sigma\res_{q_v, \mu_v}^{(2n)!}\res_{\mu_v}^{(2n)!}\left(\sum \limits_{i = 1}^{k-1} E_{\mu_v, i}(\varphi_v) + \langle \textnormal{Art}_{F_v}^{-1}(\tau) \rangle E_{\mu_v, k}(\varphi_v) - \res_{\mu_v}\tau \right)\right) = 0.\]
    \end{enumerate}
\end{thm}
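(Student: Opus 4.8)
The plan is to follow the by-now-standard strategy of Scholze \cite{Scholze15} and \cite{10author}: first produce the determinant $D_{c,Q}$ by $p$-adic interpolation out of the characteristic-zero situation, where the cohomology decomposes into automorphic pieces carrying honest Galois representations; then verify the two relations rationally, using local--global compatibility at the relevant places together with the local computations of \cref{sec:local}; and finally transfer the relations to the integral Hecke algebra at the price of a nilpotent ideal whose degree depends only on $n$ and $[F:\Q]$. Concretely, the existence of a $2n$-dimensional group determinant of $G_{F,S}$ valued in $\til{\TT}^S(H_c^*(\til{X}_{\til{K}_1(Q)},\OO))/I$, with $I$ nilpotent of the stated bounded degree and satisfying $D(X-\Frob_v)=\til{P}_v(X)$ for $v\notin S$, is the main construction of \cite{10author} (building on \cite{Scholze15}) applied to the group $\til{G}$ and the good subgroup $\til{K}_1(Q)$, using that $\til{G}$ becomes $\GL_{2n}$ over $F$ and that $\til{K}_1(Q)$ is unramified outside $\ov{S}$. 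The Hecke operators attached at the places $v\in Q$ to the parahoric-$1$ subgroups $\iota_v^{-1}(\p_{\mu_v,1})$ span a commutative subalgebra commuting with $\til{\TT}^S$ and with the values of the determinant, so it extends by extension of scalars to the required $D_{c,Q}$ over $\til{\TT}^S_Q(H_c^*(\til{X}_{\til{K}_1(Q)},\OO))/I$, and property~(1) is inherited verbatim.

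It remains to prove the relation in part~(2), which concerns the Taylor--Wiles places $v\in Q$, where $\mu_v$ is the three-block partition $2n=d_v+(n-d_v)+n$; write $x_{\sigma,\tau}$ for the element on its left-hand side. Since the abstract Hecke algebra is $\OO$-flat while the Hecke action on integral cohomology has torsion, the argument of \cite{10author} lets us --- after possibly enlarging $I$ by a further amount, still bounded in terms of $n$ and $[F:\Q]$ and unaffected by the extra level at $Q$ --- reduce to checking the corresponding identity for the action on $H_c^*(\til{X}_{\til{K}_1(Q)},\ov{\Q}_p)$. As a module over $G_{F,S}$ and the Hecke algebra, this is a successive extension of pieces of the form $r_\iota(\pi)\otimes\pi_f^{\til{K}_1(Q)}$, where $\pi$ runs over the cohomological automorphic representations of $\til{G}(\A_{F^+})$ contributing to cohomology --- the non-cuspidal ones, induced from proper Levi subgroups, being handled in the same way --- and $r_\iota(\pi)\colon G_F\to\GL_{2n}(\ov{\Q}_p)$ is the associated Galois representation. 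On such a piece $D_{c,Q}$ restricts to the group determinant of $r_\iota(\pi)$, while the operators $V^{i,j}$ and $\langle\,\cdot\,\rangle$ at $v\in Q$ act on $\pi_{\ov{v}}^{\iota_v^{-1}(\p_{\mu_v,1})}$ through the map $t_{\mu_v}$ of \cref{sec:local} (applied with $2n$ in place of $n$, and $\pi_{\ov{v}}$ transported to a $\GL_{2n}(F_v)$-module via $\iota_v$). Unwinding the definition of $D_{c,Q}$, the identity to be checked on this piece becomes
\[
\res_{q_v,\mu_v}^{(2n)!}\,\res_{\mu_v}^{(2n)!}\!\left(\sum_{i=1}^{k-1}E_{\mu_v,i}(r(\varphi_v))+\langle\Art_{F_v}^{-1}(\tau)\rangle\,E_{\mu_v,k}(r(\varphi_v))-\res_{\mu_v}\,r(\tau)\right)=0,
\]
where $(r,N)=\rec^T_{F_v}(\pi_{\ov{v}})$.

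This last identity is exactly the conclusion of \cref{prop:relation_1_rep}, applied with $n$ replaced by $2n$ to the irreducible admissible $\GL_{2n}(F_v)$-module $\pi_{\ov{v}}$ and the partition $\mu_v$: combining its two alternatives, either $\res_{q_v,\mu_v}^{(2n)!}$ already annihilates $\pi_{\ov{v}}^{\iota_v^{-1}(\p_{\mu_v,1})}$ and then so does the whole displayed operator, or the inner factor annihilates it. In the latter case \cref{prop:res_or} forces $\rec^T_{F_v}(\pi_{\ov{v}})$ to be a direct sum of $2n$ characters with $N=0$, the first $n$ unramified and the last $n$ tamely ramified with common restriction to inertia; local--global compatibility at $v$ --- which holds unconditionally here, since $v$ splits over $F^+$ and $v\nmid p$ (because $Q\cap S=\varnothing$ and $S\supseteq S_p$), and which is free of the usual Frobenius-semisimplification and monodromy ambiguities because $N=0$ and the Weil--Deligne representation is a sum of characters --- then genuinely identifies $r_\iota(\pi)|_{G_{F_v}}$ with $r$, so that the rewriting above is legitimate. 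If the normalization that is in force produces the contragredient instead, one appeals to \cref{prop:relation_1_rep_vee} in place of \cref{prop:relation_1_rep}. Summing over the pieces shows $x_{\sigma,\tau}$ acts by zero on $H_c^*(\til{X}_{\til{K}_1(Q)},\ov{\Q}_p)$, whence $x_{\sigma,\tau}=0$ in $\til{\TT}^S_Q(H_c^*(\til{X}_{\til{K}_1(Q)},\OO))/I$, as required.

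The main obstacle is not a single computation but the interplay of two points. First, \cref{prop:relation_1_rep} must be available for the local components $\pi_{\ov{v}}$ of automorphic representations of $\til{G}$, which --- unlike in \cite{Thorne12} --- need not be generic; this is exactly why it was established in \cref{sec:local} for an arbitrary irreducible admissible representation, and it is where the characteristic-zero representation theory of that section is genuinely used. Second, one must check that the interpolation and torsion-transfer arguments of \cite{10author}, which produce and control the nilpotent ideal $I$, survive the adjunction of the new parahoric-level Hecke operators at $Q$, so that the degree of $I$ still depends only on $n$ and $[F:\Q]$ --- this is routine but needs to be verified.
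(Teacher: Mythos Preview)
Your proposal is correct and follows essentially the same route as the paper: the paper's proof is a one-line appeal to \cref{prop:relation_1_rep} together with \cite[Theorem~2.3.3]{10author} and \cite[Corollary~5.1.11]{Scholze15}, pointing to the proof of \cite[Proposition~3.2.1]{10author} for the template, and your exposition unpacks precisely that template. Your identification of the two genuine content points --- that \cref{prop:relation_1_rep} must apply to possibly non-generic local components (whence the generality of \cref{sec:local}), and that the nilpotence bound survives adjoining the parahoric Hecke operators at $Q$ --- is accurate and matches what the cited argument actually uses.
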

\begin{proof}
This follows from \cref{prop:relation_1_rep} by using \cite[Theorem~2.3.3]{10author} and \cite[Corollary~5.1.11]{Scholze15} (see proof of \cite[Proposition~3.2.1]{10author}).
\end{proof}

Now we prove the version of the previous proposition for non-compactly supported cohomology:

\begin{thm}\label{thm:det_rel_nc}
    Suppose that $\til{K} \subset \til{G}(\A_{F^+}^\infty)$ be a good subgroup which is decomposed with respect to $P$. Then there exists a $2n$-dimensional $\til{\TT}^S_Q(H^*(X_{\til{K}_1(Q)}, \OO))/I$-valued group determinant $D_Q$ of $G_{F, S}$ for some ideal $I$ of nilpotence degree depending only on $n$ and $[F:\Q]$ such that the following properties hold:
    \begin{enumerate}
        \item If $v \notin S$ is a place of $F$, then $D_Q(X - \Frob_v)$ is equal to the image of $\til{P}_v(X)$ in $\til{\TT}^S_Q(H^*(X_{\til{K}_1(Q)}, \OO))/I[X]$.
        \item If $v \notin Q$, then for any $\sigma \in G_{F, S}$ and $\tau \in I_{F_v}$ we have the relation 
        \[\Tr_{D_{Q}}\left(\sigma\res_{q_v, \mu_v}^{(2n)!}\res_{\mu_v}^{(2n)!}\left(\sum \limits_{i = 1}^{k-1} E_{\mu_v, i}(\varphi_v) + \langle \textnormal{Art}_{F_v}^{-1}(\tau) \rangle E_{\mu_v, k}(\varphi_v) - \res_{\mu_v}\tau \right)\right) = 0.\]
    \end{enumerate}
\end{thm}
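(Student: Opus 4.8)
The plan is to mirror the proof of \cref{thm:det_rel_c} almost verbatim, replacing compactly supported cohomology with ordinary cohomology throughout, and to harvest the additional input that is available because of the boundary analysis carried out in \cref{sec:setup} and \cref{sec:galois}. Concretely, I would first invoke \cite[Corollary~5.1.11]{Scholze15} together with \cite[Theorem~2.3.3]{10author} to obtain, for the good subgroup $\til{K}_1(Q)$, a $2n$-dimensional group determinant valued in (a nilpotent quotient of) $\til{\TT}^S_Q(H^*(X_{\til{K}_1(Q)}, \OO))$ whose characteristic polynomial at an unramified place $v \notin S$ is the image of $\til{P}_v(X)$; this gives property (1) immediately, with the same caveat about an ideal $I$ of nilpotence degree bounded in terms of $n$ and $[F:\Q]$ coming from the patching of the cohomological degrees in \cite{Scholze15}. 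The only genuinely new point is property (2): one has to see that the local relation at places $v \notin Q$ of \cref{prop:relation_1_rep}, which a priori holds for irreducible admissible representations of $\GL_{2n}(F_{\ov{v}})$, descends to the Hecke algebra acting on the full (non-compactly supported) cohomology of $\til{X}_{\til{K}_1(Q)}$.

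For that descent I would argue exactly as in the proof of \cite[Proposition~3.2.1]{10author}: the Hecke action on $H^*(\til{X}_{\til{K}_1(Q)}, \OO)$ after localizing at a suitable maximal ideal factors through a product of Hecke algebras attached to cuspidal cohomological automorphic representations $\Pi$ of $\til{G}(\A_{F^+})$ (using the vanishing/torsion-freeness statements of \cite{Scholze15} to reduce to the cuspidal part up to a nilpotent ideal), and for each such $\Pi$ the local component $\Pi_{\ov{v}}$ is an irreducible admissible representation of $\til{G}(F^+_{\ov{v}}) \cong \GL_{2n}(F_v) \times \GL_{2n}(F_{v^c})$ (after choosing a place $v \mid \ov{v}$); applying \cref{prop:relation_1_rep} to the $\GL_{2n}(F_v)$-factor with the partition $\mu_v$ and then feeding in the fact that $\rec^T$ matches the Hecke eigenvalues with $\mathrm{WD}(\rho_\Pi|_{G_{F_v}})$ via local-global compatibility (again \cite{Scholze15}, or rather its consequence \cite[Theorem~2.4.2]{10author} / the local-global compatibility packaged in \cite{10author}) yields precisely the stated trace identity $\Tr_{D_Q}(\cdots) = 0$ on the cuspidal part, hence on all of $\til{\TT}^S_Q(H^*(X_{\til{K}_1(Q)}, \OO))/I$ after enlarging $I$ by a further bounded-nilpotence ideal. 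Here the factor $\res_{q_v, \mu_v}^{(2n)!}$ in front accounts for the alternative ``$\res_{q_v, \mu_v}^{n!}\pi^{\p_1} = 0$'' in \cref{prop:res_or}, so that one of the two conclusions of \cref{prop:relation_1_rep} always gives a valid relation, and multiplying the whole identity by these resultants makes the relation unconditional.

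I would then pass between the compactly supported and ordinary settings cleanly by noting that, up to the usual nilpotent ideals, $H^*_c$ and $H^*$ of $\til{X}_{\til{K}_1(Q)}$ carry ``the same'' Galois determinant: the long exact sequence relating $H^*_c$, $H^*$, and boundary cohomology $H^*(\partial \til{X}_{\til{K}_1(Q)}, \OO)$, together with Poincaré duality $H^i_c \cong (H^{d-i})^\vee$ twisted appropriately, forces the determinants $D_{c,Q}$ and $D_Q$ to agree after inverting nothing and killing a bounded-nilpotence ideal; alternatively one simply runs the \cite{Scholze15}/\cite{10author} machine directly on $H^*$ in place of $H^*_c$, which is what I would actually do, since \cite{Scholze15} provides the needed constructions for $H^*$ as well. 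The main obstacle I anticipate is purely bookkeeping rather than conceptual: tracking the nilpotence degree of the ideal $I$ through the combination of (a) the spectral-sequence/perfect-complex arguments of \cite{Scholze15}, (b) the passage from all automorphic representations contributing to cohomology to the cuspidal ones, and (c) the multiplication by the resultant factors $\res_{q_v,\mu_v}^{(2n)!}\res_{\mu_v}^{(2n)!}$ at each $v \notin Q$ — one needs to check that the resulting $I$ still depends only on $n$ and $[F:\Q]$ and not on $Q$ or the level, which it does because the exponents $(2n)!$ are fixed and the cohomological amplitude is bounded independently of $\til{K}_1(Q)$. Beyond that, the proof is a routine adaptation of \cref{thm:det_rel_c}, and I would simply write ``the proof is identical to that of \cref{thm:det_rel_c}, working with $H^*$ in place of $H^*_c$ and using \cref{prop:relation_1_rep} together with \cite[Theorem~2.4.2]{10author}'' if space is at a premium.
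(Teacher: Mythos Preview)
Your proposal identifies the right ingredients, but your preferred route—running \cite{Scholze15} directly on $H^*$—is not what the paper does, and the assertion that ``\cite{Scholze15} provides the needed constructions for $H^*$ as well'' is not justified as stated: Scholze's Corollary~5.1.11 and its packaging in \cite[Theorem~2.3.3]{10author} concern compactly supported cohomology. Passing to $H^*$ genuinely requires an additional argument, and the paper's approach is precisely your Poincar\'e-duality alternative made explicit. One introduces the Hecke algebra $\til{\TT}^S_{Q,\iota}(H^*_c)$, defined as the image of $\til{\TT}^S_Q$ acting on $H^*_c$ \emph{through the anti-involution} $\iota_{\HH}$, constructs a determinant $D_\iota$ valued there by repeating the argument for \cref{thm:det_rel_c}, and then invokes \cite[Proposition~3.7]{NT}, which supplies a commutative square showing that Poincar\'e duality intertwines the standard Hecke action on $R\Gamma$ with the $\iota_{\HH}$-twisted action on $R\Gamma_c$. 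This yields an isomorphism $\til{\TT}^S_{Q,\iota}(H^*_c)/I_1 \cong \til{\TT}^S_Q(H^*)/I_2$ of $\til{\TT}^S_Q$-algebras (with bounded nilpotence), along which $D_\iota$ is transported to $D_Q$.

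So your phrase ``twisted appropriately'' is on target but conceals the actual content: the twist is $\iota_{\HH}$, and naming it is what allows one to verify properties (1) and (2) on the $H^*_c$ side, where Scholze's machinery applies. Without this your argument is incomplete—you cannot, for instance, say which local input (\cref{prop:relation_1_rep} or its $\iota_{\HH}$-twisted companion \cref{prop:relation_1_rep_vee}) is being invoked, nor why the relation survives transport along the duality isomorphism.
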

\begin{proof}
Denote by $\til{\TT}^S_{Q, \iota}(H^*_c(X_{\til{K}_1(Q)}, \OO))$ the image of $\til{\TT}^S_Q$ under the homomorphism 
\[\til{\TT}^S_Q \to \HH_\OO(\til{G}(\A_{F^+}^\infty), \til{K}_1(Q)) \xrightarrow{\iota_{\HH}} \HH_\OO(\til{G}(\A_{F^+}^\infty), \til{K}_1(Q)) \to \End_{\DD(\OO)}(H^*_c(X_{\til{K}_1(Q)}, \OO)).\]
The same argument as in the proof of \cref{thm:det_rel_c} shows that there exists a group determinant $D_\iota$ valued in $\til{\TT}^S_{Q, \iota}(H^*_c(X_{\til{K}_1(Q)}, \OO))/I$ satisfying the following properties:
\begin{enumerate}
    \item If $v \notin S$ is a place of $F$, then $D_Q(X - \Frob_v)$ is equal to the image of $\til{P}_v(X)$ in $\til{\TT}^S_{Q, \iota}(H^*_c(X_{\til{K}_1(Q)}, \OO))/I[X]$.
    \item If $v \notin Q$, then for any $\sigma \in G_{F, S}$ and $\tau \in I_{F_v}$ we have the relation 
    \[\Tr_{D_\iota}\left(\sigma\res_{q_v, \mu_v}^{(2n)!}\res_{\mu_v}^{(2n)!}\left(\sum \limits_{i = 1}^{k-1} E_{\mu_v, i}(\varphi_v) + \langle \textnormal{Art}_{F_v}^{-1}(\tau) \rangle E_{\mu_v, k}(\varphi_v) - \res_{\mu_v}\tau \right)\right) = 0.\]
\end{enumerate}
By \cite[Proposition~3.7]{NT} we have a commutative diagram
\newcommand*{\isoarrow}[1]{\arrow[#1,"\rotatebox{90}{\(\sim\)}"]}
\begin{equation}
\begin{tikzcd}
    \HH_\OO(\til{G}(\A_{F^+}^\infty), \til{K}_1(Q)) \arrow{r}{} \arrow{d}{\iota_{\HH}} & \End_{\DD(\OO)}(R\Gamma(X_{\til{K}_1(Q)}, \OO)) \isoarrow{d} \\ 
    \HH_\OO(\til{G}(\A_{F^+}^\infty), \til{K}_1(Q)) \arrow{r}{}  & \End_{\DD(\OO)}(R\Gamma_c(X_{\til{K}_1(Q)}, \OO))
\end{tikzcd}
\end{equation}
where the right vertical arrow is induced by Poincar\'e duality. Then we get an isomorphism \[\til{\TT}^S_{Q, \iota}(H^*_c(X_{\til{K}_1(Q)}, \OO))/I_1 \isom \til{\TT}^S_{Q}(H^*(X_{\til{K}_1(Q)}, \OO))/I_2\] over $\til{\TT}^S_Q$ for some ideals $I_{1,2}$ of nilpotence degrees depending only on $n$ and $[F:\Q]$. Moreover, we can choose $I_1$ such that it contains $I$. We can conclude by making $D_Q$ the image of $D_\iota$ under this homomorphism.
\end{proof}

\begin{lem}\label{lem:Burnside_2}
    Let $k$ be a field, and let $\ov{\rho}_1, \ov{\rho}_2: G \to GL_n(k)$ be two non-isomorphic absolutely irreducible representations. Then the extended map $k[G] \to M_n(k) \oplus M_n(k)$ defined by $\ov{\rho}_1 \oplus \ov{\rho}_2$ is surjective.
\end{lem}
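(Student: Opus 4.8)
The plan is to deduce this from the classical Jacobson density theorem (or equivalently the Chinese Remainder Theorem for two-sided ideals in semisimple algebras). First I would recall that for a single absolutely irreducible representation $\ov{\rho}_i : G \to \GL_n(k)$, the induced map $k[G] \to M_n(k)$ is surjective; this is the Burnside density theorem over $k$, valid precisely because $\End_{k[G]}(k^n) = k$ when $\ov{\rho}_i$ is absolutely irreducible. So each factor map is individually onto $M_n(k)$.

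Next I would consider the product map $\varphi = \ov{\rho}_1 \oplus \ov{\rho}_2 : k[G] \to M_n(k) \oplus M_n(k)$. Let $A = \im(\varphi)$. Since each projection $A \to M_n(k)$ is surjective by the previous step, and $M_n(k)$ is simple as a ring, the image $A$ is a subalgebra of $M_n(k) \oplus M_n(k)$ surjecting onto each factor. By Goursat's lemma (or directly: the kernel of $A \to M_n(k)$ onto the first factor is a two-sided ideal of the second copy of $M_n(k)$, hence $0$ or all of it), either $A = M_n(k) \oplus M_n(k)$, which is what we want, or $A$ is the graph of a ring isomorphism $\theta : M_n(k) \to M_n(k)$. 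In the latter case, $\theta$ is an automorphism of $M_n(k)$, hence inner by the Skolem--Noether theorem, say $\theta(x) = g x g^{-1}$ for some $g \in \GL_n(k)$. But then for every $s \in G$ we would have $\ov{\rho}_2(s) = g\,\ov{\rho}_1(s)\,g^{-1}$, contradicting the hypothesis that $\ov{\rho}_1$ and $\ov{\rho}_2$ are non-isomorphic. Therefore $A = M_n(k) \oplus M_n(k)$, i.e.\ $\varphi$ is surjective.

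The only genuine subtlety -- and the step I would treat most carefully -- is making sure that the argument is valid over the not-necessarily-algebraically-closed field $k$; this is exactly why the hypothesis is \emph{absolute} irreducibility rather than mere irreducibility, so that $k$ is a splitting field for each $\ov{\rho}_i$ and Burnside's theorem applies over $k$ itself. Everything else (Goursat, Skolem--Noether for $M_n(k)$) is standard algebra and needs no characteristic or finiteness assumptions. One could alternatively phrase the whole thing via Jacobson density applied to the semisimple module $k^n \oplus k^n$ over $k[G]$, noting that the two simple summands are non-isomorphic so the density theorem gives surjectivity onto $\End_{k[G]}(k^n) \times \End_{k[G]}(k^n)$-linear endomorphisms, which is $M_n(k) \oplus M_n(k)$; I would likely present this shorter route and relegate the Goursat/Skolem--Noether version to a remark.
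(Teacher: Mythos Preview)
Your proposal is correct and follows essentially the same route as the paper: surjectivity onto each factor via Burnside, then a Goursat-type analysis of the image using simplicity of $M_n(k)$, followed by Skolem--Noether to identify the graph case with an inner automorphism and derive a contradiction. The only cosmetic difference is that the paper first passes to the algebraic closure (recovering the result over $k$ by faithful flatness), whereas you correctly observe that absolute irreducibility already makes Burnside apply over $k$ itself; your treatment is thus slightly cleaner on this point.
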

\begin{proof}
    We may pass to the algebraic closure of $k$ (which we still denote $k$). Let $\ell_i : k[G] \to M_n(k)$ be the linear extension of $\ov{\rho}_i$ for $i = 1,2$. The two maps $\ell_i$ are surjective by Burnside's theorem. Let $A$ be the image of $\ell_1 \oplus \ell_2$, and let $I_i = \ker(A \to M_n(k))$ where $i=1,2$ corresponds to projecting on the first and second factor. Since $\ell_i$ are surjective, $I_i$ are in fact two-sided ideals of $M_n(k)$. Then $I_i = M_n(k)$ or $I_i = 0$. If $I_i = M_n(k)$ for some $i$, then $\ell_1 \oplus \ell_2$ is surjective. Suppose then that $I_1 = I_2 = 0$. Then we have an automorphism $f$ of $M_n(k)$ defined by $(v, f(v)) \in A$ for all $v \in M_n(k)$. Since all the automorphisms of $M_n(k)$ are inner, we conclude that there exists $u \in \GL_n(k)$ such that $A = \{(v, uvu^{-1}) \mid v \in M_n(k)\}$. But this is impossible since $\ov{\rho}_1$ and $\ov{\rho}_2$ are non-isomorphic.
\end{proof}

\begin{thm}\label{thm:gtil_main}
    Suppose that $\til{K} \subset \til{G}(\A_{F^+}^\infty)$ be a good subgroup which is decomposed with respect to $P$ and that for each $v \in Q$ we have $\res_{\mu_v} \notin \til{\m}_{Q}$. Then there exists a continuous representation 
    \[\rho_{\m_Q} : G_{F, S \cup Q} \to \GL_n(\TT_Q^S(H^*(X_{K_1(Q)}, \OO)_{\m_Q})/I)\]
    satisfying the conditions below for some ideal $I \subset \TT_Q^S(H^*(X_{K_1(Q)}, \OO)_{\m_Q})$ of nilpotence degree depending only on $n$ and $[F : \Q]$.
    \begin{enumerate}
        \item If $v \notin S$ is a place of $F$, the characteristic polynomial of $\rho_{\m_Q}(\Frob_v)$ is equal to the image of $P_v(X)$ in $\TT_Q^S(H^*(X_{K_1(Q)}, \OO)_{\m})/I[X]$.
        \item If $v \in Q$, then $\rho_{\m_Q}|_{G_{F_{v^c}}}$ is unramified. 
        \item If $v \in Q$, then $\rho_{\m_Q}|_{G_{F_v}} = s \oplus \psi$ where $s$ is unramified and $\tau \in I_{F_v}$ acts on $\psi$ as a scalar $\langle \textnormal{Art}_{F_v}^{-1}(\tau) \rangle$. 
    \end{enumerate}
\end{thm}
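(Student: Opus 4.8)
The plan is to obtain $\rho_{\m_Q}$ by descending a $2n$-dimensional group determinant along the Satake map and splitting off an $n$-dimensional factor. First I would produce the ambient $2n$-dimensional determinant on boundary cohomology: exactly as in the proofs of \cref{thm:det_rel_c,thm:det_rel_nc} (which rest on \cite[Corollary~5.1.11]{Scholze15} and \cite[Theorem~2.3.3]{10author}), combined with the long exact sequence relating $H_c^*(\til{X}_{\til{K}_1(Q)},\OO)$, $H^*(\til{X}_{\til{K}_1(Q)},\OO)$ and $H^*(\partial\til{X}_{\til{K}_1(Q)},\OO)$ and the localisation isomorphism $H^*(\partial\til{X}_{\til{K}_1(Q)},\OO)_{\til{\m}_Q}\cong H^*(\til{X}^P_{\til{K}_1(Q)},\OO)_{\til{\m}_Q}$ of \cite[Theorem~2.4.2]{10author}, one gets a $2n$-dimensional group determinant $\til{D}$ of $G_{F,S\cup Q}$ valued in $\til{\TT}^S_Q(H^*(\partial\til{X}_{\til{K}_1(Q)},\OO)_{\til{\m}_Q})/I$, for a nilpotent ideal $I$ of degree bounded in terms of $n$ and $[F:\Q]$, which satisfies $\til{D}(X-\Frob_v)=\til{P}_v(X)$ for $v\notin S$ and, at the places $v\in Q$, the parahoric-$1$ inertia relation of \cref{prop:relation_1_rep,prop:relation_1_rep_vee} (incorporated into the determinant exactly as in \cref{thm:det_rel_nc}). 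Then I would push $\til{D}$ forward along the descended Satake map of \cref{prop:descent} --- whose effect on Satake parameters is the one recorded just below \cref{prop:og_coh} and in \cite[Proposition-Definition~5.3]{NT} --- to get a $2n$-dimensional group determinant $D$ of $G_{F,S\cup Q}$ valued in $R:=\TT^S_Q(H^*(X_{K_1(Q)},\OO)_{\m_Q})/I$, with $D(X-\Frob_v)$ equal for $v\notin S$ to the image of $P_v(X)\,q_v^{n(2n-1)}P_{v^c}^\vee(q_v^{1-2n}X)$ and with residual reduction the product of the group determinants of $\ov{\rho}_\m$ and $\ov{\rho}_\m^\perp$.

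Since $\m$ is non-Eisenstein of Galois type, $\ov{\rho}_\m$ and $\ov{\rho}_\m^\perp$ are absolutely irreducible and non-isomorphic; by \cref{lem:Burnside_2} the idempotent of $M_n(k)\oplus M_n(k)$ projecting onto the $\ov{\rho}_\m$-factor exists, and --- $R$ being a complete (hence henselian) local ring --- it lifts, so after possibly enlarging $I$ (its nilpotence degree staying bounded in terms of $n$ and $[F:\Q]$) the determinant $D$ factors uniquely as $D=D'\cdot D''$ with $D'$ an $n$-dimensional group determinant reducing to that of $\ov{\rho}_\m$ and $D''=(D')^\perp$. Comparing with the factorisation of $D(X-\Frob_v)$ into the two monic degree-$n$ polynomials above, and using that $D'$ reduces to the group determinant of $\ov{\rho}_\m$, gives $D'(X-\Frob_v)=P_v(X)$ for all $v\notin S$; and since $\ov{\rho}_\m$ is absolutely irreducible, $D'$ is the group determinant of a continuous representation $\rho_{\m_Q}\colon G_{F,S\cup Q}\to\GL_n(R)$, unique up to conjugacy. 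This is property~(1). (Alternatively one may construct $\rho_{\m_Q}$ with property~(1) directly on the $\GL_n$-side via \cite[Theorem~2.3.3]{10author}, and use the $\til{G}$-side only for what follows.)

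For (2) and (3), fix $v\in Q$. As $q_v\equiv1$ in $k$ we have $\res_{q_v,\mu_v}\equiv\res_{\mu_v}\pmod{\m_Q}$, so the hypothesis $\res_{\mu_v}\notin\til{\m}_Q$ makes all the resultants appearing in the relation of \cref{thm:det_rel_nc}(2) units in $R$; in particular \cref{prop:res_or,prop:res_or_vee} rule out a twisted-Steinberg constituent at $v$. Transporting the surviving relation through the Satake descent and the factorisation $D=D'\cdot D''$, then dividing by the unit resultants, exhibits a complementary pair of idempotents $\Pi_1,\Pi_2\in M_n(R)$, namely the generalised-eigenspace projections $E_{\nu_v,i}(\rho_{\m_Q}(\varphi_v))\,\res_{\nu_v}^{-1}$, commuting with $\rho_{\m_Q}(\varphi_v)$, and yields for every $\tau\in I_{F_v}$ the identity $\rho_{\m_Q}(\tau)=\Pi_1+\langle\Art_{F_v}^{-1}(\tau)\rangle\,\Pi_2$ together with the fact that $\rho_{\m_Q}|_{G_{F_{v^c}}}$ is unramified. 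Since $G_{F_v}$ is topologically generated by $I_{F_v}$ and any lift $\varphi_v$ of Frobenius, these identities show that $\Pi_1,\Pi_2$ commute with $\rho_{\m_Q}(G_{F_v})$, so that $\rho_{\m_Q}|_{G_{F_v}}=s\oplus\psi$ with $s=\rho_{\m_Q}|_{\Pi_1R^n}$ unramified and $\psi=\rho_{\m_Q}|_{\Pi_2R^n}$ having $\tau\in I_{F_v}$ act by the scalar $\langle\Art_{F_v}^{-1}(\tau)\rangle$ (this is the block decomposition of \cref{lem:lift}). This gives (2) and (3).

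The main obstacle I expect is the first step: propagating the group determinant from the cuspidal cohomology of $\til{X}_{\til{K}_1(Q)}$ to the Siegel boundary stratum, pushing it down along the Satake descent of \cref{prop:descent}, and splitting off the correct $n$-dimensional factor, all while controlling the nilpotent ideal $I$ and, crucially, preserving the parahoric-level inertia relations at the places $v\in Q$ --- in effect the analogue, in the parahoric Taylor--Wiles setting, of the local-global compatibility bookkeeping of \cite[\S3]{10author}. The genuinely new point is that the $\GL_{2n}(F_v)$-components occurring for $v\in Q$ need no longer be generic (they arise from cuspidal representations of the unitary group $\til{G}$), so one must use the characteristic-zero local analysis of \cref{sec:local} to turn the Hecke relations on the parahoric-$1$-fixed vectors into the precise shape $s\oplus\psi$ on the decomposition group at $v$ --- including checking that it is the scalar-inertia summand $\psi$, and not the unramified summand $s$, that corresponds to the parahoric-$1$ block.
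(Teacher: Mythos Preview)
Your overall architecture matches the paper's: build a $2n$-dimensional determinant on $\til{G}$-cohomology carrying the parahoric inertia relation of \cref{thm:det_rel_c,thm:det_rel_nc}, descend it via \cref{prop:descent}, split off an $n$-dimensional factor, and then read off the block decomposition at places $v\in Q$ using the idempotents $E_{\nu_v,i}/\res_{\nu_v}$ and \cref{lem:lift}. Two points deserve correction. First, the non-isomorphism $\ov\rho_\m\not\cong\ov\rho_\m^\perp$ does not follow from $\m$ being non-Eisenstein (that only gives absolute irreducibility of each); it is the hypothesis $\res_{\mu_v}\notin\til\m_Q$ that forces the characteristic polynomials of $\ov\rho_\m(\Frob_v)$ and $\ov\rho_\m^\perp(\Frob_v)$ to be coprime, whence $\ov\rho_\m\not\cong\ov\rho_\m^\perp$.

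Second, and more substantively, you have misplaced the use of \cref{lem:Burnside_2}. You invoke it to factor $D$ as $D'\cdot D''$, but factoring a group determinant whose residual is multiplicity-free is a general fact about determinants over henselian local rings and does not use \cref{lem:Burnside_2}; the paper in fact bypasses this by constructing the $n$-dimensional determinant directly via \cite[Theorem~2.3.7]{10author} (your ``alternative'') and then \emph{identifying} $D^0_Q$ with $D^1_Q\cdot(D^1_Q)^\perp$. The place where \cref{lem:Burnside_2} is genuinely needed is the step you glossed over as ``transporting the surviving relation'': what \cref{thm:det_rel_nc} produces is only a \emph{trace} relation $\Tr\bigl(\rho'_{\m_Q}(\sigma)\cdot X\bigr)=0$ for all $\sigma\in G_{F,S\cup Q}$, with $X$ the bracketed expression in $\rho'_{\m_Q}(\varphi_v),\rho'_{\m_Q}(\tau)$. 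To obtain the matrix identity $\rho_{\m_Q}(\tau)=\Pi_1+\langle\Art_{F_v}^{-1}(\tau)\rangle\Pi_2$ you must know that as $\sigma$ varies the elements $\rho'_{\m_Q}(\sigma)$ span all of $M_n(R)\oplus M_n(R)$; this is exactly \cref{lem:Burnside_2} over $k$ combined with Nakayama, and it is how the paper passes from the trace relation to $X=0$. Without this step the argument is incomplete.
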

\begin{proof}
    Using \cref{thm:det_rel_c} and \cref{thm:det_rel_nc}, we can construct a\\ $\til{\TT}^S_Q(H_c^*(X_{\til{K}_1(Q)}, \OO)_{\til{\m}_Q} \oplus H^*(X_{\til{K}_1(Q)}, \OO)_{\til{\m}_Q})/I$-valued group determinant $D_Q$ of $G_{F,S \cup Q}$.
    Consider the long exact sequence 
    \[\ldots \to H_c^i(\til{X}_{\til{K}_1(Q)}, \OO) \to H^i(\til{X}_{\til{K}_1(Q)}, \OO) \to H^i(\partial \til{X}_{\til{K}_1(Q)}, \OO) \to H_c^{i+1}(\til{X}_{\til{K}_1(Q)}, \OO)\to \]
    Using this sequence and \cref{prop:descent} we know that $S_Q^f$ descends to a homomorphism 
    \[\til{\TT}^S_Q(H_c^*(\til{X}_{\til{K}_1(Q)}, \OO)_{\til{\m}_Q} \oplus H^*(\til{X}_{\til{K}_1(Q)}, \OO)_{\til{\m}_Q}) \to \TT^S_Q(H^*(X_{K_1(Q), \OO})_{\m_Q})/I_0\]
    for some ideal $I_0$ with square $0$. We can use this to construct a $2n$-dimensional group determinant $D^0_Q$ valued in $\TT^S_Q(H^*(X_{K_1(Q), \OO})_{\m_Q})/I$ such that:
    \begin{enumerate}
        \item For $v \notin S$ we have $D^0_Q(X - \Frob_v) = P_v(X)q_v^{n(2n-1)}{P_{v^c}}^\vee (q_v^{1-2n}X)$
        \item For $v \in Q$ we have 
        \[\Tr_{D^0_Q}\left(S_Q^f\left(\sigma\res_{q_v, \mu_v}^{(2n)!}\res_{\mu_v}^{(2n)!}\left(\sum \limits_{i = 1}^{k-1} E_{\mu_v, i}(\varphi_v) + \langle \textnormal{Art}_{F_v}^{-1}(\tau) \rangle E_{\mu_v, k}(\varphi_v) - \res_{\mu_v}\tau \right)\right)\right) = 0.\]
    \end{enumerate}
    and $I$ has nilpotence degree depending only on $n$ and $[F:\Q]$. By \cite[Theorem~2.3.7]{10author} there also exists an $n$-dimensional group determinant $D^1_Q$ of $G_{F, S \cup Q}$ valued in $\TT^S_Q(H^*(X_{K_1(Q), \OO})_{\m_Q})/I$ such that $D^1_Q(X - \Frob_v) = P_v(X)$ for $v \notin S$. Then the group determinants $D^1_Q \oplus {D^1_Q}^\perp$ and $D^0_Q$ are equal. Moreover, since $\ov{\rho}_\m$ is absolutely irreducible, there exists a continuous representation 
    \[\rho_{\m_Q} : G_{F, S \cup Q} \to \GL_n(\TT^S_Q(H^*(X_{K_1(Q)}, \OO)_{\m_Q})/I)\]
    such that the characteristic polynomial of $\rho_{\m_Q}$ is associated to $D^1_Q$. Let $\rho_{\m_Q}' \coloneqq \rho_{\m_Q}\oplus \rho_{\m_Q}^\perp$. Writing out the relation at places $v \in Q$, we get 
    \begin{align*}& \Tr (\rho_{\m_Q}'(\sigma) S_Q^f(\res_{q_v, \mu_v}^{(2n)!}\res_{\mu_v}^{(2n)!}(\sum \limits_{i = 1}^{k-1} E_{\mu_v, i}(\rho_{\m_Q}'(\varphi_v)) \\ & + \langle \textnormal{Art}_{F_v}^{-1}(\tau) \rangle E_{\mu_v, k}(\rho_{\m_Q}'(\varphi_v)) - \res_{\mu_v}\rho_{\m_Q}'(\tau) ))) =  0.\end{align*}
    Since $\res_{\mu_v} \notin \til{\m}_{Q}$, we know that $\ov{\rho}_{\m}$ and $\ov{\rho}_{\m}^\perp$ are not isomorphic. Applying Nakayama's lemma and \cref{lem:Burnside_2} we see that the extended map 
    \[\TT^S_Q[G_{F, S \cup Q}] \to M_n(\TT^S_Q(H^*(X_{K_1(Q), \OO})_{\m_Q})/I) \oplus M_n(\TT^S_Q(H^*(X_{K_1(Q), \OO})_{\m_Q})/I)\] given by $\rho_{\m_Q}\oplus \rho_{\m_Q}^\perp$ is surjective. Considering the trace relation above with $\sigma$ replaced by an arbitrary element of $\TT^S_Q[G_{F, S \cup Q}]$ we conclude that 
    \begin{align*}& S_Q^f(\res_{q_v, \mu_v}^{(2n)!}\res_{\mu_v}^{(2n)!}(\sum \limits_{i = 1}^{k-1} E_{\mu_v, i}(\rho_{\m_Q}'(\varphi_v)) \\ & + \langle \textnormal{Art}_{F_v}^{-1}(\tau) \rangle E_{\mu_v, k}(\rho_{\m_Q}'(\varphi_v)) - \res_{\mu_v}\rho_{\m_Q}'(\tau) )) =  0.\end{align*}
    
    Since $q_v \equiv 1 \mod p$, we know that $\res_{q_v, \mu_v} \notin \til{\m}_Q$. Thus 
    \begin{align*}\label{eq:fin_rel} S_Q^f\left(\sum \limits_{i =  1}^{k-1} E_{\mu_v, i}(\rho_{\m_Q}'(\varphi_v)) + \langle \textnormal{Art}_{F_v}^{-1}(\tau) \rangle E_{\mu_v, k}(\rho_{\m_Q}'(\varphi_v)) - \res_{\mu_v}\rho_{\m_Q}'(\tau) \right) =  0.\end{align*}
    This implies that
    \[\rho_{\m_Q}(\tau) = S_Q^f\left(\sum \limits_{i = 1}^{k-1} \res_{\mu_v}^{-1}E_{\mu_v, i}(\rho_{\m_Q}(\varphi_v))\right)
     + S_Q^f(\langle \textnormal{Art}_{F_v}^{-1}(\tau)\rangle \res_{\mu_v}^{-1}E_{\mu_v, k}(\rho_{\m_Q}(\varphi_v)))\]
     \[\rho_{\m_Q}(\tau) = \res_{\nu_v}^{-1}E_{\nu, 1}(\rho_{\m_Q}(\varphi_v))
     + \langle \textnormal{Art}_{F_v}^{-1}(\tau)\rangle \res_{\mu_v}^{-1}E_{\nu, 2}(\rho_{\m_Q}(\varphi_v))\]
     Let $\TT \coloneqq \TT^S_Q(H^*(X_{K_1(Q), \OO})_{\m_Q})/I$. 
    Consider the decomposition $\ov{\rho}_\m = \ov{r}_1 \oplus \ov{r}_2$, corresponding to the Frobenius generalized eigenspaces of all eigenvalues not equal to $\alpha_v$ and $\alpha_v$ respectively. Then \[\TT^n = \res_{\nu_v}^{-1}E_{\nu, 1}(\rho_{\m_Q}(\varphi_v))\TT^n
    \oplus \res_{\mu_v}^{-1}E_{\nu, 2}(\rho_{\m_Q}(\varphi_v))\TT^n\]
    is  the unique $\rho_{\m_Q}(\varphi_v)$-invariant lift of $\ov{r}_1 \oplus \ov{r}_2$, and we are done by \cref{lem:lift}.

\end{proof} \subsection{Hecke algebras for $G$}
Let $\lambda \in (\Z_+^n)^{\Hom(F, E)}$. Further let $S$ be a finite set of finite places of $F$ containing the $p$-adic places and stable under complex conjugation, satisfying the following conditions:
\begin{enumerate}
    \item Let $l$ be a rational prime such that there exists a place above $l$ in $S$ or $l$ is ramified in $F$. Then there exists an imaginary quadratic subfield $F_0 \subset F$ such that $l$ splits in $F_0$. 
    \item If $v \in S$ then $v$ is split in $F^+$.
\end{enumerate}
Let $K \subset \GL_n(\A_F^\infty)$ be a good subgroup such that for all $v \notin S$ we have $K_v = \GL_n(\OO_{F_v})$. Let $\m \subset \TT^S(K, \lambda)$ be a non-Eisenstein maximal ideal with residue field $k$. By \cite[Theorem~2.3.5]{10author} there exists an associated residual representation $\ov{\rho}_\m : G_{F, S} \to \GL_n(\TT^S(K, \lambda)/{\m})$. By \cite[Theorem~2.3.7]{10author} there exists an ideal $I \subset \TT^S(K, \lambda)$ of nilpotence degree depending only on $n$ and $[F : \Q]$ and a continuous lift $\rho_\m : G_{F, S} \to \GL_n(\TT^S(K, \lambda)_{\m}/I)$ such that for each $v \in S$, $\det(X - \rho_\m(\Frob_v))$ is the image of $P_v(X)$ in $\TT^S(K, \lambda)_{\m}/I[X]$.
We consider the following Taylor-Wiles datum: a tuple $(Q, (\alpha_v)_{v \in Q})$ consisting of 
\begin{itemize}
\item A finite set $Q$ of places of $F$, disjoint from $Q^c$, such that $q_v \equiv 1 \pmod p$ for each $v \in Q$. 
\item Each $v \in Q$ is split in $F^+$, and there exists an imaginary quadratic subfield $F_0 \subset F$ such that $v$ is split in $F_0$. Moreover, $\ov{\rho}_\m$ is unramified at $v$ and $v^c$.
\item $\alpha_v$ is a root of $\det(X - \ov{\rho}_\m(\Frob_v))$.
\end{itemize}
Consider the partition $\nu_v : n = d_v + (n-d_v)$, where $d_v$ is the multiplicity of $\alpha_v$ as a root of $\det(X - \ov{\rho}_\m(\Frob_v))$. 

We define auxillary level subgroups $K_1(Q) \subset K_0(Q) \subset K$. They are good subgroups of $\GL_n)(\A_F^\infty)$
define by the following conditions:
\begin{itemize}
    \item if $v \notin Q$, then $K_1(Q)_v = K_0(Q)_v = K_v$.
    \item if $v \in Q$, then $K_0(Q)_v = \p_{\nu_v}$ and $K_1(Q)_v = \p_{\nu_v,1}$.
\end{itemize}
We have a natural isomorphism $K_0(Q)/K_1(Q) \cong \Delta_Q = \prod_{v \in Q} \Delta_v$. 
Let $S' = S \cup Q \cup Q^c$. We define $\TT^{S'}_Q = \TT^{S \cup Q} \otimes_{\Z} \Z[\Xi_{v, 1}]^{S_{\nu_v}}$.
Let $\TT^{S'}_Q(K_0(Q), \lambda)$ and $\TT^{S'}_Q(K_0(Q)/K_1(Q), \lambda)$ be the images of $\TT^{S'}_Q$ in $\End_{\DD(\OO)}(R\Gamma(X_{K_0(Q)}, V_\lambda))$ and\\ $\End_{\DD(\OO[\Delta_Q])}(R\Gamma(X_{K_1(Q)}, V_\lambda))$ respectively. Let $\m_Q$ be the maximal ideal of $\TT^{S'}_Q$ generated by $\m$ and the kernels of the homomorphisms 
$\Z[\Xi_{v, 1}]^{S_{\nu_v}} \to k$ given by the coefficients of polynomials $(X - \alpha_v)^{d_v}, \det(X - \ov{\rho}_\m(\Frob_v))/(X - \alpha_v)^{d_v}$ . 
\begin{thm}\label{thm:cusp_growth}
    The natural morphisms 
    \[R\Gamma(X_{K_0(Q)}, V_\lambda)_{\m_Q} \to R\Gamma(X_K, V_\lambda)_{\m}\]
    \[R\Gamma(\Delta_Q, R\Gamma(X_{K_1(Q)}, V_\lambda))_{\m_Q} \to R\Gamma(X_{K_0(Q)}, V_\lambda)_{\m_Q}\]
    in $\DD(\OO)$ are isomorphisms.
\end{thm}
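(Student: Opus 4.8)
The plan is to extract both isomorphisms from the representation theory of $\GL_n(F_v)$ in characteristic $p$ developed in \cref{sec:reps}; the geometric part of the argument is essentially formal. For the second morphism, note that $\Delta_Q = K_0(Q)/K_1(Q)$ acts freely on $X_{K_1(Q)}$ (since $K_1(Q)$ is neat) with quotient $X_{K_0(Q)}$, and that the local system $\V_\lambda$ on $X_{K_0(Q)}$ pulls back to the corresponding one on $X_{K_1(Q)}$ because these levels agree at all places above $p$. The Cartan--Leray spectral sequence for this free covering then gives a canonical isomorphism $R\Gamma(X_{K_0(Q)}, \V_\lambda) \cong R\Gamma(\Delta_Q, R\Gamma(X_{K_1(Q)}, \V_\lambda))$ in $\DD(\OO)$, equivariant for the $\TT^{S'}_Q$-action on both sides (the Hecke operators at places of $Q$ commute with the action of $\Delta_Q$, which normalizes $K_1(Q)$ inside $K_0(Q)$); localizing at $\m_Q$ is exact, so the second morphism is an isomorphism.

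For the first morphism, the key point is that $R\Gamma(X_K, \V_\lambda)_\m$ and $R\Gamma(X_{K_0(Q)}, \V_\lambda)_{\m_Q}$ are perfect complexes of $\OO$-modules: the cohomology of these locally symmetric spaces is finitely generated over $\OO$, and localizing at a maximal ideal of the finite $\OO$-algebras $\TT^S(K, \lambda)$, resp.\ $\TT^{S'}_Q(K_0(Q), \lambda)$, passes to a direct summand. A map of perfect complexes over $\OO$ is a quasi-isomorphism if and only if it becomes one after $- \otimes^L_\OO \OO/\varpi$; since $\V_\lambda$ is $\OO$-free, we are reduced to showing that the induced map $R\Gamma(X_{K_0(Q)}, \V_\lambda/\varpi)_{\m_Q} \to R\Gamma(X_K, \V_\lambda/\varpi)_\m$ is a quasi-isomorphism, and we may extend scalars to $\ov{\F}_p$.

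Now pass to the limit over the level at $Q$: set $M^\bullet = \varinjlim_{U_Q} R\Gamma(X_{K^Q U_Q}, \V_\lambda/\varpi)$, a bounded complex of smooth $\ov{\F}_p[\prod_{v \in Q}\GL_n(F_v)]$-modules with locally admissible cohomology, so that $R\Gamma(X_K, \V_\lambda/\varpi) \cong R\Gamma(\prod_{v \in Q}\GL_n(\OO_{F_v}), M^\bullet)$ and $R\Gamma(X_{K_0(Q)}, \V_\lambda/\varpi) \cong R\Gamma(\prod_{v \in Q}\p_{\nu_v}, M^\bullet)$ compatibly with Hecke operators (cf.\ \cite{10author}). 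For each $v \in Q$ we have $q_v \equiv 1 \pmod p$, $p > n$, and $\ov{\rho}_\m$ unramified at $v$, so by the block decomposition established in the proof of \cref{prop:enoughinj} localization at $\m$ replaces $M^\bullet$ by its projection onto $\prod_{v\in Q}\CC_v$, where $\CC_v$ is the block attached to $\ov{\rho}_\m|_{G_{F_v}}$; localization at $\m_Q$ imposes in addition that the operators $V^{i,j}$ at $v$ act through the maximal ideal attached to the factorization $\det(X - \ov{\rho}_\m(\Frob_v)) = (X - \alpha_v)^{d_v}\cdot\bigl(\det(X - \ov{\rho}_\m(\Frob_v))/(X - \alpha_v)^{d_v}\bigr)$, which by the definition of the idempotent $e_{\alpha_v}$ in \cref{sec:reps} is exactly the condition that $\prod_{v \in Q} e_{\alpha_v}$ act as the identity (for an object of $\prod_v \CC_v$, pinning down the $\alpha_v$-block automatically pins down its complement).

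Finally, \cref{thm:c_iso}, applied at each $v \in Q$ with the two-block parahoric $\p_{\nu_v}$ and the eigenvalue $\alpha_v$ (after matching the block conventions of \cref{sec:reps} and \cref{sec:setup}), says that the natural transformation there is an isomorphism of functors $V^{\GL_n(\OO_{F_v})} \cong e_{\alpha_v}V^{\p_{\nu_v}}$ on $\CC_v$. Feeding the cohomology objects $H^q(M^\bullet)$, which lie in $\prod_v \CC_v$ after localization, into the two hypercohomology spectral sequences computing $H^*(X_K, \V_\lambda/\varpi)_\m$ and $H^*(X_{K_0(Q)}, \V_\lambda/\varpi)_{\m_Q}$, and using that \cref{thm:c_iso} is an isomorphism of functors (so that the two spectral sequences agree from the $E_2$-page on), we obtain a quasi-isomorphism between these complexes; unwinding definitions, the natural change-of-level map of the theorem reduces mod $\varpi$ to a unit multiple of (the inverse of) this quasi-isomorphism, because both it and the transformation of \cref{thm:c_iso} are described by the same double cosets at $v$. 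I expect the main obstacle to be precisely this last step: transporting \cref{thm:c_iso} from single modules to the complexes $R\Gamma(X_K, \V_\lambda/\varpi)$, $R\Gamma(X_{K_0(Q)}, \V_\lambda/\varpi)$ and identifying the geometric map with the Hecke-theoretic transformation, which amounts to reconciling the normalizations of \cref{sec:reps}, \cref{sec:local} and \cref{sec:setup}; once \cref{thm:c_iso} is in hand as an isomorphism of \emph{functors}, the spectral sequence comparison itself is formal.
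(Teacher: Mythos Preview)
Your overall strategy matches the paper's: the second isomorphism is formal (Cartan--Leray for the free $\Delta_Q$-cover), and for the first you reduce modulo $\varpi$, pass to infinite level at $Q$ to obtain a smooth $\prod_{v \in Q}\GL_n(F_v)$-module, and compare the two Hochschild--Serre spectral sequences using \cref{thm:c_iso}. The gap is in the step where you assert that ``localization at $\m$ replaces $M^\bullet$ by its projection onto $\prod_v \CC_v$''. This is the non-formal heart of the argument, and the paper treats it rather differently.

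In the paper one first localizes $M_j = \varinjlim_m H^j(X_{K(v^m)}, \V_\lambda/\varpi)$ at (the pullback of) $\m$ to an algebra of Hecke operators \emph{away from} $v$; at this stage no local Hecke operator at $v$ has been used, so there is no a priori link between $\m$ and the local block at $v$. The paper then shows $M_j \in \CC_v$ in two steps: first the non-unipotent blocks are discarded by checking directly that they contribute nothing to $H^i(\GL_n(\OO_{F_v}),-)$ or $H^i(\p_{\nu_v},-)$ (both inject into $H^i(T^p(k),(-)^{\Iw^p})$, which vanishes off the unipotent block); second, inside the unipotent block, one invokes local--global compatibility at Iwahori level \cite[Theorem~3.1.1]{10author} to identify the supercuspidal support of each irreducible $\pi \subset M_j$ with the eigenvalues of $\ov\rho_\m(\Frob_v)$. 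This Iwahori-level input is what connects the global ideal $\m$ to the local category $\CC_v$, and your proposal omits it.

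Your alternative, using the block decomposition of \cref{prop:enoughinj} together with the spherical Hecke at $v$ sitting inside $\TT^S$, could in principle be made to work, but two points are missing. First, \cref{prop:enoughinj} only decomposes the \emph{unipotent} block, so you must still dispose of the non-unipotent contributions separately (as the paper does). Second, and more seriously, you need to justify that the derived action of the local spherical (resp.\ parahoric) Hecke on $H^i(\GL_n(\OO_{F_v}),-)$ (resp.\ $H^i(\p_{\nu_v},-)$) is compatible with that block decomposition, so that localizing the cohomology at the $v$-component of $\m$ (resp.\ $\m_Q$) really agrees with projecting the module onto $\CC_v$ (resp.\ applying $e_{\alpha_v}$). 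Neither point is addressed in the proposal; the paper's route via \cite[Theorem~3.1.1]{10author} sidesteps both by working directly with irreducible constituents of $M_j$.
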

\begin{proof}
The second isomorphism is straightforward. For the first, we can check on the level of cohomology.
    It is enough to check that it is an isomorphism in $\DD(k)$ after applying the functor $-\otimes^{\mathbf{L}}k$. 
    Thus we need to show that the map
    \[H^*(X_{K_0(Q)}, V_\lambda/\varpi)_{\m_Q} \to H^*(X_{K}, V_\lambda/\varpi)_{\m}\]
    is an isomorphism. We can prove this one prime at a time, so we can assume $Q = \{v\}$. 
    For each $j$, let \[M_j \coloneqq \lim_{m \to \infty} H^j(X_{K(v^m)}, V_\lambda/\varpi)_{\m}\]
    where $K(v^m)_w = K_w$ for places $w \neq v$ and $K(v^m)_v$ ranges over all congruence subgroups of level $v^m$.
    We have two Hochschild-Serre spectral sequences:
    \[H^i(\GL_n(\OO_{F_v}), M_j) \Rightarrow H^{i+j}(X_K, V_\lambda/\varpi)_{\m}\]
    \[e_{\alpha_v}H^i(\p_{\nu_v}, M_j) \Rightarrow e_{\alpha_v}H^{i+j}(X_{K_0(Q)}, V_\lambda/\varpi) = H^{i+j}(X_{K_0(Q)}, V_\lambda/\varpi)_{\m_Q}\]
    There is a natural map $\iota^*$ between the these spectral sequences, which arises from deriving the map 
    \[M_j^{\GL_n(\OO_{F_v})} \to M_j^{\p_{\nu_v}} \to e_{\alpha_v}M_j^{\p_{\nu_v}}\]
    Thus it is enough to show that $\iota^*$ map is an isomorphism. $M_j$ is admissible, and we can use 
    \cite[III.6]{VigInd} to write $M_j$ as a direct sum of $\GL_n(F_v)$-modules, each belonging to a single block.
    Let $N \subset M_j$ be a summand from a non-unipotent block. We note that both $H^i(\GL_n(\OO_{F_v}), N)$ and
    $H^i(\p_{\nu_v}, N)$ inject into $H^i(T^p(k), N^{\Iw^p})$. Since $N$ is a from a non-unipotent block we know that 
    $N^{\Iw^p} = 0$, and so \[H^i(\GL_n(\OO_{F_v}), N) = H^i(\p_{\nu_v}, N) = 0\]
    Thus we can restrict to the summand $M_j^1 \subset M_j$ from the unipotent block, and it is enough to prove that 
    \[\iota^* : H^i(\GL_n(\OO_{F_v}), M_j^1) \to e_{\alpha_v}H^i(\p_{\nu_v}, M_j^1)\]
    is an isomorphism. By \cite[Theorem~B.1]{CHT} the unipotent block in our case consists of representations generated by their $\Iw^p$-invariant vectors. 
    Therefore every irreducible subrepresentation $\pi \subset M_j^1$ has a $\Iw^p$-invariant vector. It follows from the argument similar to the proof of \cref{prop:embeds}
    that \[\pi \subset \ind_B^{GL_n} \chi_1\otimes \ldots \otimes \chi_n\] 
    where $\chi_i$ are tamely ramified characters whose restriction to $\OO_{F_v}/(1+\varpi\OO_{F_v})$ has $p$-power order. 
    But these characters are valued in $k^\times$ which has order coprime to $p$, which means $\chi_i$ are in fact unramified.
    
    We can now select a smallest number $d > 0$ such that $\pi$ embeds into $M_j[\m^d]$. Since $\pi$ is irreducible
    it must then embed into $M_j[\m^d]/M_j[\m^{d-1}]$, and local-global compatibility for Iwahori level (\cite[Theorem~3.1.1]{10author}) then implies that\\
    $\{\chi_i(\varpi)\}_{i = 1, \ldots, n}$ is the set of eigenvalues of $\ov{\rho}_{\m}(\Frob_v)$. Thus we have shown that 
    $M_j \in \CC$, and we are done by \cref{thm:c_iso}.
\end{proof}

\begin{thm}\label{thm:main_g}
    There exists an ideal $I \subset \TT^{S'}_Q(K_0(Q)/K_1(Q), \lambda)_{\m_Q}$ of nilpotence degree depending only on $n$ and $[F:\Q]$, together with a continuous homomorphism 
    \[\rho_{\m, Q} : G_{F, S \cup Q} \to \GL_n(\TT^{S'}_Q(K_0(Q)/K_1(Q), \lambda)_{\m_Q}/I)\]
    lifting $\ov{\rho}_{\m}$ and satisfying the following conditions:
    \begin{enumerate}
        \item For a finite place $v \notin S \cup Q$ of $F$, $\det(X - \rho_{\m, Q}(\Frob_v))$ equals to the image of $P_v(X)$ in $\TT^{S'}_Q(K_0(Q)/K_1(Q), \lambda)_{\m_Q}/I[X]$. 
        \item For $v \in Q$, $\rho_{\m, Q}|_{G_{F_{v^c}}}$ is unramified and $\rho_{\m, Q}|_{G_{F_{v}}}$ is a lifting of type $\D_v$, and the induced map $\OO[\Delta_Q] \to \TT^{S'}_Q(K_0(Q)/K_1(Q), \lambda)_{\m_Q}/I$ is a homomorphism of $\OO[\Delta_Q]$-algebras. 
    \end{enumerate}
\end{thm}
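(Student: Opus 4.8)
The plan is to run the construction underlying \cref{thm:gtil_main} with the coefficient system $V_\lambda$ in place of the constant coefficients $\OO$, and then to identify the resulting Hecke algebra with $\TT^{S'}_Q(K_0(Q)/K_1(Q), \lambda)_{\m_Q}$ by means of \cref{thm:cusp_growth}. First I would fix the geometry: choose a good subgroup $\til K \subset \til G(\A_{F^+}^\infty)$, decomposed with respect to $P$ and hyperspecial outside $\ov{S'}$, whose image in $G(\A_{F^+}^\infty) = \GL_n(\A_F^\infty)$ is $K$, so that the subgroups $\til K_1(Q) \subset \til K_0(Q) \subset \til K$ attached to the Taylor--Wiles data map onto $K_1(Q) \subset K_0(Q) \subset K$; and a dominant weight $\til\lambda$ for $\til G$ restricting to $\lambda$ along the identification of $G$ with the Levi of $P$, so that $V_{\til\lambda}$ restricts to $V_\lambda$ along this Levi. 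The role of \cref{thm:cusp_growth} is then twofold: its first isomorphism exhibits $\ov\rho_\m$ as the residual representation attached to $\m_Q$, and its second isomorphism exhibits $R\Gamma(X_{K_1(Q)}, V_\lambda)_{\m_Q}$, with its deck action of $\Delta_Q = K_0(Q)/K_1(Q) \cong \prod_{v\in Q} k_v^\times(p)$, as a resolution of $R\Gamma(X_{K_0(Q)}, V_\lambda)_{\m_Q} \cong R\Gamma(X_K, V_\lambda)_\m$; since every operator of $\TT^{S'}_Q$ commutes with this deck action, $\TT^{S'}_Q(K_0(Q)/K_1(Q), \lambda)_{\m_Q}$ is the image of $\TT^{S'}_Q$ in $\End_\OO(H^*(X_{K_1(Q)}, V_\lambda)_{\m_Q})$, generated by the unramified Satake operators away from $S \cup Q$ together with the parahoric operators $V^{i,j}$ at $v \in Q$ coming from the Satake--Bernstein presentation for $\p_{\nu_v,1}$.

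With this in place, I would construct $\rho_{\m,Q}$ by repeating the arguments of \cref{thm:det_rel_c}, \cref{thm:det_rel_nc} and \cref{thm:gtil_main} with $V_{\til\lambda}$ in place of $\OO$ throughout. This is legitimate because the Galois representations attached to cuspidal cohomology classes for $\til G$ by \cite{Scholze15} exist for an arbitrary algebraic coefficient system, and the local input of \cref{sec:local} — in particular \cref{prop:relation_1_rep} and \cref{prop:relation_1_rep_vee} — is insensitive to it. The outcome is a $2n$-dimensional group determinant of $G_{F, S\cup Q}$ valued in the Hecke algebra above modulo a nilpotent ideal $I$ whose degree depends only on $n$ and $[F:\Q]$. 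Since $\m$ is non-Eisenstein, $\ov\rho_\m$ is absolutely irreducible, so the $n$-dimensional factor of this determinant is cut out by a genuine continuous lift $\rho_{\m,Q} : G_{F, S\cup Q} \to \GL_n(\TT^{S'}_Q(K_0(Q)/K_1(Q), \lambda)_{\m_Q}/I)$ of $\ov\rho_\m$, which already gives (1); unramifiedness of $\rho_{\m,Q}|_{G_{F_{v^c}}}$ for $v \in Q$ follows exactly as in \cref{thm:gtil_main} from (1) together with absolute irreducibility.

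The local analysis at a place $v \in Q$ then proceeds as in \cref{thm:gtil_main}. Descending the trace relation of \cref{prop:relation_1_rep} along the Satake transform $\mathcal{S}_Q^f$ and upgrading it to a matrix identity by \cref{lem:Burnside_2} and Nakayama's lemma — using $\ov\rho_\m \not\cong \ov\rho_\m^\perp$, which holds for the Taylor--Wiles primes in play because $\res_{\mu_v} \notin \til\m_Q$ — one obtains, for all $\tau \in I_{F_v}$,
\[\rho_{\m,Q}(\tau) = \res_{\nu_v}^{-1}E_{\nu_v,1}(\rho_{\m,Q}(\varphi_v)) + \langle\Art_{F_v}^{-1}(\tau)\rangle\,\res_{\mu_v}^{-1}E_{\nu_v,2}(\rho_{\m,Q}(\varphi_v)),\]
where $\res_{\nu_v}$ is automatically a unit modulo $\m_Q$ — it reduces to $\prod_{\beta \neq \alpha_v}(\alpha_v - \beta)^{d_v}$, the product running over the remaining eigenvalues of $\ov\rho_\m(\Frob_v)$ — and $\res_{\mu_v}$ is a unit by the same choice of primes. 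This presents $\rho_{\m,Q}|_{G_{F_v}}$ as the unique $\varphi_v$-stable lift of the decomposition $\ov\rho_\m = \ov{r}_1 \oplus \ov{r}_2$ of $\ov\rho_\m(\Frob_v)$ into generalized eigenspaces away from, respectively at, $\alpha_v$; \cref{lem:lift} then splits it as $s \oplus \psi$ with $s$ unramified and $\tau \in I_{F_v}$ acting on $\psi$ by the scalar $\langle\Art_{F_v}^{-1}(\tau)\rangle$, so $\rho_{\m,Q}|_{G_{F_v}}$ is a lift of type $\D_v$. Finally, the deck action of $\delta \in \Delta_Q$ on $H^*(X_{K_1(Q)}, V_\lambda)$ is implemented by the Hecke operator $\langle\delta\rangle$ attached to $\delta$ under $\OO_{F_v}^\times \to k_v^\times(p)$, so the $\OO[\Delta_Q]$-structure on the target sends $\delta$ to $\langle\delta\rangle$; local class field theory identifies $\langle\Art_{F_v}^{-1}(\tau)\rangle$, for $\tau$ a topological generator of tame inertia, with the corresponding element of $\Delta_Q$, so the displayed relation says precisely that $\OO[\Delta_Q] \to \TT^{S'}_Q(K_0(Q)/K_1(Q), \lambda)_{\m_Q}/I$ is a homomorphism of $\OO[\Delta_Q]$-algebras, completing (2).

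I expect the main obstacle to be twofold. First, transporting the boundary-cohomology and descent machinery underlying \cref{thm:gtil_main} (and \cref{prop:descent}) to the nonconstant coefficient system $V_\lambda$ while keeping the dictionary between the $\til G$-side and $G$-side Hecke data straight. Second, and more substantively, verifying that the geometric $\Delta_Q$-action really coincides with the character $\langle\Art_{F_v}^{-1}(\cdot)\rangle$ occurring on the $\psi$-part of $\rho_{\m,Q}|_{G_{F_v}}$ — that is, full local--global compatibility at Taylor--Wiles primes with parahoric level. The computations of \cref{sec:local} carry the essential weight here, but one must check carefully that the operator written $\langle\Art_{F_v}^{-1}(\tau)\rangle$ is genuinely the deck transformation and that $\res_{\nu_v}$ and $\res_{\mu_v}$ become units after localization at $\m_Q$.
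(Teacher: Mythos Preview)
Your proposal follows the broad shape of the paper's argument, but there is one genuine gap and one genuine difference in route.

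\textbf{The gap.} You assert that $\res_{\mu_v} \notin \til\m_Q$ ``for the Taylor--Wiles primes in play'', and use this both to invoke \cref{thm:gtil_main} and to conclude $\ov\rho_\m \not\cong \ov\rho_\m^\perp$ for the application of \cref{lem:Burnside_2}. But nothing in the hypotheses of \cref{thm:main_g} guarantees this. The partition is $\mu_v : 2n = d_v + (n-d_v) + n$, and under $\mathcal{S}_v^f$ the third block carries (the twist of) $P_{v^c}^\vee$; so $\res_{\mu_v} \bmod \til\m_Q$ vanishes whenever $\det(X - \ov\rho_\m(\Frob_v))$ and $\det(X - \ov\rho_\m(\Frob_{v^c}))$ share a root. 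The Taylor--Wiles datum preceding \cref{thm:main_g} imposes no such coprimality, and the adequacy-based construction of \cref{prop:killing_selmer} does not supply it either. The paper handles this by a twisting reduction: one chooses an auxiliary character $\psi : G_F \to \OO^\times$ of prime-to-$p$ order, unramified at $S'$, with $\det(X - (\ov\rho_\m \otimes \ov\psi)(\Frob_v))$ and $\det(X - (\ov\rho_\m \otimes \ov\psi)(\Frob_{v^c}))$ coprime for all $v \in Q$, proves the theorem for $\ov\rho_{\m_\psi}$ at the auxiliary level $K^\psi$, and then descends via the twisting isomorphism $f_\psi$ and the surjection of Hecke algebras. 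Without this step your argument does not go through for a general datum.

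\textbf{The difference in route.} You propose to rerun \cref{thm:det_rel_c}, \cref{thm:det_rel_nc} and \cref{thm:gtil_main} with a nontrivial coefficient system $V_{\til\lambda}$ chosen to restrict to $V_\lambda$ on the Levi. The paper instead makes two Hochschild--Serre reductions: first from $K$ to $\til K \cap G(\A_{F^+}^\infty)$ for a suitable $\til K$ (one does not choose $\til K$ with image exactly $K$, one reduces to that case), and then from $V_\lambda$ to the trivial coefficient system by trivializing $V_\lambda/\varpi^m$ on a deep-enough congruence subgroup at $p$. This lets one invoke \cref{thm:gtil_main} as stated, with constant coefficients, rather than reproving the entire boundary-cohomology and descent package for $V_{\til\lambda}$. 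Your approach is not wrong in principle---Scholze's construction and the local computations of \cref{sec:local} are indeed insensitive to the algebraic coefficient system---but it forces you to redo a substantial amount of work (including the analogue of \cref{prop:descent} for $V_{\til\lambda}$), and the existence of a single $\til\lambda$ with $V_{\til\lambda}|_G \cong V_\lambda$ is not as immediate as you suggest. The paper's reduction to $\lambda = 0$ is both shorter and cleaner.

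Your use of \cref{thm:cusp_growth} and the local analysis at $v \in Q$ is otherwise in line with the paper; those parts are fine once the twisting reduction is in place.
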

\begin{proof}
    We first make a few reductions. Let us show that we can reduce to the situation where $\det(X - \ov{\rho}_\m(\Frob_v))$ and $\det(X - \ov{\rho}_\m(\Frob_{v^c}))$ are coprime for each $v \in Q$. To achieve this we will use twisting. Pick an odd prime $l \neq p$, and consider a character $\psi : G_F \to \OO^\times$ of order $\ell$ such that $\det(X - (\ov{\rho}_\m \otimes \ov{\psi})(\Frob_v))$ and $\det(X - (\ov{\rho}_\m \otimes \ov{\psi})(\Frob_{v^c}))$ are coprime. Let $S_{\psi}$ denote the places of $F$ at which $\psi$ is ramified. We will further require that $S_{\psi}$ is disjoint from $S'$. Define a good subgroup $K^{\psi} \subset K$ given by $K^{\psi}_v = K_v$ at places $v$ at which $\psi$ is unramified, and $K^{\psi}_v = \ker(\GL_n(\OO_{F_v}) \to k(v)^\times/(k(v)^\times)^l)$ at places $v$ where $\psi$ is ramified. Following the discussion above \cite[Proposition~2.2.14]{10author} we have a homomorphism $f_{\psi} : \TT^{S' \cup S_\psi}(K^\psi, \lambda) \to \TT^{S' \cup S_\psi}(K^\psi, \lambda)$ given by 
    \begin{equation}\label{eq:twist}f_\psi([{K^\psi}^{S' \cup S_\psi} g {K^\psi}^{S' \cup S_\psi}]) = \psi^{-1}(\text{Art}(\det(g)))[{K^\psi}^{S' \cup S_\psi} g {K^\psi}^{S' \cup S_\psi}].\end{equation}
    We have a maximal ideal $\m_{\psi} = f_\psi(\m)$ of $\TT^{S' \cup S_\psi}(K^\psi, \lambda)$. \cite[Proposition~2.2.14]{10author} implies an isomorphism $\ov{\rho}_{\m} \otimes \ov{\psi} \cong \ov{\rho}_{\m_\psi}$. 
    Similarly to \cref{eq:twist}, we have an isomorphism 
    \[\TT^{S' \cup S_\psi}_Q(K^\psi_0(Q)/K^\psi_1(Q), \lambda)_{{\m_\psi}_Q} \cong \TT^{S' \cup S_\psi}_Q(K^\psi_0(Q)/K^\psi_1(Q), \lambda)_{{\m}_Q}\]
    where ${\m_\psi}_Q$ is the maximal ideal of $\TT^{S'\cup S_\psi}_Q$ generated by $\m_\psi$ and the kernels of the homomorphisms $\Z[\Xi_{v, 1}]^{S_{\nu_v}} \to k$ given by the coefficients of polynomials $(X - \psi(\Frob_v)\alpha_v)^{d_v}, \det(X - \ov{\rho}_{\m_\psi}(\Frob_v))/(X - \psi(\Frob_v)\alpha_v)^{d_v}$. 
    We have a surjective map of $\TT^{S' \cup S_\psi}$-algebras
    \[\TT^{S' \cup S_\psi}_Q(K^\psi_0(Q)/K^\psi_1(Q), \lambda)_{{\m}_Q} \to \TT^{S' \cup S_\psi}_Q(K_0(Q)/K_1(Q), \lambda)_{{\m}_Q}.\]
    Thus if the theorem holds for representations into $\TT^{S' \cup S_\psi}_Q(K^\psi_0(Q)/K^\psi_1(Q), \lambda)_{{\m}_Q}$ it will hold for representations into $\TT^{S' \cup S_\psi}_Q(K_0(Q)/K_1(Q), \lambda)_{{\m}_Q}$. Since there are infinitely many $\psi$ satisfying the conditions we require, we can vary them to conclude that the theorem holds for $\TT^{S'}_Q(K_0(Q)/K_1(Q), \lambda)_{{\m}_Q}$, which is our target Hecke algebra. 

    Let $\til{K} \subset \til{G}(\A_{F^+}^\infty)$ be a good subgroup satisfying the following conditions:
    \begin{enumerate}
        \item $K$ is decomposed with respect to $P$.
        \item if $\ov{v}$ is a finite place of $F^+$ such that $\ov{v} \notin \ov{S}$, then $\til{K}_{\ov{v}} = \til{G}(\OO_{F^+_{\ov{v}}})$.
    \end{enumerate}
    We can use the Hochschild-Serre spectral sequence to reduce to the case where $K = \til{K} \cap G(\A_{F^+}^\infty)$. We can further reduce our theorem to the case $\lambda = 0$, by a standard use of the Hochschild-Serre spectral sequence to trivialize the weight modulo some power $m$ at the expense of shrinking the level at $p$.
    Now the theorem follows from \cref{thm:gtil_main}.
\end{proof}

\section{Proof of \cref{thm:main1} and \cref{thm:main2}}

Let us recall the proof structure of \cite[Theorem~6.1.1]{10author}. The theorem is reduced in \cite{10author} to \cite[Corollary~6.5.5]{10author}, which is prove using \cite[Theorem~6.5.4]{10author}. The reduction does not use the `enormous' assumption on the image of $\ov{\rho}$. Thus it will be sufficient for us to prove an analog of  \cite[Theorem~6.5.4]{10author}, replacing `enormous' by `adequate' in the hypotheses.

Let $F$ be an imaginary CM number field, and fix the following data:
\begin{enumerate}
\item An integer $n > 2$ and a prime $p > n^2$.
\item A finite set $S$ of finite places of $F$, including the places above $p$.
\item A (possibly empty) subset $R \subset S$ of places which are prime to $p$.
\item A cuspidal automorphic representation $\pi$ of $\GL_n(\A_F)$, which is regular algebraic of some weight $\lambda$.
\item A choice of isomorphism $\iota : \ov{\Q}_p \cong \C$.

We assume that the following conditions are satisfied:\\

\item If $l$ is a prime lying below an element of $S$, or which is ramified in $F$, then $F$ contains an imaginary quadratic field in which $l$ splits. In particular, each place of $S$ is split over $F^+$ and the extension $F/F^+$ is everywhere unramified. 
\item The prime $p$ is unramified in $F$. 
\item For each embedding $\tau : F \xhookrightarrow{} \C$, we have 
\[\lambda_{\tau,1} + \lambda_{\tau c,1} - \lambda_{\tau,n} - \lambda_{\tau c,n} < p-2n.\]
\item For each $v \in S_p$, let $\ov{v}$ denote the place of $F^+$ lying below $v$. Then there exists a place $\ov{v}' \neq \ov{v}$ of $F^+$ such that $\ov{v}' \mid p$ and 
\[\sum \limits_{\ov{v}'' \neq \ov{v}, \ov{v}'} [F_{\ov{v}''}^+ : \Q_p] > \frac{1}{2}[F^+ : \Q].\]
\item The residual representation $\ov{r_\iota(\pi)}$ is absolutely irreducible.
\item If $v$ is a place of $F$ lying above $p$, then $\pi_v$ is unramified. 
\item if $v \in R$, then $\pi_v^{\Iw_v} \neq 0$.
\item If $v \in S - (R \cup S_p)$, then $\pi_v$ is unramified and $H^2(F_v, \ad \ov{r_\iota(\pi)}) = 0$.\\
Moreover, $v$ is absolutely unramified and of residue characteristic $q > 2$.
\item $S - (R \cup S_p)$ is nonempty.
\item If $v \notin S$ is a finite place of $F$, then $\pi_v$ is unramified. 
\item If $v \int R$, then $q_v \equiv 1 \pmod p$ and $\ov{r_\iota(\pi)}|_{G_{F_v}}$ is trivial. 
\item The representation $\ov{r_\iota(\pi)}$ is decomposed generic in the sense of \cite[Definition~4.3.1]{10author} and the image of $\ov{r_\iota(\pi)}|_{G_{F(\zeta_p)}}$ is adequate.
\end{enumerate}

We define an open compact subgroup $K = \prod_v K_v$ of $\GL_n(\widehat{\OO}_F)$ as follows:
\begin{itemize}
\item If $v \notin S$, or $v \in S_p$, then $K_v = \GL_n(\OO_{F_v})$. 
\item If $v \in R$, then $K_v = \Iw_v$.
\item If $v \in S - (R \cup S_p)$, then $K_v$ is the principal congruence subgroup of $\GL_n(\OO_{F_v})$.
\end{itemize}

By \cite[Theorem~2.4.9]{10author}, we can find a coefficient field $E \subset \ov{\Q}_p$ and a maximal ideal $\m \subset \TT^S(K, \V_\lambda)$ such that $\ov{\rho}_{\m} \cong \ov{r_{\iota}(\pi)}$. After possibly enlarging $E$, we can and do assume that the residue field of $\m$ is equal to $k$. For each tuple $(\chi_{v,i})_{v \in R, i=1,\ldots,n}$ of characters $\chi_{v,i} : k(v)^\times \to \OO^\times$ which are trivial modulo $\varpi$, we define a global deformation problem by the formula 
\[S_\chi = (\ov{\rho}_\m, S, \{\OO\}_{v \in S}, \{\mathcal{D}_v^{\text{FL}}\}_{v \in S_p} \cup \{\mathcal{D}_v^\chi\}_{v \in R}) \cup \{\mathcal{D}_v^\square\}_{v \in S - (R \cup S_p)}).\]
We fix representatives $\rho_{S_\chi}$ of the universal deformations which are identified modulo $\varpi$ via the identifications $R_{S_\chi}/\varpi \cong R_{S_1}/\varpi$. We define an $\OO[K_S]$-module $\V_\lambda(\chi^{-1}) = \V_\lambda \otimes_{\OO} \OO(\chi^{-1})$, where $K_S$ acts on $V_\lambda$ by projection to $K_p$ and on $\OO(\chi^{-1})$ by the projection $K_S \to K_R = \prod_{v \in R} \Iw_v \to \prod_{v \in R} (k(v)^\times)^n$.

\begin{thm}\label{thm:nearly_faithful}
    Under assumptions (1)-(17) above, $H^*(X_K, \V_\lambda(1))_{\m}$ is a nearly faithful $R_{S_1}$-module. In other words, $\Ann_{R_{S_1}}(H^*(X_K, \V_\lambda(1))_{\m})$ is nilpotent. 
\end{thm}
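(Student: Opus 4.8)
The plan is to run the Taylor--Wiles--Kisin patching argument in the Calegari--Geraghty framework, exactly as in the proof of \cite[Theorem~6.5.4]{10author}, feeding in the parahoric-level analogues of the three ingredients that in \emph{loc.\ cit.}\ relied on Iwahori level: the Taylor--Wiles datum of \cref{prop:killing_selmer} (which uses adequacy, rather than enormousness, to kill the dual Selmer group), the local--global compatibility at Taylor--Wiles places of \cref{thm:main_g}, and the cohomology-growth statement of \cref{thm:cusp_growth}. As in \cite{10author}, the argument has two stages: first one establishes faithfulness over a deformation ring $R_{S_\chi}$ for a sufficiently generic auxiliary character tuple $\chi$, where all the relevant local lifting rings are formally smooth, and then one transfers to $\chi = 1$ (the ring $R_{S_1}$ of the statement) by Taylor's Ihara-avoidance argument, using the identification $R_{S_\chi}/\varpi \cong R_{S_1}/\varpi$ already fixed above.

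For the first stage, fix such a $\chi$. For each $N \geq 1$ apply \cref{prop:killing_selmer} to the deformation problem $S_\chi$ --- its hypotheses holding by conditions (6) and (17) (after the standard base change reducing to the case $F = F^+ F_0$) --- to produce a Taylor--Wiles datum $(Q_N,(\alpha_v)_{v \in Q_N})$ of level $N$ with $|Q_N| = q$, $v^c \notin Q_N$, and a surjection $R^{\mathrm{loc}}_{S_\chi}[[X_1,\dots,X_g]] \twoheadrightarrow R_{S_{\chi,Q_N}}$ with $g = q - n^2[F^+:\Q]$. The complex $C^\chi_N := R\Gamma(X_{K_1(Q_N)}, \V_\lambda(\chi^{-1}))_{\m_{Q_N}}$ is a perfect complex of $\OO[\Delta_{Q_N}]$-modules, and \cref{thm:cusp_growth} (applied with coefficients $\V_\lambda(\chi^{-1})$, whose twist relative to $\V_\lambda$ is supported away from $Q_N$) gives $C^\chi_N \otimes^{\mathbf{L}}_{\OO[\Delta_{Q_N}]} \OO \simeq R\Gamma(X_K, \V_\lambda(\chi^{-1}))_\m$, while \cref{thm:main_g} makes $H^*(C^\chi_N)$ a module over $R_{S_{\chi,Q_N}}$ through a nilpotent ideal of bounded nilpotence degree, compatibly with the diamond operators $\OO[\Delta_{Q_N}] \cong \OO[K_0(Q_N)/K_1(Q_N)]$. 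Patching as in \cite[\S 6.4]{10author} then produces a perfect complex $C^\chi_\infty$ of $S_\infty := \OO[[y_1,\dots,y_q]]$-modules carrying a commuting action of $R^\chi_\infty := R^{\mathrm{loc}}_{S_\chi}[[X_1,\dots,X_g]]$, a local map $S_\infty \to R^\chi_\infty$, and a surjection $R^\chi_\infty \twoheadrightarrow R_{S_\chi}$, with $C^\chi_\infty \otimes^{\mathbf{L}}_{S_\infty}\OO \simeq R\Gamma(X_K, \V_\lambda(\chi^{-1}))_\m$ compatibly with the Galois actions.

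Now the numerology: the cohomology of $R\Gamma(X_K, \V_\lambda(\chi^{-1}))_\m$ is concentrated in an interval of $l_0 + 1$ degrees $[q_0, q_0 + l_0]$, the amplitude of the tempered cohomology of $X_K$ for $\GL_n/F$; since $C^\chi_\infty$ is a perfect complex over the regular local ring $S_\infty$, the commutative-algebra lemmas of \cite[\S 6.4]{10author} give $\operatorname{depth}_{S_\infty} H^*(C^\chi_\infty) \geq \dim S_\infty - l_0$. On the other hand, the local lifting rings $\mathcal{D}_v^{\mathrm{FL}}$ ($v \in S_p$, by Fontaine--Laffaille theory and $p$ unramified), $\mathcal{D}_v^\square$ ($v \in S \setminus (R \cup S_p)$, using $H^2(F_v, \ad\ov{\rho}_\m) = 0$ from (14)) and $\mathcal{D}_v^\chi$ ($v \in R$, for $\chi$ generic) are all formally smooth over $\OO$, so $R^\chi_\infty$ is itself a power series ring over $\OO$, and a dimension count with the expected local dimensions and the choice $g = q - n^2[F^+:\Q]$ gives $\dim R^\chi_\infty = \dim S_\infty - l_0$. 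Since depth is intrinsic to the module, $\operatorname{depth}_{R^\chi_\infty} H^*(C^\chi_\infty) \geq \dim R^\chi_\infty$, so every associated prime of the nonzero module $H^*(C^\chi_\infty)$ over the domain $R^\chi_\infty$ is $(0)$, i.e.\ $H^*(C^\chi_\infty)$ is a faithful $R^\chi_\infty$-module; reducing modulo $(y_1,\dots,y_q)$ and using $R^\chi_\infty \twoheadrightarrow R_{S_\chi}$ shows $H^*(X_K, \V_\lambda(\chi^{-1}))_\m$ is a nearly faithful $R_{S_\chi}$-module. For the second stage one runs the patching for $\chi = 1$ and a fixed generic $\chi$ over the same sets $Q_N$: since $\chi \equiv 1 \pmod \varpi$ the complexes $C^1_N$ and $C^\chi_N$ agree modulo $\varpi$, hence $C^1_\infty$ and $C^\chi_\infty$ have the same reduction compatibly with $R^1_\infty/\varpi \cong R^\chi_\infty/\varpi$, and Taylor's Ihara-avoidance argument (\cite[\S 6.5]{10author}) upgrades faithfulness over the regular domain $R^\chi_\infty$ to the nilpotence of $\Ann_{R^1_\infty} H^*(C^1_\infty)$; reducing modulo $(y_1,\dots,y_q)$ gives that $\Ann_{R_{S_1}} H^*(X_K, \V_\lambda(1))_\m$ is nilpotent.

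I expect the main obstacle to lie not in this argument --- which is formally that of \cite[Theorem~6.5.4]{10author} --- but in the inputs \cref{thm:main_g} and \cref{thm:cusp_growth} and, underneath them, the representation theory of \cref{sec:reps} and \cref{sec:local}: it is there that the passage from enormous to adequate image, and from Iwahori to parahoric level, is actually carried out. Within the present proof, the one point demanding genuine care is the identification of the diamond-operator action of $\OO[\Delta_{Q_N}]$ on $C^\chi_N$ with the restriction to inertia of the universal deformation at the Taylor--Wiles places: one must verify that, at parahoric rather than Iwahori level, the operators $V^{i,j}$ and the character $\tau \mapsto \langle \Art_{F_v}^{-1}(\tau)\rangle$ of \cref{thm:main_g}(3) reproduce the isomorphism $\Delta_v \cong K_0(Q_v)/K_1(Q_v)$ in a way compatible with the map $S_\infty \to R^\chi_\infty$, so that the patched object is a module over $R^\chi_\infty$ compatibly with its $S_\infty$-structure.
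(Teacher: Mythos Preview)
Your proposal is correct and follows essentially the same route as the paper: set up the patching data via \cref{prop:killing_selmer}, \cref{thm:cusp_growth}, and \cref{thm:main_g} (packaged in the paper as \cref{prop:RtoT_2}), then feed these into the machinery of \cite[\S 6.4]{10author} exactly as in the proof of \cite[Theorem~6.5.4]{10author}, with the Ihara-avoidance step to pass from generic $\chi$ to $\chi=1$. One small technical slip: your $S_\infty = \OO[[y_1,\dots,y_q]]$ omits the framing variables (the paper takes $S_\infty = \mathcal{T}[[\Delta_\infty]]$ with $\mathcal{T}$ a power series ring in $n^2|S|-1$ variables), which you need for the $T$-framed comparison with $R^{loc}$ and for the dimension count to come out right.
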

The rest of the paper is devoted to the proof of \cref{thm:nearly_faithful}.

Consider the Taylor-Wiles datum $(Q, \{\alpha_v\}_{v \in Q})$ satisfying the following conditions:
\begin{itemize}
    \item For each place $v \in Q$ of residue characteristic $l$ there exists an imaginary quadratic subfield $F_0 \subset F$ such that $l$ splits in $F_0$.
    \item $Q$ and $Q^c$ are disjoint.
\end{itemize}
We have the following result, combining \cite[Proposition~6.5.3]{10author} and \cref{thm:main_g}: 
\begin{prop}\label{prop:RtoT_2}
    There exists an integer $\delta \geq 1$ depending only on $n$ and $[F:\Q]$, an ideal $J \subset \TT^{S'}_Q(R\Gamma(X_{K_1(Q)}, V_\lambda(\chi^{-1}))_{\m_Q})$ such that $J^\delta = 0$ and a continuous surjection of $\OO[\Delta_Q]$-algebras $f_{S_{\chi, Q}} : R_{\chi, Q} \to \TT^{S'}_Q(R\Gamma(X_{K_1(Q)}, V_\lambda(\chi^{-1}))_{\m_Q})/J$ such that for each finite place $v \notin S \cup Q$ the characteristic polynomial of $f_{S_{\chi, Q}} \circ \rho_{S_{\chi, Q}}$ equals the image of $P_v(X)$. 
\end{prop}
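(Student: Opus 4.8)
The plan is to run the argument of \cite[Proposition~6.5.3]{10author}, substituting \cref{thm:main_g} for the Iwahori-level construction of Galois representations used there; the point is that no step of \cite[Proposition~6.5.3]{10author} other than that construction uses the size of the image of $\ov{\rho}_\m$. First I would make the standard preliminary reductions from the proof of \cref{thm:main_g}: twist by an auxiliary character $\psi$ of order prime to $p$ so that $\det(X - \ov{\rho}_\m(\Frob_v))$ and $\det(X - \ov{\rho}_\m(\Frob_{v^c}))$ are coprime for every $v \in Q$, and apply a Hochschild--Serre argument to reduce to a level of the shape $\til K \cap G(\A_{F^+}^\infty)$. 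The twisted coefficient module $\V_\lambda(\chi^{-1})$ causes no extra difficulty, since it differs from $\V_\lambda$ only through the action of $K_R = \prod_{v \in R}\Iw_v$ and the places of $R$ are disjoint from $S_p$ and from $Q$. Writing $\TT \coloneqq \TT^{S'}_Q(R\Gamma(X_{K_1(Q)}, \V_\lambda(\chi^{-1}))_{\m_Q})$, \cref{thm:main_g} then furnishes an ideal $I \subset \TT$ of nilpotence degree bounded in terms of $n$ and $[F:\Q]$ only, and a continuous lift $\rho : G_{F, S\cup Q} \to \GL_n(\TT/I)$ of $\ov{\rho}_\m$, such that $\det(X - \rho(\Frob_v))$ is the image of $P_v(X)$ for every finite place $v \notin S\cup Q$, such that $\rho|_{G_{F_{v^c}}}$ is unramified and $\rho|_{G_{F_v}}$ is a lift of type $\D_v^1$ for every $v \in Q$, and such that the resulting map $\OO[\Delta_Q] \to \TT/I$ is a homomorphism of $\OO[\Delta_Q]$-algebras.

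Next I would verify that $\rho$ is a deformation of $\ov{\rho}_\m$ of type $S_{\chi,Q}$, i.e. that $\rho|_{G_{F_v}}$ has the correct local type at every $v \in S$. For $v \in S - (R\cup S_p)$ the condition $\D_v^\square$ imposes no restriction beyond being a lift. For $v \in S_p$, which lies outside $Q$, the level $K_1(Q)$ is hyperspecial at $v$, so by \cite[Theorem~2.3.3]{10author} the representation $\rho|_{G_{F_v}}$ is of type $\D_v^{\text{FL}}$, the weight bound in (8) guaranteeing that the Hodge--Tate weights lie in the Fontaine--Laffaille range; this is the argument of \cite[\S 6.5]{10author}, independent of the image hypothesis. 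For $v \in R$ the Iwahori level $\Iw_v$ together with the twist by $\OO(\chi^{-1})$ forces $\rho|_{G_{F_v}}$ to be of type $\D_v^\chi$, by the Iwahori-level local--global compatibility argument of \cite[\S 6.5]{10author} (compare \cite[Theorem~3.1.1]{10author}). Hence $\rho$ is of type $S_{\chi,Q}$, and the universal property of $R_{\chi,Q}$ yields a morphism $f_{S_{\chi,Q}} : R_{\chi,Q} \to \TT/I$, which is a homomorphism of $\OO[\Delta_Q]$-algebras by the last property recorded in the previous paragraph and along which the characteristic polynomial of $\rho_{S_{\chi,Q}}(\Frob_v)$ maps to the image of $P_v(X)$ for $v \notin S\cup Q$. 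I would then take $J = I$ and let $\delta$ be its nilpotence degree.

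It remains to prove that $f_{S_{\chi,Q}}$ is surjective. The target $\TT/I$ is topologically generated over $\OO$ by the spherical Hecke operators at the places $v \notin S'$ and by the operators coming from $\OO[\Xi_{v,1}]^{S_{\nu_v}}$ for $v \in Q$. Since $\ov{\rho}_\m$ is absolutely irreducible, $R_{\chi,Q}$ is topologically generated by the traces of $\rho_{S_{\chi,Q}}$ at Frobenius elements over places $v \notin S\cup Q$, and the characteristic polynomial relations put all of these in the image of $f_{S_{\chi,Q}}$; by the Chebotarev density theorem this disposes of the operators away from $S'$. For $v \in Q$, the $S_{\nu_v}$-invariants of the $\Z^n$-factor of $\Xi_{v,1}$ map to the coefficients of the two mutually coprime factors of $\det(X - \rho(\Frob_v))$ singled out by the eigenvalue $\alpha_v$, which again lie in the image of $f_{S_{\chi,Q}}$ because the type-$\D_v^1$ universal lift at $v$ splits accordingly, while the $\langle\tau_v\rangle$-factor maps into the image of $\OO[\Delta_Q]$ and is matched by the $\OO[\Delta_Q]$-algebra structure. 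This gives surjectivity. I expect the one genuinely new ingredient --- producing, in the parahoric setting, a lift of $\ov{\rho}_\m$ into the level-$K_1(Q)$ Hecke algebra whose restriction to $G_{F_v}$ is literally of type $\D_v^1$ and compatible with the diamond operators --- to be the main obstacle, and it is precisely what \cref{thm:main_g} supplies; granting that, the remainder of the argument is the bookkeeping of \cite[Proposition~6.5.3]{10author}.
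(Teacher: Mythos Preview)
Your proposal is correct and matches the paper's approach: the paper offers no standalone proof but simply records the proposition as the combination of \cite[Proposition~6.5.3]{10author} with \cref{thm:main_g}, and you have accurately spelled out what that combination amounts to. One small remark: the preliminary reductions (twisting by $\psi$, Hochschild--Serre) are already absorbed into the statement of \cref{thm:main_g}, so you can invoke that theorem directly rather than rerunning them; otherwise your outline of checking the local types at $S_p$, $R$, and $S-(R\cup S_p)$ via the relevant results of \cite{10author}, and then appealing to the universal property together with the surjectivity argument, is exactly what is intended.
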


Let \[q = h^1(F_S/F, \ad \ov{\rho}_\m(1)) \quad \text{and} \quad g = q - n^2[F^+:\Q],\] 
and set $\Delta_\infty = \Z_p^{q}$. Let $\mathcal{T}$ be a power series ring over $\OO$ in $n^2\lvert S \rvert - 1$ variables, and let $S_\infty = \mathcal{T}[[\Delta_\infty]]$. Let $\mathfrak{a}_\infty$ be the augmentation ideal of $S_\infty$ viewed as an augmented $\OO$-algebra. Since $p > n$, for each $v \in R$ we can choose a tuple of pairwise distinct characters $\chi_v = (\chi_{v,1}, \ldots, \chi_{v,n})$, with $\chi_{v,i} : \OO_{F_v}^\times \to \OO^\times$ trivial modulo $\varpi$. We write $\chi$ for the tuple $(\chi_v)_{v \in R}$ as well as for the induced character $\prod_{v \in R}I_v \to \OO^\times$. Fix an imaginary quadratic subfield $F_0 \subset F$. Then for each $N \geq 1$,  we fix a choice of  Taylor-Wiles datum $(Q, \{\alpha_v\}_{v \in Q})$ for $\mathcal{S}_1$ of level $N$ using \cref{prop:killing_selmer}. For $N = 0$ we set $Q_0 = \emptyset$. 
For each $N \geq 1$, we set $\Delta_N = \Delta_{Q_N}$, and fix a surjection $\Delta_\infty \to \Delta_N$. We let $\Delta_0$ be the trivial group, viewed as a quotient of $\Delta_\infty$. 
For each $N \geq 0$, we set $R_N = R_{\mathcal{S}_1, Q_N}$ and $R'_N = R_{\mathcal{S}_\chi, Q_N}$. Let $R^{loc} = R_{\mathcal{S}_1}^{S, loc}$ and ${R'}^{loc} = {R'}_{\mathcal{S}_\chi}^{S, loc}$ denote the local deformation rings.
We let $R_\infty$ and $R'_\infty$ be formal power series rings in $g$ variables over $R^{loc}$ and ${R'}^{loc}$, respectively. We also have canonical isomorphisms $R_N/\varpi \cong R'_N/\varpi$ and $R^{loc}/\varpi \cong {R'}^{loc}/\varpi$. Using \cite[Proposition~6.2.24]{10author} and \cite[Proposition~6.2.31]{10author}, we have local $\OO$-algebra surjections $R_\infty \to R_N$ and $R'_\infty \to R'_N$ for $N \geq 0$. We can and do assume that these are compatible with the fixed identifications modulo $\varpi$, and with the isomorphisms $R_N \otimes_{\OO[\Delta_Q]} \OO = R_0$ and $R'_N \otimes_{\OO[\Delta_Q]} \OO = R'_0$. 

Define $\mathcal{C}_0 = R\Hom_\OO(R\Gamma(X_{K}, V_\lambda(1))_\m, \OO)[-d] \in \DD(\OO)$ and $T_0 = \TT^S(\mathcal{C}_0)$. Similarly, we define $\mathcal{C}'_0 = R\Hom_\OO(R\Gamma(X_{K}, V_\lambda(\chi^{-1}))_\m$ and $T'_0 = \TT^S(\mathcal{C}'_0)$. For any $N \geq 1$, we let 
\[\mathcal{C}_N = R\Hom_\OO(R\Gamma(X_{K_1(Q)}, V_\lambda(1))_{\m_{Q_N}}, \OO)[-d],\]
and 
\[T_N = \TT^{S'}_Q(\mathcal{C}_N)\]
Similarly, we let
\[\mathcal{C}'_N = R\Hom_\OO(R\Gamma(X_{K_1(Q)}, V_\lambda(\chi^{-1}))_{\m_{Q_N}}, \OO)[-d]\]
and 
\[T'_N = \TT^{S'}_Q(\mathcal{C}'_N).\]

For any $N \geq 0$, there are canonical isomorphisms \[\mathcal{C}_N \otimes^{\mathbf{L}}_{\OO[\Delta_N]} k[\Delta_N] \cong \mathcal{C}'_N \otimes^{\mathbf{L}}_{\OO[\Delta_N]} k[\Delta_N] \]
in $\DD(k[\Delta_N])$. These yield the identification 
\[\End_{\DD(\OO)}(\mathcal{C}_N \otimes^{\mathbf{L}}_{\OO} k) \cong \End_{\DD(\OO)}(\mathcal{C}'_N \otimes^{\mathbf{L}}_{\OO} k).\]
Thus we can write $\ov{T}_N$ for the image of both $T_N$ and $T'_N$ in the identified endomorphism algebras. By \cref{thm:cusp_growth}, there are canonical isomorphisms $\mathcal{C}_N \otimes^{\mathbf{L}}_{\OO[\Delta_N]} \OO \cong \mathcal{C}_0$ and $\mathcal{C}'_N \otimes^{\mathbf{L}}_{\OO[\Delta_N]} \OO \cong \mathcal{C}'_0$ in $\DD(\OO)$, which are compatible with the reductions modulo $\varpi$. By \cref{prop:RtoT_2} we can find and integer $\delta \geq 1$ and for each $N \geq 0$ ideals $I_N$ of $T_N$ and $I'_N$ of $T'_N$ of nilpotence degree $\leq \delta$ such that there exist local $\OO[\Delta_N]$-algebra surjections $R_N \to T_N/I_N$ and $R'_N \to T'_N/I'_N$. Denoting by $\ov{I}_N$ and $\ov{I}'_N$ the images of $I_N$ and $I'_N$ respectively in $\ov{T}_N$, we get maps $R_N/\varpi \to \ov{T}_N/(\ov{I}_N + \ov{I}'_N)$ and $R'_N/\varpi \to \ov{T}_N/(\ov{I}_N + \ov{I}'_N)$ which are compatible with the identification $R_N/\varpi \cong R'_N/\varpi$. 

The objects constructed above satisfy the setup described in \cite[\S 6.4.1]{10author}. Thus we can apply the results of \cite[\S 6.4.2]{10author} as in the second part of the proof of \cite[Theorem~6.4.4]{10author} to conclude that $H^*(C_0)$ is a nearly faithful $R_{\mathcal{S}_1}$-module, which implies \cref{thm:nearly_faithful}.

\bibliographystyle{amsalpha}
\bibliography{refs}

\end{document}